\theoremstyle{plain}
\newtheorem{theorem}{Theorem}[subsection]
\newtheorem{corollary}[theorem]{Corollary}
\newtheorem{definition}[theorem]{Definition}
\newtheorem{example}[theorem]{Example}
\newtheorem{lemma}[theorem]{Lemma}
\newtheorem{proposition}[theorem]{Proposition}
\newtheorem{remark}[theorem]{Remark}
\newcommand{\LeftEqNo}{\let\veqno\@@leqno}
 \numberwithin{equation}  {section}
\begin{document}

\

\vspace{-2cm}

\title[Kato-Rosenblum theorem in semifinite von Neumann algebras]{Perturbations of self-adjoint operators in semifinite von Neumann algebras: Kato-Rosenblum theorem}
\author{Qihui Li}
\curraddr{School of Science, East China University of Science and Technology, Shanghai,
200237, P. R. China}
\email{qihui\_{}li@126.com}
\author{Junhao Shen}
\curraddr{Department of Mathematics \& Statistics, University of New Hampshire, Durham,
03824, US}
\email{Junhao.Shen@unh.edu}
\author{Rui Shi}
\curraddr{School of Mathematical Sciences, Dalian University of Technology, Dalian,
116024, P. R. China}
\email{ruishi@dlut.edu.cn, ruishi.math@gmail.com}
\author{Liguang Wang}
\curraddr{School of Mathematical Sciences, Qufu Normal University, Qufu, 273165, P. R. China}
\email{wangliguang0510@163.com}
\thanks{The first author was partly supported by NSFC(Grant No.11671133).}
\thanks{}
\thanks{The third author was partly supported by NSFC(Grant
No.11401071) and the Fundamental Research Funds for the Central Universities
(Grant No.DUT16RC(4)57).}
\thanks{The fourth author was partly supported by NSFC(Grant No.11371222 and No.11671133).}
\subjclass[2010]{Primary: 47C15; Secondary: 47L60,47L20}
\keywords{The generalized wave operators, The Kato-Rosenblum theorem, Norm-ideal perturbations, von Neumann algebras}
\begin{abstract}
In the paper, we prove an analogue of the Kato-Rosenblum theorem in  a semifinite von Neumann algebra. Let $\mathcal{M}$ be a countably decomposable, properly infinite, semifinite von Neumann algebra acting  on a Hilbert space $\mathcal{H}$ and let $\tau$ be a faithful normal semifinite tracial weight of $\mathcal M$. Suppose that $H$ and $H_1$ are self-adjoint operators affiliated with $\mathcal{M}$. We show that
if $H-H_1$ is in $\mathcal{M}\cap L^{1}\left(
\mathcal{M},\tau\right)$, then the {\em norm} absolutely continuous parts of $H$ and $H_1$  are unitarily equivalent. This implies that the real part of a non-normal hyponormal operator in $\mathcal M$ is not a perturbation  by $\mathcal{M}\cap L^{1}\left(
\mathcal{M},\tau\right)$ of a diagonal operator. Meanwhile, for $n\ge 2$ and  $1\leq p<n$, by modifying Voiculescu's invariant we give   examples of   commuting
$n$-tuples of self-adjoint operators in $\mathcal{M}$ that are not
arbitrarily small perturbations of commuting diagonal operators modulo
$\mathcal{M}\cap L^{p}\left(  \mathcal{M},\tau\right)  $.

\end{abstract}
\maketitle




\section{Introduction}

 This   paper is a continuation of the investigation, which we began in
 \cite{Li}, of diagonalizations of self-adjoint operators  modulo
 norm ideals in   semifinite von Neumann algebras.

Let $\mathcal{H}_0$ be a complex separable infinite dimensional Hilbert space. Denote by
$\mathcal{B}\left(  \mathcal{H}_0\right)  $ the set of bounded linear operators
on $\mathcal{H}_0$.
The Weyl-von Neummann theorem  \cite{Weyl,Von2} states that a self-adjoint operator  in $\mathcal{B}\left(  \mathcal{H}_0\right)  $  is a
sum of a diagonal operator and an arbitrarily small Hilbert-Schmidt operator. A result by Kuroda in  \cite{Kuroda} implies that  a self-adjoint operator in $\mathcal{B}\left(  \mathcal{H}_0\right)  $ is a
sum of a diagonal operator and an arbitrarily small Schatten $p$-class operator with $p>1$.  Berg and Sikonia independently showed in \cite{Berg} and \cite{Siknia} that a normal operator in $\mathcal{B}\left(  \mathcal{H}_0\right)  $ is a compact perturbation of a diagonal operator. In \cite{Voi}, Voiculescu proved a surprising result by showing that a normal operator in $\mathcal{B}\left(  \mathcal{H}_0\right)  $ is a diagonal operator plus an arbitrarily small Hilbert-Schmidt operator.
 This  result of Voiculescu has recently been generalized in  \cite{Li} to    semifinite von Neumann algebras with  separable predual. It is worth noting that   Kuroda's result in \cite{Kuroda} was also extended  to   countably decomposable, properly infinite, semifinite von Neumann algebras in \cite{Li}.

In the case of perturbations by trace class operators,  the influential Kato-Rosenblum theorem (see \cite{Kato} and \cite{Rosenblum})  provides an obstruction to diagonalizations, modulo the trace class, of self-adjoint operators in $\mathcal B(\mathcal H_0)$. More specifically, if $H$ and $H_1$ are densely defined self-adjoint operators on $\mathcal H_0$ such that $H-H_1$ is in the trace class, then the Kato-Rosenblum theorem asserts that the absolutely continuous parts of $H$ and $H_1$ are unitarily equivalent.  
Thus, if a self-adjoint operator $H$ in $\mathcal B(\mathcal H_0)$
has a nonzero absolutely continuous spectrum, then $H$ can not be a
sum of a diagonal operator and a trace class operator.

The purpose of this paper is to provide a version of the
Kato-Rosenblum theorem in a   semifinite von Neumann algebra.   (For
general knowledge about von Neumann algebras, the reader is referred
to \cite{Dixmier,Kadison}.) Let $\mathcal H$ be a complex infinite
dimensional Hilbert space and let $\mathcal M\subseteq \mathcal
B(\mathcal H)$ be a countably decomposable, properly infinite von
Neumann algebra  with a faithful normal tracial weight $\tau$. A
quick example (see Example \ref{example2.4.2}) shows the existence
of a self-adjoint operator $A$ in $\mathcal M$ satisfying that $A$
has a nonzero absolutely continuous spectrum and $A$ is also  a sum
of a diagonal operator and an arbitrarily small  operator in
$\mathcal M\cap L^1(\mathcal M,\tau)$. Thus we should not expect
that  a direct generalization of  the Kato-Rosenblum theorem still holds
in a general semifinite von Neumann algebra $\mathcal M$.

Before   stating the results of the paper, we recall   the following notation. Let $(\mathscr X,\Vert\cdot\Vert)$ be a Banach space. A mapping
$f:\mathbb{R}\rightarrow\mathscr X$ is \emph{locally absolutely continuous}
if, for all $a,b\in\mathbb{R}$ with $a<b$ and every $\epsilon>0$, there exists
a $\delta>0$ such that $\sum_{i}\Vert f({b_{i}})-f({a_{i}})\Vert<\epsilon$ for
every finite collection $\{(a_{i},b_{i})\}$ of disjoint intervals in $[a,b]$
with $\sum_{i}(b_{i}-a_{i})<\delta.$

In this paper, we introduce a notion of {\em norm absolutely continuous projections} with respect to a self-adjoint operator $H$  affiliated with $\mathcal M$. Suppose $\{E(\lambda)\}_{\lambda\in\mathbb R}$ is the spectral resolution of the identity for $H$ in $\mathcal M$. A projection $P$ in $\mathcal M$ is called a norm absolutely continuous projection   with respect to  $H$ if the mapping $\lambda \mapsto PE(\lambda)P$ from $\mathbb R$ into $\mathcal M$ is locally absolutely continuous (see Definition \ref{def5.1.1}). It is shown in Proposition \ref{example5.1.3} that, in the case of $\mathcal M=\mathcal B(\mathcal H)$, if $x\in\mathcal H$ and $x\otimes x$ is the rank one projection associated with $x$, then $x\otimes x$ is a norm absolutely continuous projection  with respect to  $H$ if and only if the vector $x$ is  absolutely continuous with respect to $H$.

 For a self-adjoint operator $H$  affiliated with $\mathcal M$, we define the {\em norm absolutely continuous support} $P_{ac}^\infty(H)$ of $H$ to be the union of these norm absolutely continuous projections  with respect to  $H$ (see Definition \ref{def5.2.1}). When $H\in\mathcal M$ is
bounded, the following  criterion gives a characterization of
$P_{ac}^\infty(H)$ in terms of    hyponormal operators in $\mathcal
M$.

 \vspace{0.2cm}

{C{\Small OROLLARY}} \ref{prop5.2.10}
{\em $H$ is a self-adjoint element in $\mathcal
M$ with  $P_{ac}^\infty(H)\ne 0$ if and only if $H$ is the real part
of a non-normal hyponormal operator $T$ in $\mathcal M$.}

 \vspace{0.2cm}

 Now we are ready to state our analogue of the Kato-Rosenblum theorem for a semifinite von Neumann algebra.

 \vspace{0.2cm}

{T{\Small HEOREM}} \ref{thm5.2.2} {\em Suppose $H$ and $H_{1}$ are  self-adjoint operators affiliated with
$\mathcal{M}$ such that $H_{1} - H$ is in
$\mathcal{M}\cap L^{1}(\mathcal{M},\tau)$. Then
\[
W_{+}\triangleq sot\text{-}\lim_{t\rightarrow\infty}e^{ itH_{1}}e^{-itH}
P_{ac}^{\infty}(H) \ \ \text{ exists in } \ \mathcal{M}.
\]
Moreover,
\begin{enumerate}
\item [(i)]$\displaystyle
W_{+}^{*}W_{+}= P_{ac}^{\infty}(H) $ and $ \displaystyle W_{+} W_{+}^{*}= P_{ac}^{\infty}(H_{1});
$
\item [(ii)]
$\displaystyle
W_+   H  W_+^*=   H_1 P_{ac}^{\infty}(H_{1})  .
$
\end{enumerate}
}

 \vspace{0.2cm}

 A direct consequence of Theorem \ref{thm5.2.2} is the next result.

 \vspace{0.2cm}

 {P{\Small ROPOSITION}} \ref{prop5.3.4}
\label{cor5.2.5} {\em If $H$ is a self-adjoint element in $\mathcal{M}$ such that $P_{ac}%
^{\infty}(H)\ne0$, then there exists no self-adjoint diagonal
operator $K$ in $\mathcal{M}$ satisfying $H-K\in
L^{1}(\mathcal{M},\tau)$. In particular, if $H$ is  the real part of a
non-normal hyponormal operator   in $\mathcal M$, then there exists no self-adjoint diagonal
operator $K$ in $\mathcal{M}$ satisfying $H-K\in
L^{1}(\mathcal{M},\tau)$.
 }

 \vspace{0.2cm}

We are also able to obtain an analogue of the Kuroda-Birman theorem for a semifinite von Neumann algebra as follows.

 \vspace{0.2cm}

{T{\Small HEOREM}} \ref{thm5.3.2} {\em Suppose $H$ and $H_{1}$ are  self-adjoint operators  affiliated with
$\mathcal{M}$ such that
\[
(H_{1}+i)^{-1} - (H+i)^{-1} \in\mathcal{M}\cap L^{1}(\mathcal{M},\tau).
\]
Then
\[
W_{+}\triangleq sot\text{-}\lim_{t\rightarrow\infty}e^{ itH_{1}}e^{-itH}
P_{ac}^{\infty}(H) \ \ \text{ exists in } \ \mathcal{M}.
\]
Moreover,
\begin{enumerate}
\item [(i)]$\displaystyle
W_{+}^{*}W_{+}= P_{ac}^{\infty}(H) $ and $ \displaystyle W_{+} W_{+}^{*}= P_{ac}^{\infty}(H_{1});
$
\item [(ii)]
$\displaystyle
W_+   H  W_+^*=   H_1 P_{ac}^{\infty}(H_{1})  .
$
\end{enumerate}
}

For a commuting $n$-tuple  of self-adjoint operators in
$\mathcal{B}\left(  \mathcal{H}_0\right)  $ as $n\geq2$, the
simultaneous diagonalization theory has been extensively
investigated  in \cite{BV, Berg}, \cite{Voi2}-\cite{Voigt},
\cite{Xia1}-\cite{Xia5}. In this paper, we consider obstructions
to simultaneous diagonalization of self-adjoint operators in a
countably decomposable, proper infinite von Neumann algebra
$\mathcal M$ with a faithful normal tracial weight $\tau$. By
modifying Voiculescu's invariant, in Example \ref{example6.2.2} we
exhibit  an example   of an $n$-tuple of commuting self-adjoint
operators in   $\mathcal{M}$ that is not an arbitrarily small
perturbation of commuting diagonal operators modulo $\mathcal{M}\cap
L^{p}\left( \mathcal{M},\tau\right)  $ for all $1\leq p<n.$

The present paper has six sections. In section 2, we prepare related notation,
definitions and lemmas. We recall the concept of absolutely continuous
spectrum and give an example of a purely absolutely continuous self-adjoint
operator in a semifinite von Neumann algebras that is an arbitrarily small
$\max\{\Vert\cdot\Vert,\Vert\cdot\Vert_{1}\}$-norm perturbation of a diagonal
operator. In section 3, we introduce a smooth condition for a densely defined self-adjoint operator. Under this condition,
we are able to give the point-wise convergence of generalized wave operators. Norm
absolutely continuous projections with respect to a self-adjoint operator $H$
affiliated with  $\mathcal{M}$ are introduced in section 4. Section 5 is devoted
to show an analogue of the Kato-Rosenblum Theorem in semifinite von Neumann algebras. We also
provide  an analogue of  the Kuroda-Birman Theorem in a semifinite von Neumann algebra.
Section 6 provides examples of $n$-tuple of
self-adjoint operators in   $\mathcal{M}$ that can not be an arbitrary
small perturbations of commuting diagonal operators modulo $\mathcal{M}\cap
L^{p}\left(  \mathcal{M},\tau\right)  $ for all $1\leq p<n.$

\section{Preliminaries and Notation}

Let $\mathcal{H}$ be a complex Hilbert space and let $\mathcal{B}%
(\mathcal{H})$ be the set of all bounded linear operators on
$\mathcal{H}$.

\subsection{Semifinite von Neumann algebra}

Let $\mathcal{M}\subseteq\mathcal{B}(\mathcal{H})$ be a countably
decomposable, properly infinite von Neumann algebra with a faithful normal
semifinite tracial weight $\tau$ (see Definition 7.5.1 in \cite{Kadison} for
the details). Let
\[
\begin{aligned} \mathscr F(\mathcal M,\tau) &= \{AEB \ : \ E=E^*=E^2\in \mathcal M \text { with $\tau(E)<\infty$ and } A,B\in\mathcal M\}\\ \end{aligned}
\]
be the set of finite rank operators in $(\mathcal{M},\tau)$.

The following result is well-known. For the purpose of completeness, we
include its proof here.

\begin{lemma}
\label{lemma3.1.1} Let $\mathcal{M}\subseteq\mathcal{B}(\mathcal{H})$ be a
countably decomposable, properly infinite von Neumann algebra with a faithful
normal semifinite tracial weight $\tau$.

\begin{enumerate}
\item[(i)] There exists a sequence $\{P_{n}\}_{n\in\mathbb{N}}$ of orthogonal
projections in $\mathcal{M}$ such that $\tau(P_{n})<\infty$ for each
$n\in\mathbb{N}$ and $\sum_{n\in\mathbb N} P_{n}=I$ (convergence is
in strong operator topology).

\item[(ii)] There exists a sequence $\{x_{m}\}_{m\in\mathbb{N}}$ of vectors in
$\mathcal{H}$ such that
\[
\tau(X^*X)=\sum_{m}\langle X^*Xx_{m}, x_{m}\rangle, \qquad\forall\
X\in \mathcal{M}.
\]
  Moreover, the
linear span of the set $\{ A^{\prime}x_{m} : A^{\prime}\in\mathcal{M}^{\prime
}\text{ and } m\in\mathbb{N}\}$ is dense in $\mathcal{H}$, where
$\mathcal{M}^{\prime}$ is the commutant of $\mathcal{M}$ in $\mathcal{B}%
(\mathcal{H})$.
\end{enumerate}
\end{lemma}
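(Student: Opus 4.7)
The plan is to establish (i) by a standard Zorn's lemma argument and then leverage (i) together with the structure theory of normal positive functionals on a von Neumann algebra to build the vectors in (ii).

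For (i), I would take a maximal family $\mathcal{F}$ of pairwise orthogonal projections in $\mathcal{M}$ with finite trace (which exists by Zorn's lemma since $\tau$ is semifinite, guaranteeing at least one nonzero such projection). Countable decomposability of $\mathcal{M}$ forces $\mathcal{F}$ to be countable, say $\{P_n\}_{n\in\mathbb{N}}$. Set $P = \sum_n P_n$ (SOT convergence of an increasing net of projections). If $P \neq I$, then $I-P$ is a nonzero projection, and semifiniteness of $\tau$ provides a nonzero subprojection $Q \leq I-P$ with $\tau(Q) < \infty$; adjoining $Q$ to $\mathcal{F}$ contradicts maximality. Hence $\sum_n P_n = I$.

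For the first part of (ii), for each $n$ define $\phi_n : \mathcal{M} \to \mathbb{C}$ by $\phi_n(X) = \tau(P_n X P_n)$. Since $|\phi_n(X)| \leq \|X\|\tau(P_n) < \infty$ and $\tau$ is normal, $\phi_n$ is a normal positive linear functional on $\mathcal{M}$. By the standard representation theorem for normal positive functionals on a von Neumann algebra acting on $\mathcal{H}$, one can write $\phi_n = \sum_{k} \omega_{\xi_{n,k}}$ with vectors $\xi_{n,k} \in \mathcal{H}$, where $\omega_y(Y) = \langle Yy, y\rangle$. Enumerate the countable collection $\{\xi_{n,k}\}_{n,k\in\mathbb{N}}$ as $\{x_m\}_{m\in\mathbb{N}}$. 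To check the trace formula, for $X \in \mathcal{M}$ compute
\[
\sum_m \langle X^*X x_m, x_m\rangle = \sum_n \phi_n(X^*X) = \sum_n \tau(P_n X^* X P_n) = \sum_n \tau(X P_n X^*),
\]
using the trace property $\tau(P_n X^* \cdot X P_n) = \tau(X P_n \cdot P_n X^*)$. Since $\sum_n X P_n X^*$ is an increasing net of positive operators with SOT-limit $XX^*$, the normality of $\tau$ yields $\sum_n \tau(X P_n X^*) = \tau(X X^*) = \tau(X^* X)$, where the last equality is the tracial property of $\tau$.

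For the density statement, let $\mathcal{K}$ be the closure of $\mathrm{span}\{A'x_m : A' \in \mathcal{M}', m\in\mathbb{N}\}$; this subspace is $\mathcal{M}'$-invariant, so the projection onto $\mathcal{K}^\perp$ lies in $(\mathcal{M}')' = \mathcal{M}$, call it $Q$. Each $x_m = I \cdot x_m \in \mathcal{K}$, so $Qx_m = 0$. Suppose $Q \neq 0$; by semifiniteness, choose a subprojection $Q_0 \leq Q$ with $0 < \tau(Q_0) < \infty$. Applying the trace formula just established to $X = Q_0$ gives
\[
\tau(Q_0) = \tau(Q_0^* Q_0) = \sum_m \|Q_0 x_m\|^2 = 0,
\]
since $Q_0 x_m = Q_0 Q x_m = 0$; this contradicts $\tau(Q_0) > 0$. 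Hence $Q = 0$ and $\mathcal{K} = \mathcal{H}$.

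The only nontrivial ingredient being invoked is the decomposition of any normal positive functional on a von Neumann algebra on $\mathcal{H}$ as a countable sum of vector functionals; everything else reduces to careful bookkeeping with the tracial property, normality, and semifiniteness of $\tau$. I do not anticipate a serious obstacle: the main subtlety is ensuring the interchange of sums and applications of $\tau$, which is justified at each step by the monotone convergence built into normality of the weight.
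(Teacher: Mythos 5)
Your proposal is correct and follows essentially the same route as the paper: Zorn's lemma plus countable decomposability for (i), and for (ii) the representation of the normal positive functionals $X\mapsto\tau(P_nXP_n)$ (which equal $X\mapsto\tau(XP_n)$ on positive elements by traciality) as countable sums of vector states, i.e.\ Theorem 7.1.12 of Kadison--Ringrose, followed by the standard commutant-invariance argument for density. The only cosmetic differences are that you verify the trace identity more explicitly via $\sum_n\tau(XP_nX^*)=\tau(XX^*)=\tau(X^*X)$, and you conclude density by contradiction through a finite-trace subprojection where the paper simply computes $\tau(I-Q)=\sum_m\langle(I-Q)x_m,x_m\rangle=0$ and invokes faithfulness.
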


\begin{proof}
From Proposition 8.5.2 in \cite{Kadison}, if $P$ is a nonzero projection in
$\mathcal{M}$, then there exists a sub-projection $P_{0}$ of $P$ such that
$0<\tau( P_{0})<\infty$. Now by Zorn's lemma, there exists a family
$\{P_{\lambda}\}_{\lambda\in\Lambda}$ of orthogonal projections in
$\mathcal{M}$ such that $\sum_{\lambda\in\Lambda} P_{\lambda}=I$ and
$\tau(P_{\lambda})<\infty$ for each $\lambda\in\Lambda$. From the fact that
$\mathcal{M}$ is countably decomposable, it follows that $\Lambda$ is
countable. This ends the proof of (i).

By (i), there exists a sequence $\{P_{n}\}_{n\in\mathbb{N}}$ of
orthogonal projections in $\mathcal{M}$ such that
$\tau(P_{n})<\infty$ for each $n\in\mathbb{N}$ and $\sum_{n}
P_{n}=I$. Thus $\tau(A)=\sum_{n} \tau(AP_{n}),$ $\forall\
A\in\mathcal{M}^{+}$, where $\mathcal M^+$ is the positive part of
$\mathcal M$. As the mapping $A\mapsto\tau(AP_{n})$ is a
normal positive functional on $\mathcal{M}$, the existence of $\{x_{m}%
\}_{m\in\mathbb{N}}$ follows from Theorem 7.1.12 in \cite{Kadison}.
Moreover, let $Q$ be the projection from $\mathcal{H}$ onto
$\mathcal{H}_{1}$, the
closure of the linear span of the set $\{ A^{\prime}x_{m} : A^{\prime}%
\in\mathcal{M}^{\prime}\text{ and } m\in\mathbb{N}\}$ in
$\mathcal{H}$. Then $Q\in\mathcal{M}$ and $(I-Q)x_{m}=0$ for all
$m\in\mathbb{N}$. Thus $\tau(I-Q)=\sum_{m}\langle(I-Q)x_{m},
x_{m}\rangle=0$. As $\tau$ is faithful, we conclude that $I-Q=0$,
whence $\mathcal{H}_{1}=\mathcal{H}$.
\end{proof}

\subsection{Noncommutative $L^{p}(\mathcal{M},\tau)$}

Here, we will briefly review the definition of noncommutative $L^{p}$-spaces
associated to a semifinite von Neumann algebra. For $1\le p<\infty$, the
mapping
\[
\|\cdot\|_{p}: \mathscr F(\mathcal{M},\tau) \rightarrow[0,\infty)
\]
is defined by
\[
\|A\|_{p}=(\tau(|A|^{p})^{1/p}, \ \ \forall\ A\in\mathscr F(\mathcal{M}%
,\tau).
\]
It is a highly non-trivial fact that $\|\cdot\|_{p}$ is a norm on
$\mathscr F(\mathcal{M},\tau)$. We let $L^{p}(\mathcal{M},\tau)$ be the
completion of $\mathscr F(\mathcal{M},\tau)$ with respect to the norm
$\|\cdot\|_{p}$ (see \cite{Pisier} for more details). When $p=\infty$, we let
$\|A\|_{\infty}=\|A\|$ for all $A\in\mathcal{M}$ and let $L^{\infty
}(\mathcal{\mathcal{M}},\tau)=\mathcal{M}$.


\subsection{Spectral theory for self-adjoint operators}

Recall a densely defined, closed operator $A$ is affiliated with $\mathcal{M}$
if $AU^{\prime}=U^{\prime}A$ for all unitary operator $U^{\prime}$ in
$\mathcal{M}^{\prime}$, where $\mathcal{M}^{\prime}$ is the commutant of
$\mathcal{M}$ in $\mathcal{B}(\mathcal{H})$. Let $\mathscr A(\mathcal{M})$ be
the set of all densely defined, closed operators that are affiliated with
$\mathcal{M}$. Note that, from Theorem 5 in \cite{Nelson}, $L^{p}%
(\mathcal{M},\tau)$ can be identified as a subset of $\mathscr A(\mathcal{M})$
for each $1\le p\le\infty$.


Let $H$ be a  self-adjoint element in $\mathscr
A(\mathcal{M})$. Then there exists a family $\{E(\lambda
)\}_{\lambda\in\mathbb{R}}$ of projections  in $\mathcal{M}$ that is the spectral
resolution of the identity for $H$ such that
$
H=\int_{-\infty}^{\infty}\lambda dE(\lambda).
$  (In fact, each
$E(\lambda)$ is a spectral projection of $H$ corresponding to the interval
$(-\infty,\lambda]$. See Theorem 5.2.6 of \cite {Kadison} for more details.)
%
If $f$ is a bounded Borel function on $\mathbb{R}$, then $f(H)$ is
an element in the von Neumann subalgebra generated by $\{E(\lambda
)\}_{\lambda\in\mathbb{R}}$ in $\mathcal{M}$, satisfying
\begin{align}
\langle f(H)x,y\rangle=\int_{-\infty}^{\infty}f(\lambda) \ d \langle
E(\lambda) x, y\rangle, \qquad\forall\ x,y\in\mathcal{H}. \label{equ3.1}%
\end{align}
In particular, for each $t\in\mathbb{R}$, we have
\begin{align}
\langle e^{-itH}x,y\rangle=\int_{-\infty}^{\infty}e^{-it\lambda}\ d \langle
E(\lambda) x, y\rangle, \qquad\forall\ x,y\in\mathcal{H}.\label{equ3.2}
\end{align}
%
%



\subsection{Absolutely continuous spectrum}

Let $H$ be a  self-adjoint element in $\mathscr A(\mathcal{M}%
)$ and let $\{E(\lambda)\}_{\lambda\in\mathbb{R}}$ be the spectral resolution
of the identity for $H$ in $\mathcal{M}$.
We let $\mathcal{H}_{ac}(H)$ be the set of all these vectors $x$ in
$\mathcal{H}$ such that the mapping $\lambda\mapsto\langle E(\lambda) x,
x\rangle$, with $\lambda\in\mathbb{R}$, is a (locally) absolutely continuous
function on $\mathbb{R}$ (see \cite{Kato4} for details of the definition). It
is known that $\mathcal{H}_{ac}(H)$ is a closed subspace of $\mathcal{H}$ (see
Theorem X.1.5 of \cite{Kato4}). Let $P_{ac}(H)$ be the projection from
$\mathcal{H}$ onto $\mathcal{H}_{ac}(H)$. Then $P_{ac}(H)$ is in the von
Neumann subalgebra generated by $\{E(\lambda)\}_{\lambda\in\mathbb{R}}$ in
$\mathcal{M}$ (see Theorem X.1.6 in \cite{Kato4}).


%
%
%
%
%

The following result can be found in the proof of Theorem X.4.4 of
\cite{Kato4}.

\begin{lemma}
\label{lemma3.4.2} Let $x\in\mathcal{H}_{ac}(H)$. If $\Delta$ is a Borel
subset of $\mathbb{R}$ and $\chi_{_{\Delta}}$ is the characteristic function
of $\Delta$, then $\chi_{_{\Delta}}(H) x\in\mathcal{H}_{ac}(H)$ and
\[
\frac{d \langle E(\lambda) \ \chi_{_{\Delta}}(H)x, x\rangle}{d\lambda}
=\chi_{_{\Delta}}(\lambda) \frac{d \langle E(\lambda) x, x\rangle}{d\lambda},
\quad \text{ for $\lambda\in\mathbb R $ a.e.}
\]

\end{lemma}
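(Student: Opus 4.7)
The plan is to reduce everything to a direct computation with the scalar spectral measure $\mu_x(S) := \langle E(S) x, x\rangle$ on Borel subsets $S\subseteq\mathbb{R}$, where $E(\cdot)$ denotes the projection-valued spectral measure of $H$, so that the family $\{E(\lambda)\}_{\lambda\in\mathbb{R}}$ of the statement corresponds to $\lambda\mapsto E((-\infty,\lambda])$.

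First I would observe that $E(\lambda)$ and $\chi_{_{\Delta}}(H)$ both lie in the abelian von Neumann subalgebra of $\mathcal{M}$ generated by the spectral projections of $H$, hence commute, and that multiplicativity of the bounded Borel functional calculus applied to $\chi_{(-\infty,\lambda]}\cdot\chi_{_{\Delta}} = \chi_{(-\infty,\lambda]\cap\Delta}$ gives
\[
E(\lambda)\,\chi_{_{\Delta}}(H) \;=\; E\bigl((-\infty,\lambda]\cap\Delta\bigr).
\]
Pairing with $x$ yields $\langle E(\lambda)\chi_{_{\Delta}}(H) x, x\rangle = \mu_x\bigl((-\infty,\lambda]\cap\Delta\bigr)$. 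Moreover, since $\chi_{_{\Delta}}(H)$ is a projection commuting with $E(\lambda)$, one has $\chi_{_{\Delta}}(H)^{2} = \chi_{_{\Delta}}(H)$, and hence
\[
\langle E(\lambda)\chi_{_{\Delta}}(H) x, \chi_{_{\Delta}}(H) x\rangle \;=\; \langle E(\lambda)\chi_{_{\Delta}}(H) x, x\rangle,
\]
so local absolute continuity of the right-hand side is equivalent to $\chi_{_{\Delta}}(H)x \in \mathcal{H}_{ac}(H)$.

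Next, the hypothesis $x\in\mathcal{H}_{ac}(H)$ is exactly the statement that $\lambda\mapsto\mu_x((-\infty,\lambda])$ is locally absolutely continuous; equivalently, the positive Borel measure $\mu_x$ is, on each bounded interval, absolutely continuous with respect to Lebesgue measure. By Radon--Nikodym, $\mu_x$ admits a locally integrable density $f(\lambda) := d\mu_x/d\lambda$, so
\[
\mu_x\bigl((-\infty,\lambda]\cap\Delta\bigr) \;=\; \int_{-\infty}^{\lambda}\chi_{_{\Delta}}(s)\,f(s)\,ds.
\]
Since $\chi_{_{\Delta}} f$ is locally integrable, the right-hand side is locally absolutely continuous in $\lambda$; this simultaneously establishes $\chi_{_{\Delta}}(H)x \in \mathcal{H}_{ac}(H)$ and, by the Lebesgue differentiation theorem, identifies its a.e.\ derivative as $\chi_{_{\Delta}}(\lambda) f(\lambda)$, which is exactly the claimed identity.

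The only even mildly delicate point is the product formula $E(\lambda)\chi_{_{\Delta}}(H) = E((-\infty,\lambda]\cap\Delta)$ for an arbitrary Borel set $\Delta$, but this is immediate from the fact that $g\mapsto g(H)$ is a $*$-homomorphism from the bounded Borel functions on $\mathbb{R}$ into $\mathcal{M}$. Beyond this, the argument is routine manipulation with Radon--Nikodym derivatives and characteristic functions, so I anticipate no substantial obstacle.
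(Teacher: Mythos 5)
Your proof is correct. The paper does not actually prove this lemma itself --- it only points to the proof of Theorem X.4.4 in Kato's book --- and your argument (passing to the scalar spectral measure $\mu_x$, using multiplicativity of the bounded Borel functional calculus to get $E(\lambda)\chi_{\Delta}(H)=E((-\infty,\lambda]\cap\Delta)$, and then applying Radon--Nikodym together with Lebesgue differentiation to the density $f=d\mu_x/d\lambda$) is exactly the standard argument underlying that reference, so there is nothing to add.
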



We end this subsection with an example of a self-adjoint operator $A$ in a
semifinite von Neumann algebra $\mathcal{M}$ such that $A$ has purely
absolutely continuous spectrum and $A$ is an arbitrarily small $\max
\{\|\cdot\|, \|\cdot\|_{1}\}$-norm perturbation of a diagonal operator.

\begin{example}\label{example2.4.2}
Let $\mathcal{N}$ be a diffuse finite von Neumann algebra with a faithful
normal tracial state $\tau_{\mathcal{N}}$ and let $\mathcal{H}_{0}$ be an
infinite dimensional separable Hilbert space. Then $\mathcal{M}=\mathcal{N}%
\otimes\mathcal{B}(\mathcal{H}_{0})$ is a semifinite von Neumann algebra with a
faithful normal tracial weight $\tau_{\mathcal{M}}=\tau_{\mathcal{N}} \otimes
Tr$, where $Tr$ is the canonical trace of $\mathcal{B}(\mathcal{H}_{0})$. We
might further assume that $\mathcal{M}$ acts naturally on the Hilbert space
$\mathcal{H}=L^{2}(\mathcal{N},\tau_{\mathcal{N}})\otimes\mathcal{H}_{0}$.

Let $\{E(\lambda)\}_{0\le\lambda\le1}$ be an increasing family of projections
in $\mathcal{N}$ such that $\tau(E(\lambda))=\lambda$ for $0\le\lambda\le1$.
Let $X=\int_{0}^{1} \lambda dE(\lambda)$ and $A=X\otimes I_{\mathcal{B}%
(\mathcal{H}_{0})}$. Notice we can identify $\mathcal{N}$ with a subset of
$L^{2}(\mathcal{N},\tau_{\mathcal{N}})$. For any unitary element $u$ in
$\mathcal{N}\subseteq L^{2}(\mathcal{N},\tau_{\mathcal{N}})$ and any unit
vector $y$ in $\mathcal{H}_{0}$, we have
\[
\langle(E(\lambda)\otimes I_{\mathcal{B}(\mathcal{H})})(u\otimes y), u\otimes
y\rangle= \tau(u^{*}E(\lambda)u)\|y\|^{2} =\lambda, \ \ \text{ for }
0\le\lambda\le1.
\]
Thus the vector $u\otimes y$ is in $\mathcal{H}_{ac}(A)$, which shows that
$\mathcal{H}_{ac}(A) = L^{2}(\mathcal{N},\tau_{\mathcal{N}})\otimes
\mathcal{H}_{0}$.

On the other hand, there exist a family $\{P_{n}\}_{n\in\mathbb{N}}$ of
orthogonal projections in $\mathcal{B}(\mathcal{H}_{0})$ such that
$Tr(P_{n})=1$ for each $n\in\mathbb{N}$ and $\sum_{n} P_{n}=I_{\mathcal{B}%
(\mathcal{H}_{0})}$. Then $A=X\otimes I_{\mathcal{B}(\mathcal{H}_{0})}=
\sum_{n} X\otimes P_{n}$. Let $\epsilon>0$ be given. For each $n\in\mathbb{N}%
$, by spectral theory, there exists a self-adjoint diagonal operator $Y_{n}%
\in\mathcal{N}$ such that $\|X-Y_{n}\|\le\epsilon/2^{n}.$ Let $Y=\sum_{n}
Y_{n}\otimes P_{n}$. Then $Y$ is a self-adjoint diagonal element in
$\mathcal{M}$ such that $\max\{\|A -Y\|, \|A -Y\|_{1} \}\le\epsilon.$

Thus there is a  self-adjoint element $A$ in $\mathcal{M}$ with
purely absolutely continuous spectrum such that $A$ is an
arbitrarily small $\max\{ \|\cdot\|, \|\cdot\|_{1} \}$-norm
perturbation of a diagonal element in $\mathcal{M}$.
\end{example}

\subsection{Identification operator}

For $H\in\mathscr A(\mathcal{M})$, we denote the domain of $H$ in
$\mathcal{H}$ by $\mathcal{D}(H)$.

\begin{lemma}
\label{lem3.5.1} Assume that $H_{1}$ and $H$ are  self-adjoint
elements in $\mathscr A(\mathcal{M})$. Let $J$ be an element in
$\mathcal{M}$ such that $J\mathcal{D}(H)\subseteq\mathcal{D}(H_{1})$
and $H_{1}J-JH$ extends to a bounded operator $B$ in $\mathcal{M}.$
Let
\[
W(t)=e^{itH_{1}}Je^{-itH}, \qquad\text{ for } t \in\mathbb{R}.
\]
Then, for all $x,y\in\mathcal{H}$ and $s,t\in\mathbb{R}, a>0$,

\begin{enumerate}
\item[(i)] the mapping $\lambda\mapsto e^{ i\lambda H_{1}} B e^{-i\lambda H}x$
from $[s,t]$ into $\mathcal{H}$ is Bochner integrable with
\[
\big( W(t)-W(s)\big)x = i \int_{s}^{t} e^{ i\lambda H_{1}} B e^{-i\lambda H}x
d\lambda,
\]

\item[(ii)]
\[
\begin{aligned}
\langle & \left (W(t)^* (W(t)-W(s))- e^{ iaH}W(t)^*(W(t)-W(s))e^{-iaH}  \right )x, y\rangle \\ &=i\int_0^a \langle e^{ i(\lambda+t) H}   \left ( B^*  J- B^* e^{-i(s-t) H_1}J -J^*   B+ J^*e^{-i(t-s)  H_1} B\right )   e^{-i(\lambda+s) H}  x, y\rangle d\lambda .\end{aligned}
\]

\end{enumerate}
\end{lemma}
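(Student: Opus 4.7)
My plan is to prove (i) by the standard product-rule differentiation of $W(t)x$ in $t$, and then derive (ii) from (i) by differentiating the conjugation $\lambda\mapsto e^{i\lambda H}Ge^{-i\lambda H}$, with $G:=W(t)^{*}(W(t)-W(s))$, and computing the resulting commutator with $H$.

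For (i), fix $x\in\mathcal{D}(H)$. Since $e^{-itH}$ leaves $\mathcal{D}(H)$ invariant and $J\mathcal{D}(H)\subseteq\mathcal{D}(H_{1})$ by hypothesis, $Je^{-itH}x\in\mathcal{D}(H_{1})$ for every $t$. Applying Stone's theorem to each factor of $W(t)x=e^{itH_{1}}(Je^{-itH}x)$ yields
\[
\frac{d}{dt}W(t)x = ie^{itH_{1}}(H_{1}J-JH)e^{-itH}x = ie^{itH_{1}}Be^{-itH}x.
\]
The integrand $\lambda\mapsto e^{i\lambda H_{1}}Be^{-i\lambda H}x$ is strongly continuous in $\lambda$ and satisfies $\|e^{i\lambda H_{1}}Be^{-i\lambda H}x\|\le\|B\|\,\|x\|$, hence is Bochner integrable on $[s,t]$; the fundamental theorem of calculus then gives the stated identity on the core $\mathcal{D}(H)$. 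Since both sides depend linearly and boundedly on $x$ (with operator norm at most $|t-s|\|B\|$), the identity extends by density to all $x\in\mathcal{H}$.

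For (ii), set $F(\lambda):=e^{i\lambda H}Ge^{-i\lambda H}$, so the left-hand side equals $\langle(F(0)-F(a))x,y\rangle = -\int_{0}^{a}\langle F'(\lambda)x,y\rangle\,d\lambda$. I will first justify, at the matrix-element level, that $F'(\lambda)=ie^{i\lambda H}[H,G]e^{-i\lambda H}$, and then compute the commutator $[H,G]$ explicitly. Using $W(t)^{*}=e^{itH}J^{*}e^{-itH_{1}}$ and the fact that $H$, $H_{1}$ commute with their own spectral exponentials,
\[
G = e^{itH}J^{*}Je^{-itH}-e^{itH}J^{*}e^{-i(t-s)H_{1}}Je^{-isH}.
\]
The adjoint form $HJ^{*}=J^{*}H_{1}-B^{*}$ of the hypothesis (valid on the appropriate domains) yields $[J^{*}J,H]=B^{*}J-J^{*}B$, and a parallel computation gives
\[
[H,J^{*}e^{-i(t-s)H_{1}}J] = J^{*}e^{-i(t-s)H_{1}}B - B^{*}e^{-i(t-s)H_{1}}J.
\]
Substituting these into $[H,G]$, distributing the outer factors $e^{itH}$ and $e^{-isH}$ through to combine with $e^{\pm i\lambda H}$, and integrating over $\lambda\in[0,a]$ reproduce the claimed formula.

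The main obstacle is the rigorous justification of $F'(\lambda)$, since $H$ is typically unbounded and $G$ need not preserve $\mathcal{D}(H)$. I will address this by localizing with the spectral projections $E_{n}=\chi_{[-n,n]}(H)\in\mathcal{M}$: on $E_{n}\mathcal{H}$ the operator $H$ acts boundedly, so the product rule applies pointwise and the computation above is literal. Since $\bigcup_{n}E_{n}\mathcal{H}$ is dense in $\mathcal{H}$, and both $F(\lambda)$ and the candidate derivative are norm-bounded uniformly in $\lambda$, the matrix-element identity extends to arbitrary $x,y\in\mathcal{H}$, and dominated convergence legitimizes the $\lambda$-integration under this extension.
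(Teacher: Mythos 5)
The paper does not actually prove this lemma; it simply cites Kato, Ch.~X, (3.21) and (5.8). Your argument for (i) is precisely the cited one (Stone's theorem and the product rule on $\mathcal{D}(H)$, then extension by density and boundedness) and is correct. For (ii) your strategy --- differentiate $F(\lambda)=e^{i\lambda H}Ge^{-i\lambda H}$ with $G=W(t)^{*}(W(t)-W(s))$ and compute $[H,G]$ --- is also the standard one, and your two commutator identities $[J^{*}J,H]=B^{*}J-J^{*}B$ and $[H,J^{*}e^{-i(t-s)H_{1}}J]=J^{*}e^{-i(t-s)H_{1}}B-B^{*}e^{-i(t-s)H_{1}}J$ are correct: they follow from $HJ^{*}u=J^{*}H_{1}u-B^{*}u$ for $u\in\mathcal{D}(H_{1})$, which is legitimate because taking adjoints in the hypothesis gives $J^{*}\mathcal{D}(H_{1})\subseteq\mathcal{D}(H)$. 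Note that this same observation gives $G\mathcal{D}(H)\subseteq\mathcal{D}(H)$, which is what you actually need; your $E_{n}=\chi_{[-n,n]}(H)$ localization does not by itself resolve the domain issue, since $G$ does not map $E_{n}\mathcal{H}$ into $E_{n}\mathcal{H}$, and the cleaner route is simply to differentiate the sesquilinear form $\langle Ge^{-i\lambda H}x,e^{-i\lambda H}y\rangle$ for $x,y\in\mathcal{D}(H)$.

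The genuine problem is your final sentence: substituting your (correct) commutators into $[H,G]$ does \emph{not} reproduce the displayed formula. What it yields is
\begin{equation*}
i\int_{0}^{a}\Bigl\langle e^{i(\lambda+t)H}\bigl(B^{*}J-J^{*}B\bigr)e^{-i(\lambda+t)H}x+e^{i(\lambda+t)H}\bigl(J^{*}e^{-i(t-s)H_{1}}B-B^{*}e^{-i(t-s)H_{1}}J\bigr)e^{-i(\lambda+s)H}x,\,y\Bigr\rangle\,d\lambda ,
\end{equation*}
so the $B^{*}J$ and $J^{*}B$ terms must carry $e^{-i(\lambda+t)H}$ on the right (equivalently, an extra factor $e^{-i(t-s)H}$ inside the bracket), and the exponent in the second term is $e^{-i(t-s)H_{1}}$, not $e^{-i(s-t)H_{1}}$. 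The identity as printed in the lemma is in fact false: in the scalar case $\mathcal{H}=\mathbb{C}$ with $H=h$, $H_{1}=h_{1}$, $J=j$, the left-hand side vanishes (everything commutes with $e^{iaH}$), while the bracket on the right-hand side equals $-2i|j|^{2}(h_{1}-h)\sin((t-s)h_{1})$, which is generically nonzero. So either you mis-distributed the outer exponentials or you asserted agreement without carrying out the bookkeeping; in either case this last step is the one to fix, and your own intermediate identities already expose the typos in the statement. (The corrected identity feeds into Proposition \ref{prop5.0.1} exactly as the printed one would, with $s$ replaced by $t$ in two of the four terms, so nothing downstream in the paper is affected.)
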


\begin{proof}
The proof can be found in Chapter X (3.21) and Chapter X (5.8) of \cite{Kato4}
(also see  \cite{Pearson}).

\end{proof}

\begin{proposition}
Assume that $H_{1}$ and $H$ are  self-adjoint elements in $\mathscr
A(\mathcal{M})$. Let $J$ be an element in $\mathcal{M}$ such that
$J\mathcal{D}(H)\subseteq\mathcal{D}(H_{1})$ and $H_{1}J-JH$ extends
to a bounded operator $B$ in $\mathcal{M}\cap
L^{1}(\mathcal{M},\tau).$ Let $W(t)=e^{itH_{1}}Je^{-itH}, \text{ for
} t \in\mathbb{R}. $ Then  for all $t, s\in \mathbb{R}$,
\[
W(t)-W(s) \in\mathcal{M}\cap L^{1}(\mathcal{M},\tau).
\]

\end{proposition}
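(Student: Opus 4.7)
The plan is to upgrade the vector-valued Bochner identity from Lemma \ref{lem3.5.1}(i),
\[
(W(t)-W(s))x = i\int_s^t F(\lambda)x\,d\lambda,\qquad F(\lambda):=e^{i\lambda H_1}Be^{-i\lambda H},
\]
to an $L^1$-bound on $W(t)-W(s)$ via the $(L^1,\mathcal{M})$ trace duality. Assume $s<t$. Since conjugation by unitaries preserves $\|\cdot\|_\infty$ and, by the trace property, also $\|\cdot\|_1$, each $F(\lambda)$ lies in $\mathcal{M}\cap L^1(\mathcal{M},\tau)$ with $\|F(\lambda)\|=\|B\|$ and $\|F(\lambda)\|_1=\|B\|_1$.

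The central step is to prove that for every test element $Y=A_1EA_2\in\mathscr{F}(\mathcal{M},\tau)$ with $\tau(E)<\infty$,
\[
\tau(Y(W(t)-W(s))) = i\int_s^t\tau(YF(\lambda))\,d\lambda.
\]
By cyclicity of the trace, this reduces to proving $\tau(EZ)=i\int_s^t\tau(EA_2F(\lambda)A_1)\,d\lambda$ for $Z:=A_2(W(t)-W(s))A_1\in\mathcal{M}$. Since bounded operators commute with $\mathcal{H}$-valued Bochner integrals, Lemma \ref{lem3.5.1}(i) gives $Zx=i\int_s^t A_2F(\lambda)A_1x\,d\lambda$ for every $x\in\mathcal{H}$. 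Riemann sums for this Bochner integral then converge to $Z$ in the strong operator topology while remaining norm-bounded, so they converge $\sigma$-weakly as well. The functional $\omega(\,\cdot\,):=\tau(E\,\cdot\,)$ is a normal linear functional on $\mathcal{M}$ (with $\|\omega\|\le\tau(E)$), hence $\sigma$-weakly continuous; moreover $\lambda\mapsto\omega(A_2F(\lambda)A_1)$ is a continuous scalar function because $\lambda\mapsto A_2F(\lambda)A_1$ is SOT-continuous by Stone's theorem and $\omega$ is WOT-continuous on norm-bounded sets. Applying $\omega$ to the Riemann sums, passing to the limit, and recognizing the result as a Riemann sum of a continuous integrand then yields the displayed identity. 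The duality bound $|\tau(YF(\lambda))|\le\|Y\|\,\|F(\lambda)\|_1=\|Y\|\,\|B\|_1$ gives
\[
|\tau(Y(W(t)-W(s)))|\le(t-s)\,\|Y\|\,\|B\|_1.
\]

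To conclude, set $X:=W(t)-W(s)\in\mathcal{M}$ and let $X=U|X|$ be its polar decomposition in $\mathcal{M}$. For any projection $E\in\mathcal{M}$ with $\tau(E)<\infty$, the element $Y:=EU^*$ lies in $\mathscr{F}(\mathcal{M},\tau)$ with $\|Y\|\le1$ and satisfies $\tau(YX)=\tau(EU^*U|X|)=\tau(E|X|)$, so $\tau(E|X|)\le(t-s)\|B\|_1$. Using partial sums of the projections from Lemma \ref{lemma3.1.1}(i) to obtain an increasing sequence $E_n\nearrow I$ of finite-trace projections, the monotone convergence $|X|^{1/2}E_n|X|^{1/2}\nearrow|X|$ combined with normality of $\tau$ gives $\tau(|X|)\le(t-s)\|B\|_1$, so $W(t)-W(s)\in\mathcal{M}\cap L^1(\mathcal{M},\tau)$. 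The main technical obstacle is the interchange of the normal functional $\omega$ with the weak integral defining $Z$; this rests on the coincidence of the $\sigma$-weak and weak operator topologies on norm-bounded subsets of $\mathcal{M}$ together with the SOT-continuity of $\lambda\mapsto A_2F(\lambda)A_1$ furnished by Stone's theorem.
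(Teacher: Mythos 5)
Your proof is correct and follows essentially the same route as the paper: both reduce the claim, via the polar decomposition and finite-trace cutoffs, to the bound $\tau(E|W(t)-W(s)|)\le (t-s)\|B\|_1$ obtained from the Bochner-integral formula of Lemma \ref{lem3.5.1}(i) together with the $L^1$--$L^\infty$ duality, and then conclude by normality of $\tau$. The only difference is the technical device for interchanging the normal functional with the integral --- you use norm-bounded Riemann sums and $\sigma$-weak continuity, whereas the paper represents $\tau(\cdot\,P_n)$ as a sum of vector states $\sum_m\langle\cdot\,y_m,y_m\rangle$ and applies dominated convergence --- and both devices work.
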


\begin{proof}
We need only to show that $W(t)-W(s) \in L^{1}(\mathcal{M},\tau). $ By Lemma
\ref{lemma3.1.1}, there exists an increasing sequence $\{P_{n}\}_{n\in
\mathbb{N}}$ of projections in $\mathcal{M}$ such that $\tau(P_{n})<\infty$
for each $n\in\mathbb{N}$ and $P_{n}\rightarrow I$ in strong operator
topology. Fix an $n\in\mathbb{N}$. As the mapping $A\mapsto\tau(AP_{n})$
defines a normal positive linear functional on $\mathcal{M}$, from Theorem
7.1.11 in \cite{Kadison}, there exists an orthogonal family $\{y_{m}%
\}_{m\in\mathbb{N}}$ of vectors in $\mathcal{H}$ such that
\[
\sum_{m}\|y_{m}\|^{2}<\infty\qquad\text{ and }\qquad\tau(AP_{n})=\sum_{m}
\langle Ay_{m}, y_{m}\rangle, \ \forall\ A\in\mathcal{M}.
\]
Let $W(t)-W(s)=W|W(t)-W(s)|$ be the polar decomposition of $W(t)-W(s)$ in
$\mathcal{M}$, where $W$ is a partial isometry and $|W(t)-W(s)|$ is a positive
operator in $\mathcal{M}$. Then, from Lemma \ref{lem3.5.1}, it induces that
\begin{align}
\tau(|W(t)-W(s)|P_{n})  &  = \sum_{m} \langle|W(t)-W(s)| y_{m}, y_{m}\rangle=
\sum_{m} \langle(W(t)-W(s)) y_{m}, Wy_{m}\rangle\nonumber\\
&  = i \sum_{m} \int_{s}^{t} \langle e^{ i\lambda H_{1}} B e^{-i\lambda
H}y_{m}, Wy_{m}\rangle d\lambda.\label{equ3.5}%
\end{align}
Observe that
\begin{align}
\sum_{m} |\langle e^{ i\lambda H_{1}} B e^{-i\lambda H}y_{m}, Wy_{m}\rangle|
\le\sum_{m} \big( \|B\|\|y_{m}\|^{2}\big) <\|B\| \cdot\sum_{m}\|y_{m}%
\|^{2}<\infty. \label{equ3.6}%
\end{align}
Combining (\ref{equ3.5}), (\ref{equ3.6}) and applying the Lebesgue Dominating
Theorem,
\[
\begin{aligned}
\tau(|W(t)-W(s)|P_n)&=   i    \int_s^t \sum_{m}\langle  e^{ i\lambda H_1} B e^{-i\lambda H}y_m, Wy_m\rangle d\lambda \\
&=   i    \int_s^t \sum_{m} \langle  We^{ i\lambda H_1} B e^{-i\lambda H}y_m,  y_m\rangle d\lambda \\
& = i \int_s^t \tau(We^{ i\lambda H_1} B e^{-i\lambda H}P_n) d\lambda.
\end{aligned}
\]
This implies that, for all $n\in\mathbb{N}$,
\[
|\tau(|W(t)-W(s)|P_{n})|\le\int_{s}^{t} \left|  \tau(We^{ i\lambda H_{1}} B
e^{-i\lambda H}P_{n}) \right|  d\lambda\le(t-s) \|B\|_{1}.
\]
Since $\tau$ is a normal weight of $\mathcal{M}$ and $P_{n}\rightarrow I$ in
strong operator topology, we conclude that
\[
\|W(t)-W(s)\|_{1}=\tau(|W(t)-W(s)|)=\sup_{n} \tau(|W(t)-W(s)|P_{n}) \le(t-s)
\|B\|_{1}.
\]
This ends the proof of the proposition.
\end{proof}

\section{$\mathcal{M}$-$H$-smoothness in Semifinite von Neumann Algebras}

Let $\mathcal{H}$ be a complex Hilbert space. Let $\mathcal{M} $ be
a countably decomposable, properly infinite, semifinite von Neumann algebra acting on $\mathcal H$ and   $\tau$
a faithful normal semifinite tracial weight of $\mathcal M$.  Let $\mathscr
A(\mathcal{M})$ be the set of densely defined, closed operators that
are affiliated with $\mathcal{M}$.

\subsection{A smooth condition}

The following definition will be crucial when showing the existence of wave
operators in semifinite von Neumann algebras.

\begin{definition}
\label{def4.2.8} Let $H$ be a  self-adjoint element in
$\mathscr A(\mathcal{M})$. A pair $(A, x)$ is said to be $\mathcal{M}$%
-$H$-smooth if

\begin{enumerate}
\item[(i)] $A\in\mathcal{M}\cap L^{2}(\mathcal{M},\tau)$ and $x\in\mathcal{H}$;

\item[(ii)] there exists a positive constant $c$ such that
\[
\int_{\mathbb{R}} \|ASe^{-i\lambda H} x \|^{2} d\lambda\le c^{2}\|S\|^{2},
\qquad\forall S\in\mathcal{M}.
\]

\end{enumerate}
\end{definition}

\subsection{Point-wise convergence of wave operator}

\begin{proposition}
\label{prop5.0.1} Let $H$ be a  self-adjoint element in $\mathscr
A(\mathcal{M})$. Assume  $B\in\mathcal{M}\cap L^{1}(\mathcal{M},\tau)$
and $x\in\mathcal{H}$ such that $(|B|^{1/2}, x)$ is
$\mathcal{M}$-$H$-smooth. Then there exists a positive constant $c$
such that, for all $s,t\in \mathbb{R}, a >0$ and $S\in\mathcal{M}$,
\[
\begin{aligned}
|\langle   \int_0^a e^{i(\lambda+t)H} SB e^{-i(\lambda+s)H}  x \ d\lambda , x  \rangle|
\le  c \|S\|   \left ( \int_s^\infty \||B|^{1/2} e^{-i\lambda H}x\|^2 d\lambda\right )^{1/2}
\end{aligned}
\]
and
\[
\begin{aligned}
|\langle   \int_0^a e^{i(\lambda+t)H}  B^*S e^{-i(\lambda+s)H} x \ d\lambda   , x  \rangle|
\le  {c} \|S\|  \left ( \int_t^\infty \||B|^{1/2} e^{-i\lambda H}x\|^2 d\lambda\right )^{1/2}.
\end{aligned}
\]

\end{proposition}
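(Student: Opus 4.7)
The plan is to reduce to the $\mathcal{M}$-$H$-smoothness hypothesis by splitting $B$ through its polar decomposition and then applying Cauchy--Schwarz in $L^2([0,a], d\lambda)$. Write $B = U|B|$ with $U \in \mathcal{M}$ a partial isometry; note that $|B|^{1/2} \in \mathcal{M}\cap L^2(\mathcal{M},\tau)$ since $\tau(|B|) = \|B\|_1 < \infty$, so $(|B|^{1/2},x)$ is a legitimate input to the smoothness hypothesis. Writing $SB = (SU|B|^{1/2})\,|B|^{1/2}$ allows me to rewrite the scalar integrand as
\[
\langle e^{i(\lambda+t)H} SB e^{-i(\lambda+s)H}x,\, x\rangle = \langle |B|^{1/2} e^{-i(\lambda+s)H}x,\; |B|^{1/2} U^* S^* e^{-i(\lambda+t)H}x\rangle.
\]

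Next, I would move the modulus inside the $\lambda$-integral and apply Cauchy--Schwarz on $[0,a]$ to bound it by the product of two square-root integrals. The first factor, $\int_0^a \||B|^{1/2} e^{-i(\lambda+s)H}x\|^2\, d\lambda$, becomes $\int_s^{s+a}\||B|^{1/2} e^{-i\mu H}x\|^2 d\mu$ after the substitution $\mu = \lambda + s$, and is then dominated by $\int_s^\infty \||B|^{1/2} e^{-i\mu H}x\|^2\, d\mu$. The second factor, $\int_0^a \||B|^{1/2} U^* S^* e^{-i(\lambda+t)H}x\|^2\, d\lambda$, is bounded by extending the range of integration to all of $\mathbb{R}$ and invoking the $\mathcal{M}$-$H$-smoothness of $(|B|^{1/2},x)$ applied to the operator $U^*S^* \in \mathcal{M}$; this yields $c^2\|U^*S^*\|^2 \le c^2\|S\|^2$. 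Multiplying the two square roots produces the first inequality.

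The second inequality is then handled symmetrically. Using $B^* = |B|U^* = |B|^{1/2}\,(|B|^{1/2}U^*)$, the integrand becomes
\[
\langle e^{i(\lambda+t)H} B^*S e^{-i(\lambda+s)H}x,\, x\rangle = \langle |B|^{1/2} U^* S e^{-i(\lambda+s)H}x,\; |B|^{1/2} e^{-i(\lambda+t)H}x\rangle,
\]
and the same Cauchy--Schwarz argument applies with the two factors swapped: now the smoothness hypothesis (applied to $U^*S$) absorbs the $s$-side integral after extension to $\mathbb{R}$, while the surviving tail, produced by the substitution $\mu = \lambda + t$, is the integral over $[t,\infty)$.

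I do not foresee a serious obstacle here; this is essentially the standard Kato-smoothness trick transplanted to the semifinite setting, and the only delicate bookkeeping is routing $S$ through the smoothness inequality via $U^*S^*$ (respectively $U^*S$) so that the partial isometry $U$ is absorbed free of charge by $\|U^*S^*\|\le\|S\|$, and tracking the change of variables carefully so that exactly the correct tail integral (over $[s,\infty)$ or $[t,\infty)$) remains on the prescribed side of each inequality.
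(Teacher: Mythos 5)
Your proposal is correct and follows essentially the same route as the paper's proof: polar decomposition $B=W|B|$, rewriting the integrand as an inner product of $|B|^{1/2}e^{-i(\lambda+s)H}x$ against $|B|^{1/2}W^*S^*e^{-i(\lambda+t)H}x$, Cauchy--Schwarz on $[0,a]$, a change of variables, and then extending one factor to all of $\mathbb{R}$ so the smoothness bound absorbs it while the other factor survives as the stated tail integral. The bookkeeping of which tail ($[s,\infty)$ versus $[t,\infty)$) remains in each inequality is handled exactly as in the paper.
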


\begin{proof}
Let $s,t\in\mathbb{R}, a >0$ be given. As $(|B|^{1/2}, x)$ is $\mathcal{M}%
$-$H$-smooth, there exists a positive constant $c$ such that
\begin{align}
\int_{\mathbb{R}} \||B|^{1/2}Se^{-i\lambda H} x \|^{2} d\lambda\le
c^{2}\|S\|^{2}, \qquad\forall S\in\mathcal{M}. \label{equ4.1}%
\end{align}
Assume that $B=W|B|$ is the polar decomposition of $B$ in $\mathcal{M}$, where
$W$ is a partial isometry in $\mathcal{M}$ and $|B|$ is a positive operator in
$\mathcal{M}$. We have
\begin{align}
&  |\int_{0}^{a} \langle e^{i(\lambda+t)H} SB e^{-i(\lambda+s)H} x, x\rangle
d\lambda|\nonumber\\
&  \qquad\qquad\le\int_{0}^{a} |\langle|B|^{1/2} e^{-i(\lambda+s)H}
x,|B|^{1/2} W^{*} S^{*}e^{-i(\lambda+t)H} x\rangle| d\lambda\nonumber\\
&  \qquad\qquad\le\int_{0}^{a} \Big\| |B|^{1/2} W^{*} S^{*}e^{-i(\lambda+t)H}
x\Big\| \Big\| |B|^{1/2} e^{-i(\lambda+s)H} x \Big\| d\lambda\nonumber\\
&  \qquad\qquad\le\left(  \int_{0}^{a} \Big\||B|^{1/2} W^{*} S^{*}
e^{-i(\lambda+t)H} x\Big\|^{2} d\lambda\right)  ^{1/2} \left(  \int_{0}^{a}
\Big\| |B|^{1/2} e^{-i(\lambda+s)H} x \Big\|^{2} d\lambda\right)
^{1/2}\nonumber\\
&  \qquad\qquad= \left(  \int_{t}^{a+t} \Big\||B|^{1/2} W^{*}S^{*}
e^{-i\lambda H} x\Big\|^{2} d\lambda\right)  ^{1/2} \left(  \int_{s}^{a+s}
\Big\| |B|^{1/2} e^{-i \lambda H} x \Big\|^{2} d\lambda\right)  ^{1/2}%
\nonumber\\
&  \qquad\qquad\le\left(  \int_{\mathbb{R}} \Big\| |B|^{1/2} W^{*}S^{*}
e^{-i\lambda H} x\Big\|^{2} d\lambda\right)  ^{1/2} \left(  \int_{s}^{\infty
}\Big\| |B|^{1/2} e^{-i \lambda H} x \Big\|^{2} d\lambda\right)
^{1/2}\nonumber\\
&  \qquad\qquad\le{c} \|S\| \left(  \int_{s}^{\infty}\Big\| |B|^{1/2} e^{-i
\lambda H} x \Big\|^{2} d\lambda\right)  ^{1/2}. \tag{by (\ref{equ4.1})}%
\end{align}
Similarly, we have
\[
\begin{aligned}
|\langle   \int_0^a e^{i(\lambda+t)H}  B^*S e^{-i(\lambda+s)H} x \ d\lambda   , x  \rangle|
\le  {c} \|S\|  \left ( \int_t^\infty |B|^{1/2} e^{-i\lambda H}x\|^2 d\lambda\right )^{1/2}.
\end{aligned}
\]
\end{proof}

\begin{lemma}
\label{lemma5.1.2} Suppose  $H$ and $H_{1}$ are  self-adjoint elements in
$\mathscr A(\mathcal{M})$. Assume  $J$ is in $\mathcal{M}$ such that
$J\mathcal{D}(H)\subseteq\mathcal{D}(H_{1})$ and the closure of
$H_{1}J-JH$ is in $\mathcal{M}\cap L^{1}(\mathcal{M},\tau)$. Let
$W(t)=e^{ itH_{1}}Je^{-itH}$ for each $t\in\mathbb{R}$.

If $x$ is a vector in $\mathcal{H}$ such that $(|H_{1}J-JH|^{1/2}, x)$ is
$\mathcal{M}$-$H$-smooth, then
\[
\lim_{a\rightarrow\infty} \|(W(t)-W(s))e^{-iaH}x\|=0, \qquad\text{ for all
$t>s$.}
\]

\end{lemma}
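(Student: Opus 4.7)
The natural starting point is the integral representation in Lemma \ref{lem3.5.1}(i). Write $B$ for the bounded extension of $H_{1}J-JH$, which lies in $\mathcal{M}\cap L^{1}(\mathcal{M},\tau)$ by hypothesis, and let $B=W|B|$ be its polar decomposition in $\mathcal{M}$. Since $e^{-iaH}x$ is just another vector in $\mathcal{H}$, Lemma \ref{lem3.5.1}(i) applies verbatim with $e^{-iaH}x$ in place of $x$, giving
\[
(W(t)-W(s))\,e^{-iaH}x \;=\; i\int_{s}^{t} e^{i\lambda H_{1}}\, B\, e^{-i(\lambda+a)H}x \, d\lambda,
\]
where I have used that $e^{-i\lambda H}$ and $e^{-iaH}$ commute.

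My plan is then to bound the norm directly rather than invoke Proposition \ref{prop5.0.1}, which is tailored to expressions with $H$ on both sides. Factoring $B = (W|B|^{1/2})\cdot|B|^{1/2}$, using unitarity of $e^{i\lambda H_{1}}$ under the integrand, and applying the Cauchy--Schwarz inequality on $[s,t]$, I expect to obtain
\[
\|(W(t)-W(s))e^{-iaH}x\| \;\le\; \|W|B|^{1/2}\|\,(t-s)^{1/2} \left(\int_{s+a}^{t+a} \||B|^{1/2} e^{-i\mu H}x\|^{2}\, d\mu\right)^{1/2}
\]
after the substitution $\mu=\lambda+a$.

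The conclusion is then immediate from the smoothness hypothesis: taking $S=I$ in Definition \ref{def4.2.8} yields $\int_{\mathbb{R}} \||B|^{1/2}e^{-i\mu H}x\|^{2}\, d\mu \le c^{2} < \infty$, so the tail integral $\int_{s+a}^{t+a} \||B|^{1/2}e^{-i\mu H}x\|^{2}\, d\mu$ tends to $0$ as $a\to\infty$ by absolute continuity of the Lebesgue integral. I do not foresee any serious obstacle; the only minor point to verify is that $|B|^{1/2}\in\mathcal{M}\cap L^{2}(\mathcal{M},\tau)$ so that the smoothness condition is meaningful for $|B|^{1/2}$ in the first place, and this follows from $\||B|^{1/2}\|_{2}^{2}=\tau(|B|)=\|B\|_{1}<\infty$.
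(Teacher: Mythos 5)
Your proof is correct and follows essentially the same route as the paper's: both apply Lemma \ref{lem3.5.1}(i) to the vector $e^{-iaH}x$, factor $B=(W|B|^{1/2})\cdot|B|^{1/2}$, use Cauchy--Schwarz on $[s,t]$ together with the change of variables $\mu=\lambda+a$, and conclude from the finiteness of $\int_{\mathbb{R}}\||B|^{1/2}e^{-i\mu H}x\|^{2}\,d\mu$ that the tail integral vanishes as $a\to\infty$. (The paper likewise proceeds by a direct norm estimate and does not invoke Proposition \ref{prop5.0.1} here, so your stated deviation is not actually a deviation.)
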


\begin{proof}
Note that $(|H_{1}J-JH|^{1/2}, x)$ is $\mathcal{M}$-$H$-smooth. There exists a
positive number $c$ such that
\begin{align}
\int_{\mathbb{R}} \||H_{1}J-JH|^{1/2} e^{-i\lambda H} x \|^{2} d\lambda\le
c^{2} . \label{equ4.2}%
\end{align}
From Lemma \ref{lem3.5.1},
\[
\begin{aligned}
\left \| \big( W(t)-W(s)\big)e^{-iaH}x\right \|  &\le  \int_s^t   \| e^{ i\lambda H_1} (H_1J-JH) e^{-i\lambda H}e^{-iaH}x \| d\lambda \\
&\le \int_s^t   \|  |H_1J-JH|^{1/2}\| \| |H_1J-JH|^{1/2} e^{-i\lambda H}e^{-iaH}x \| d\lambda \\
&\le \sqrt{(t-s)\|  |H_1J-JH|^{1/2}\|^2}\sqrt{ \int_s^t     \| |H_1J-JH|^{1/2} e^{-i(\lambda+a) H}x \|^2 d\lambda}\\
& = \sqrt{(t-s)\|  H_1J-JH\|}\sqrt{ \int_{a+s}^{a+t}     \| |H_1J-JH|^{1/2} e^{-i\lambda  H} x\|^2 d\lambda}\\
& \le \sqrt{(t-s)\|  H_1J-JH\|}\sqrt{ \int_{a+s}^{\infty} \|
|H_1J-JH|^{1/2} e^{-i\lambda H} x\|^2 d\lambda}
\end{aligned}
\]
From (\ref{equ4.2}) we have
\[
\lim_{a\rightarrow\infty} \|(W(t)-W(s))e^{-iaH}x\|=0.
\]
\end{proof}

The proof of the next result follows a strategy by Pearson in \cite{Pearson}.

\begin{proposition}
\label{thm5.0.2} Suppose $H$ and $H_{1}$ are  self-adjoint elements in
$\mathscr A(\mathcal{M})$. Assume $J$ is in $\mathcal{M}$ such that
$J\mathcal{D}(H)\subseteq\mathcal{D}(H_{1})$ and the closure of
$H_{1}J-JH$ is in $\mathcal{M}\cap L^{1}(\mathcal{M} ,\tau)$. Let
$W(t)=e^{ itH_{1}}Je^{-itH}$ for each $t\in\mathbb{R}$.

If $x$ is a vector in $\mathcal{H}$ such that $(|H_{1}J-JH|^{1/2}, x)$ is
$\mathcal{M}$-$H$-smooth, then
$W(t) x$ converges in $\mathcal{H}$ as $t\rightarrow\infty$.

\end{proposition}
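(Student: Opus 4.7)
The plan is to show that $W(t)x$ is Cauchy in $\mathcal{H}$ as $t\to\infty$, following the Pearson strategy in \cite{Pearson}. Since
\[
\|W(t)x-W(s)x\|^{2}=\langle W(t)^{\ast}(W(t)-W(s))x,x\rangle-\langle W(s)^{\ast}(W(t)-W(s))x,x\rangle,
\]
it suffices to bound each of these inner products by a quantity that tends to $0$ as $s,t\to\infty$. Let $B$ denote the closure of $H_{1}J-JH$, which by hypothesis lies in $\mathcal{M}\cap L^{1}(\mathcal{M},\tau)$.

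First I would apply Lemma \ref{lem3.5.1}(ii) with $y=x$ to obtain, for every $a>0$,
\[
\langle W(t)^{\ast}(W(t)-W(s))x,x\rangle - \langle e^{iaH}W(t)^{\ast}(W(t)-W(s))e^{-iaH}x,x\rangle
\]
equals an integral $i\int_{0}^{a}\langle e^{i(\lambda+t)H}(B^{\ast}J-B^{\ast}e^{-i(s-t)H_{1}}J-J^{\ast}B+J^{\ast}e^{-i(t-s)H_{1}}B)e^{-i(\lambda+s)H}x,x\rangle\,d\lambda$. Then I would let $a\to\infty$. The second term on the left vanishes in this limit because
\[
|\langle e^{iaH}W(t)^{\ast}(W(t)-W(s))e^{-iaH}x,x\rangle|\le \|J\|\,\|(W(t)-W(s))e^{-iaH}x\|\,\|x\|,
\]
which goes to $0$ by Lemma \ref{lemma5.1.2}.

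Next I would bound the resulting improper integral by applying Proposition \ref{prop5.0.1} to each of the four terms. The terms $J^{\ast}B$ and $J^{\ast}e^{-i(t-s)H_{1}}B$ are handled by the first inequality (with $S=J^{\ast}$ and $S=J^{\ast}e^{-i(t-s)H_{1}}$ respectively), while $B^{\ast}J$ and $B^{\ast}e^{-i(s-t)H_{1}}J$ are handled by the second inequality. Combining these (and noting each $S$ has norm $\le\|J\|$), we obtain, for $t\ge s$,
\[
|\langle W(t)^{\ast}(W(t)-W(s))x,x\rangle|\le 4c\|J\|\left(\int_{s}^{\infty}\||B|^{1/2}e^{-i\lambda H}x\|^{2}\,d\lambda\right)^{1/2}.
\]
An entirely symmetric argument, starting from Lemma \ref{lem3.5.1}(ii) with the roles of $s$ and $t$ interchanged, yields the same type of bound for $|\langle W(s)^{\ast}(W(t)-W(s))x,x\rangle|$.

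Finally, the smoothness hypothesis applied with $S=I$ gives $\int_{\mathbb{R}}\||B|^{1/2}e^{-i\lambda H}x\|^{2}d\lambda\le c^{2}<\infty$, so the tail $\int_{s}^{\infty}\||B|^{1/2}e^{-i\lambda H}x\|^{2}d\lambda$ tends to $0$ as $s\to\infty$. Thus $\|W(t)x-W(s)x\|^{2}\to 0$ as $s,t\to\infty$, so $\{W(t)x\}$ is Cauchy and converges in $\mathcal{H}$. The only nontrivial step is the justification of passing to the limit $a\to\infty$ in the Lemma \ref{lem3.5.1}(ii) identity; this is precisely where Lemma \ref{lemma5.1.2} is used, and it is the reason the smoothness hypothesis must be strong enough to kill the boundary term as well as to furnish the $L^{2}$-tail estimate.
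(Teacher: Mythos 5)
Your proposal is correct and follows essentially the same route as the paper: the Pearson strategy of writing $\|(W(t)-W(s))x\|^{2}$ as the difference of the two inner products $\langle W(t)^{*}(W(t)-W(s))x,x\rangle$ and $\langle W(s)^{*}(W(t)-W(s))x,x\rangle$, controlling each via the identity of Lemma \ref{lem3.5.1}(ii), the uniform-in-$a$ bound of Proposition \ref{prop5.0.1}, and the vanishing of the boundary term from Lemma \ref{lemma5.1.2}. The only (immaterial) difference is that you pass to the limit $a\to\infty$ explicitly, whereas the paper fixes $\epsilon$ and chooses $a$ large for each fixed pair $t,s$; both arguments are valid.
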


\begin{proof}
To prove the result, it suffices to show that for every $\epsilon>0$ there
exists an $N>0$ such that, if $t>s>N$, then $\|(W(t)-W(s))x\|<\epsilon.$

Denote by $B$ the closure of $H_{1}J-JH$. Thus $B$ is in $\mathcal{M}\cap
L^{1}(\mathcal{M},\tau)$. Let $\epsilon>0$ be given. From Lemma \ref{lem3.5.1}%
, for any $t,s, a>0$, we have
\[
\begin{aligned}
\langle & \left (W(t)^* (W(t)-W(s))- e^{ iaH}W(t)^*(W(t)-W(s))e^{-iaH}  \right )x, x\rangle \\ &=i\int_0^a \langle e^{ i(\lambda+t) H}   \left ( B^*  J- B^* e^{-i(s-t) H_1}J -J^*   B+ J^*e^{-i(t-s)  H_1} B)\right )   e^{-i(\lambda+s) H}  x, x\rangle d\lambda
.\end{aligned}
\]
As $(|B|^{1/2}, x)$ is $\mathcal{M}$-$H$-smooth, by Proposition
\ref{prop5.0.1}, there exists an $N_{1}>0$ such that for all $t, s>N_{1}$ and
all $a>0$, we have
\begin{align}
\left|  \langle\left(  W(t)^{*} (W(t)-W(s))- e^{ iaH}W(t)^{*}%
(W(t)-W(s))e^{-iaH} \right)  x, x\rangle\right|  <\frac\epsilon4.
\label{equ4.3}%
\end{align}
For each $t, s>N_{1}$, from Lemma \ref{lemma5.1.2} it follows that%
\begin{align}
\|\langle W(t)^{*}(W(t)-W(s))e^{-iaH} x, x\rangle\|<\frac\epsilon4, \ \text{
when } a \text{ is large enough.} \label{equ4.4}%
\end{align}
Thus, from (\ref{equ4.3}) and (\ref{equ4.4}), we conclude that, for $t,
s>N_{1}$,
\begin{align}
\left|  \langle\left(  W(t)^{*} (W(t)-W(s)) \right)  x, x\rangle\right|
<\frac\epsilon2. \label{equ4.5}%
\end{align}
Similarly, there exists an $N_{2}>0$ such that, when $t, s>N_{2}$, we have
\begin{align}
\left|  \langle\left(  W(s)^{*} (W(t)-W(s)) \right)  x, x\rangle\right|
<\frac\epsilon2. \label{equ4.6}%
\end{align}
Now, (\ref{equ4.5}) and (\ref{equ4.6}) imply that, for all $t,s>\max
\{N_{1},N_{2}\}$,
\[
\left\|  (W(t)-W(s)) x\right\|  <\epsilon,
\]
which ends the proof of the proposition.
\end{proof}

\section{Norm Absolutely Continuous Projections in Semifinite von Neumann
algebras}

Let $\mathcal{H}$ be a complex Hilbert space. Let $\mathcal{M} $ be
a countably decomposable, properly infinite, semifinite von Neumann
algebra acting on $\mathcal H$ and   $\tau$ a faithful normal
semifinite tracial weight of $\mathcal M$.  Let $\mathscr
A(\mathcal{M})$ be the set of densely defined, closed operators
affiliated with $\mathcal{M}$.

\subsection{Norm absolutely continuous projections}

\begin{definition}
\label{def5.1.1} Let $H$ be a  self-adjoint element in $\mathscr
A(\mathcal{M})$ and let $\{E(\lambda)\}_{\lambda\in\mathbb{R}}$ be
the spectral resolution of the identity for $H$ in $\mathcal{M}$. We
define $\mathscr P_{ac}^{\infty}(H)$ to be the collection of those
projections $P$ in $\mathcal{M}$ such that

\begin{enumerate}

\item [] \emph{the mapping $\lambda\mapsto PE(\lambda) P $ from $\lambda
\in\mathbb{R}$ into $\mathcal{M}$ is locally absolutely continuous, i.e. for
all $a,b\in\mathbb{R}$ with $a<b$ and every $\epsilon>0$, there exists a
$\delta>0$ such that $\sum_{i}\|PE({ b_{i}}) P - PE({ a_{i}} )P \|< \epsilon$
for every finite collection $\{(a_{i},b_{i})\}$ of disjoint intervals in
$[a,b]$ with $\sum_{i}(b_{i}-a_{i})<\delta.$}
\end{enumerate}

A projection $P$ in $\mathscr P_{ac}^{\infty}(H)$ is called a norm
absolutely continuous projection with respect to $H$.
\end{definition}

\begin{remark}
Definition \ref{def5.1.1} is closely related to the concept of
$H$-smoothness introduced by Kato in \cite{Kato2} for
$\mathcal{B}(\mathcal{H})$, where $\mathcal{H}$ is a complex
separable Hilbert space. From one of equivalent definitions of
$H$-smoothness in Theorem 5.1 of \cite{Kato2}, it is not hard to see
that if a projection $P$ is $H$-smooth, then $P\in\mathscr
P_{ac}^{\infty}(H)$.
\end{remark}

In the case when $\mathcal{M}=\mathcal{B}(\mathcal{H})$, the
following proposition relates $\mathscr P_{ac}^{\infty}(H)$ to
$\mathcal{H}_{ac}(H)$.

\begin{proposition}
\label{example5.1.3} Let $\mathcal{H}$ be a complex infinite
dimensional separable  Hilbert space and let
$\mathcal{B}(\mathcal{H})$ be the set of all bounded linear
operators on $\mathcal{H}$. Assume $H$ is a densely defined
self-adjoint operator on $\mathcal{H}$ \emph{(}so $H \in\mathscr A(\mathcal{B}%
(\mathcal{H}))$\emph{)}. Then a vector $x$ is in
$\mathcal{H}_{ac}(H)$ if and only if the rank one projection
$x\otimes x$ is in $\mathscr P_{ac}^{\infty }(H)$, where $x\otimes
x$ is defined by $(x\otimes x)y= \langle y,x\rangle x$ for all
$y\in\mathcal{H}$.
\end{proposition}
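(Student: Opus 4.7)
The plan is to reduce the two conditions to a single scalar identity. Without loss of generality assume $\|x\|=1$ (for otherwise $x\otimes x$ is not a projection unless $x=0$, which is a trivial case). Let $\{E(\lambda)\}_{\lambda\in\mathbb R}$ be the spectral resolution of the identity for $H$.

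The key computation is the identity
\[
(x\otimes x)\,E(\lambda)\,(x\otimes x) \;=\; \langle E(\lambda)x,x\rangle\,(x\otimes x),\qquad \lambda\in\mathbb R,
\]
which I would verify by evaluating both sides on an arbitrary $y\in\mathcal H$: the left-hand side applied to $y$ yields $\langle y,x\rangle\,\langle E(\lambda)x,x\rangle\, x$, and the right-hand side gives the same. Taking operator norms and using $\|x\otimes x\|=1$, I obtain
\[
\bigl\|(x\otimes x)E(b)(x\otimes x)-(x\otimes x)E(a)(x\otimes x)\bigr\|
\;=\;\bigl|\langle E(b)x,x\rangle-\langle E(a)x,x\rangle\bigr|
\]
for every pair $a<b$ in $\mathbb R$.

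The function $\lambda\mapsto \langle E(\lambda)x,x\rangle$ is monotone non-decreasing, so on any interval $(a_i,b_i)$ with $a_i<b_i$ the absolute value above equals the difference $\langle E(b_i)x,x\rangle-\langle E(a_i)x,x\rangle$. Summing over a disjoint finite family $\{(a_i,b_i)\}$ in $[a,b]$ shows that the sum
$\sum_i \|(x\otimes x)E(b_i)(x\otimes x)-(x\otimes x)E(a_i)(x\otimes x)\|$
coincides with $\sum_i |\langle E(b_i)x,x\rangle-\langle E(a_i)x,x\rangle|$. Therefore the local absolute continuity condition of Definition \ref{def5.1.1} applied to $P=x\otimes x$ is identical, word for word, to the local absolute continuity of the scalar function $\lambda\mapsto \langle E(\lambda)x,x\rangle$, which is precisely the defining property of $x\in\mathcal H_{ac}(H)$ recalled in Section 2.4. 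This gives both implications simultaneously.

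The proof is short and involves no genuine obstacle; the only substantive point is the operator identity $(x\otimes x)E(\lambda)(x\otimes x)=\langle E(\lambda)x,x\rangle(x\otimes x)$, which turns the vector-valued absolute continuity condition defining $\mathscr P_{ac}^{\infty}(H)$ into the classical scalar spectral-measure condition defining $\mathcal H_{ac}(H)$.
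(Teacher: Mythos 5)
Your proof is correct and follows essentially the same route as the paper: the single identity $(x\otimes x)E(\lambda)(x\otimes x)=\langle E(\lambda)x,x\rangle\,(x\otimes x)$ reduces the norm absolute continuity of $\lambda\mapsto PE(\lambda)P$ to the absolute continuity of the scalar function $\lambda\mapsto\langle E(\lambda)x,x\rangle$, which is exactly the paper's argument (you merely spell out the norm computation and the normalization $\|x\|=1$ that the paper leaves implicit).
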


\begin{proof}
Let $\{E(\lambda)\}_{\lambda\in\mathbb{R}}$ be the spectral resolution of the
identity for $H$ in $\mathcal{B}(\mathcal{H})$. Let $x$ be a vector in
$\mathcal{H}$. For all $\lambda\in\mathbb{R }$, we have
\[
(x\otimes x) E(\lambda) (x\otimes x) = \langle E(\lambda) x,x\rangle(x\otimes
x).
\]
Thus, $x$ is in $\mathcal{H}_{ac}(H)$ if and only if $x\otimes x$ is
in $\mathscr P_{ac}^{\infty}(H)$.
\end{proof}

Next  example shows there exists   self-adjoint operators in $\mathcal M$ with nonzero norm absolutely continuous projections.
\begin{example}
\label{example5.1.4} Let $\mathcal{N}$ be a diffuse finite von Neumann algebra
with a faithful normal tracial state $\tau_{\mathcal{N}}$ and let
$\mathcal{H}_{0}$ be an infinite dimensional separable Hilbert space. Then
$\mathcal{M}=\mathcal{N}\otimes\mathcal{B}(\mathcal{H}_{0})$ is a semifinite
von Neumann algebra with a faithful normal tracial weight $\tau_{\mathcal{M}%
}=\tau_{\mathcal{N}} \otimes Tr$, where $Tr$ is the canonical trace
of $\mathcal{B}(\mathcal{H}_{0})$. We might further assume that
$\mathcal{M}$
acts naturally on the Hilbert space $L^{2}(\mathcal{N},\tau_{\mathcal{N}%
})\otimes\mathcal{H}_{0}$.

Let $X $ be a  densely defined, self-adjoint operator with purely
absolutely continuous spectrum on $\mathcal{H}_{0}$. Then
$I_{\mathcal{N}}\otimes X$ is a densely defined, self-adjoint
operator affiliated with $\mathcal{M}$. For each vector
$y\in\mathcal{H}_{0}$, denote by $Q_{y}$ the rank one projection
$y\otimes y$ in $\mathcal{B}(\mathcal{H}_{0})$. We claim that $I_{\mathcal{N}%
}\otimes Q_{y} \in\mathscr P_{ac}^{\infty}(I_{\mathcal{N}}\otimes X)$. In
fact, if $\{E(\lambda)\}_{\lambda\in\mathbb{R}}$ is the spectral resolution of
the identity for $X$ in $\mathcal{B}(\mathcal{H}_{0})$, then $\{I_{\mathcal{N}%
}\otimes E(\lambda)\}_{\lambda\in\mathbb{R}}$ is the spectral resolution of
the identity for $I_{\mathcal{N}}\otimes X$ in $\mathcal{N}\otimes
\mathcal{B}(\mathcal{H}_{0})=\mathcal{M}$. Note $\|(I_{\mathcal{N}}\otimes
Q_{y}) (I_{\mathcal{N}}\otimes E(\lambda)-I_{\mathcal{N}}\otimes
E(\mu))(I_{\mathcal{N}}\otimes Q_{y})\|=\|Q_{y}(E(\lambda)- E(\mu
))Q_{y}\|=\langle(E(\lambda)- E(\mu))y, y \rangle\|y\|^{2} $ for all
$\lambda>\mu$. We have that $I_{\mathcal{N}}\otimes Q_{y} \in\mathscr P_{ac}%
^{\infty}(I\otimes X)$.
\end{example}

It is easy to see the following statement.

\begin{lemma}
\label{lemma4.1.2} Suppose  $H$ is a  self-adjoint element in
$\mathscr A(\mathcal{M})$. If $P\in\mathscr P_{ac}^{\infty}(H)$,
then $P\le P_{ac}(H)$, where $P_{ac}(H)$ is the projection from
$\mathcal{H}$ onto $\mathcal{H}_{ac}(H)$.
\end{lemma}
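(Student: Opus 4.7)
\medskip

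\noindent\textbf{Proof proposal.} The plan is to show directly that the range of $P$ is contained in $\mathcal{H}_{ac}(H)$, which by definition of $P_{ac}(H)$ gives $P\le P_{ac}(H)$. So I fix an arbitrary nonzero $y$ with $Py=y$ and check that the scalar function $\lambda\mapsto \langle E(\lambda)y,y\rangle$ is locally absolutely continuous on $\mathbb{R}$.

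The key observation is that for every $\lambda$,
\[
\langle E(\lambda)y,y\rangle=\langle E(\lambda)Py,Py\rangle=\langle PE(\lambda)Py,y\rangle,
\]
so increments of the scalar function are controlled by operator-norm increments of $PE(\lambda)P$. Concretely, for a finite collection $\{(a_i,b_i)\}$ of disjoint intervals,
\[
\sum_{i}\bigl|\langle E(b_i)y,y\rangle-\langle E(a_i)y,y\rangle\bigr|
=\sum_{i}\bigl|\langle (PE(b_i)P-PE(a_i)P)y,y\rangle\bigr|
\le \|y\|^{2}\sum_{i}\|PE(b_i)P-PE(a_i)P\|.
\]

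Now fix $a<b$ in $\mathbb{R}$ and $\epsilon>0$. Since $P\in\mathscr P_{ac}^{\infty}(H)$, Definition \ref{def5.1.1} supplies a $\delta>0$ such that whenever $\{(a_i,b_i)\}$ is a finite collection of disjoint intervals in $[a,b]$ with $\sum_i(b_i-a_i)<\delta$, one has $\sum_i\|PE(b_i)P-PE(a_i)P\|<\epsilon/\|y\|^{2}$. Combining with the previous display yields $\sum_i|\langle E(b_i)y,y\rangle-\langle E(a_i)y,y\rangle|<\epsilon$, which is precisely local absolute continuity of $\lambda\mapsto\langle E(\lambda)y,y\rangle$. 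Hence $y\in\mathcal{H}_{ac}(H)$, so $P\mathcal{H}\subseteq\mathcal{H}_{ac}(H)$ and $P\le P_{ac}(H)$.

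There is no genuine obstacle here: the argument is a one-step unfolding of the definitions together with the elementary inequality $|\langle Ay,y\rangle|\le\|A\|\,\|y\|^{2}$ used to pass from operator-norm absolute continuity of $PE(\lambda)P$ to scalar absolute continuity of $\langle E(\lambda)y,y\rangle$.
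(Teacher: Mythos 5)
Your argument is correct and is exactly the routine unfolding of the definitions that the paper has in mind (the paper states this lemma with no proof, prefacing it only with ``It is easy to see''). The key inequality $\sum_i|\langle E(b_i)y,y\rangle-\langle E(a_i)y,y\rangle|=\sum_i|\langle (PE(b_i)P-PE(a_i)P)y,y\rangle|\le\|y\|^2\sum_i\|PE(b_i)P-PE(a_i)P\|$ for $y=Py$ is the whole content, and your handling of it is complete.
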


\begin{definition}
\label{def5.1.5} Suppose that $P\in\mathscr P_{ac}^{\infty}(H)$. For
each interval $[a,b]$, we define
\[
V_{[a,b]} (P) = \sup\{ \sum_{i=1}^{m} \|PE_{\lambda_{i}}P-PE_{\lambda_{i-1}%
}P\| : a=\lambda_{0}<\lambda_{1}<\cdots<\lambda_{m}=b \text{ is a
partition of } [a,b] \}
\]
and
\[
\Psi(\lambda)= \left\{
\begin{aligned} & V_{[0,\lambda]} (P) \qquad \text{ if } \lambda \ge 0\\
& V_{[ \lambda, 0]} (P) \qquad \text{ if } \lambda < 0 \end{aligned}\right.
\]

\end{definition}


\begin{lemma}
$\Psi$ is locally absolutely continuous on $\mathbb{R}$ and $\Psi^{\prime}$
exists almost everywhere.
\end{lemma}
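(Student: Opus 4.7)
The plan is to reduce to the classical real-analysis fact that the total variation function of an absolutely continuous function is itself absolutely continuous, and that any (real-valued) absolutely continuous function on an interval is differentiable a.e. The only thing that needs verification is that the standard argument still works when the underlying function $f(\lambda):=PE(\lambda)P$ takes values in the Banach space $(\mathcal M,\|\cdot\|)$ rather than in $\mathbb R$; this is routine.

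First I would observe that, by construction, $\Psi$ is monotone non-decreasing on $[0,\infty)$ and monotone non-increasing on $(-\infty,0]$, and that $\Psi$ is finite-valued on every bounded interval: given $[a,b]$, use the definition of $\mathscr P_{ac}^\infty(H)$ with $\epsilon=1$ to produce $\delta>0$ such that $\sum_i\|f(b_i)-f(a_i)\|<1$ whenever the $(a_i,b_i)$ are disjoint sub-intervals of $[a,b]$ with $\sum_i(b_i-a_i)<\delta$; partitioning $[a,b]$ into finitely many pieces of length less than $\delta$ then bounds $V_{[a,b]}(P)$ by a finite constant.

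Next I would prove local absolute continuity of $\Psi$. Fix $[a,b]$ and $\epsilon>0$, and choose $\delta>0$ from the definition of $P\in\mathscr P_{ac}^\infty(H)$ corresponding to $\epsilon$ on $[a,b]$. Let $\{(a_i,b_i)\}_{i=1}^n$ be any finite collection of disjoint sub-intervals of $[a,b]$ with $\sum_i(b_i-a_i)<\delta$. For each $i$ pick an arbitrary partition $a_i=\lambda_{i,0}<\lambda_{i,1}<\cdots<\lambda_{i,m_i}=b_i$. The combined family $\{(\lambda_{i,k-1},\lambda_{i,k})\}_{i,k}$ is still a disjoint family of sub-intervals of $[a,b]$ of total length less than $\delta$, so
\[
\sum_{i=1}^n\sum_{k=1}^{m_i}\|PE(\lambda_{i,k})P-PE(\lambda_{i,k-1})P\|<\epsilon.
\]
Taking the supremum over all choices of partitions of each $[a_i,b_i]$ yields
\[
\sum_{i=1}^n\bigl(\Psi(b_i)-\Psi(a_i)\bigr)=\sum_{i=1}^n V_{[a_i,b_i]}(P)\le\epsilon
\]
(where on the side $b_i\le 0$ one uses the corresponding definition of $\Psi$, and on an interval straddling $0$ one splits at $0$ and uses $V_{[a_i,0]}(P)+V_{[0,b_i]}(P)=V_{[a_i,b_i]}(P)$ by the additivity of total variation). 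This is exactly local absolute continuity of $\Psi$ on $\mathbb R$.

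Finally, since $\Psi$ is a real-valued, monotone, locally absolutely continuous function on $\mathbb R$, the classical Lebesgue differentiation theorem for absolutely continuous functions (see e.g.\ any standard real analysis reference) guarantees that $\Psi'$ exists almost everywhere on $\mathbb R$. The only mildly subtle step is the passage from the AC estimate on $f$ to the same AC estimate on the total-variation function $\Psi$, which is handled by the refinement argument above; once that is done, the differentiability conclusion is immediate from the real-variable theory.
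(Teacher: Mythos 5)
Your argument is correct and is exactly the standard verification the paper leaves implicit (its proof is just ``can be verified directly'' with a pointer to Proposition 1.2.1 of Arendt et al.): the refinement argument showing $\sum_i V_{[a_i,b_i]}(P)\le\epsilon$, hence absolute continuity of the variation function, followed by Lebesgue differentiation for real-valued monotone/AC functions. The only cosmetic point is that for intervals in $(-\infty,0]$ one has $\Psi(b_i)-\Psi(a_i)=-V_{[a_i,b_i]}(P)$, so the absolute-continuity estimate should be stated for $|\Psi(b_i)-\Psi(a_i)|=V_{[a_i,b_i]}(P)$, which you essentially acknowledge.
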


\begin{proof}It can be verified directly (also see Proposition 1.2.1 in \cite{Arendt}).
\end{proof}

\subsection{Cut-off function $\omega_{n}$}

\begin{definition}
\label{def4.1.5} Suppose $H$ is a  self-adjoint element in $\mathscr
A(\mathcal{M})$. Assume that $P\in\mathscr P_{ac}^{\infty}(H)$ and
$\Psi$ are as in Definition \ref{def5.1.1} and Definition
\ref{def5.1.5}. For each $n\in\mathbb{N}$, we define,
\[
\omega_{n}(\lambda)= \left\{  \begin{aligned}
& 1 \qquad \text { if } |\Psi '(\lambda)|\le n \text { and }   | \lambda| \le n\\
& 0 \qquad \text { otherwise}
\end{aligned}\right.
\]

\end{definition}


\begin{lemma}
\label{lemma5.2.2.1} Suppose  $H$ is a  self-adjoint element in
$\mathscr A(\mathcal{M})$. Assume $P\in\mathscr P_{ac}^{\infty}(H)$
and $x\in\mathcal{H}$. For each $n\in\mathbb{N}$, let
$\omega_{n}(\lambda)$ be as
in Definition \ref{def4.1.5}. Let%
\[
\omega_{n}(H)=\int_{\mathbb{R}} \omega_{n}(\lambda) dE(\lambda).
\]
Then
\[
\omega_{n}(H)\rightarrow I \text{ in strong operator topology, as $n\rightarrow \infty$.}
\]

\end{lemma}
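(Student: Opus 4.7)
The plan is to recognize $\omega_n(H)$ as a monotone increasing sequence of spectral projections of $H$ and identify its strong operator topology limit as $I$. Since $\omega_n(\lambda) \leq \omega_{n+1}(\lambda)$ pointwise on $\mathbb{R}$, the Borel functional calculus gives $\omega_n(H) = E(\Omega_n) \leq E(\Omega_{n+1}) = \omega_{n+1}(H)$, where
\begin{equation*}
\Omega_n = \{\lambda \in \mathbb{R} : |\lambda| \leq n \text{ and } |\Psi'(\lambda)| \leq n\}.
\end{equation*}
Consequently the bounded monotone sequence $\{\omega_n(H)\}$ of projections in $\mathcal{M}$ increases in SOT to $Q := E(\Omega_\infty)$, where $\Omega_\infty = \bigcup_n \Omega_n = \{\lambda \in \mathbb{R} : \Psi'(\lambda) \text{ exists and is finite}\}$.

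Next I would invoke the preceding lemma that $\Psi$ is locally absolutely continuous on $\mathbb{R}$, from which the classical theory of AC functions yields that $\Psi$ is differentiable with finite derivative at Lebesgue-a.e.\ point. Hence $\mathbb{R} \setminus \Omega_\infty$ is a Lebesgue-null set. It remains to verify that $E(\mathbb{R} \setminus \Omega_\infty) = 0$, equivalently that for every $y \in \mathcal{H}$ the scalar spectral measure $\mu_y(\Delta) = \langle E(\Delta) y, y\rangle$ gives zero mass to the bad set $\mathbb{R} \setminus \Omega_\infty$.

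The principal obstacle is this final measure-theoretic step, since in general the spectral measure of a self-adjoint operator may place mass on Lebesgue-null sets. I would decompose $\mathcal{H}$ according to the Lebesgue-AC and singular parts of $H$; the AC component contributes nothing, because any Lebesgue-null set is null for the AC spectral measure. For the singular component, the strategy is to exploit the assumption $P \in \mathscr{P}_{ac}^\infty(H)$ and the fact that $\Psi$ arises as the total-variation function of the \emph{norm}-AC operator-valued path $\lambda \mapsto PE(\lambda)P$, not merely as an abstract scalar AC function, to control the support of the singular spectral measure of $H$ relative to $\Omega_\infty$. Combining this with the monotone convergence of projections in SOT yields $\omega_n(H) \to I$ as desired.
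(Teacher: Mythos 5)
Your first two steps reproduce the paper's own (very terse) argument: $\omega_n(H)=E(\Omega_n)$ is an increasing sequence of spectral projections converging in the strong operator topology to $E(\Omega_\infty)$, and $\mathbb R\setminus\Omega_\infty$ is Lebesgue-null because the locally absolutely continuous function $\Psi$ is differentiable almost everywhere; the paper stops there and simply asserts the conclusion. You are right that the remaining step, $E(\mathbb R\setminus\Omega_\infty)=0$, is where the real content lies, but your plan for it cannot be carried out: the singular part of the spectral measure of $H$ bears no relation to $P$ or to $\Psi$, so the norm-absolute continuity of $\lambda\mapsto PE(\lambda)P$ puts no constraint on where $H$ has point or singular continuous spectrum. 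Concretely, let $\mathcal M=\mathcal B\bigl(L^2[0,1]\oplus\mathbb C\bigr)$, $H=M_\lambda\oplus 0$, and let $P$ be the rank-one projection onto $f\oplus 0$ with $|f(\lambda)|^2=\lambda^{-1/2}$. Then $P\in\mathscr P_{ac}^{\infty}(H)$ and $\Psi(\lambda)$ is a constant multiple of $\sqrt{\lambda}$ on $[0,1]$, so $\Psi'(0)$ does not exist finitely and $\omega_n(0)=0$ for every $n$; yet $E(\{0\})\neq 0$ because $0\oplus 1$ is an eigenvector. Hence $\omega_n(H)(0\oplus 1)=0$ for all $n$, the conclusion read literally on all of $\mathcal H$ fails, and no completion of your singular-part step is possible.

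What saves the argument is that the convergence is only ever needed on the range of $P_{ac}(H)$: by Lemma~\ref{lemma4.1.2} one has $P\le P_{ac}(H)$, and for $y\in\mathcal H_{ac}(H)$ the scalar measure $\langle E(\cdot)y,y\rangle$ is absolutely continuous with respect to Lebesgue measure, hence assigns no mass to $\mathbb R\setminus\Omega_\infty$. So the statement you can actually prove --- and the one that suffices for Lemma~\ref{lemma4.2.3}, Proposition~\ref{prop4.3.2} and Theorem~\ref{thm5.0.3}, where $\omega_n(H)$ is always applied to vectors of the form $Px$ --- is $\omega_n(H)P_{ac}(H)\to P_{ac}(H)$, equivalently $\omega_n(H)P\to P$, in the strong operator topology. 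Your own decomposition into Lebesgue-absolutely-continuous and singular parts already handles the absolutely continuous piece; replace the vague ``control the support of the singular spectral measure'' step by this restriction and the proof closes. As written, it does not.
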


\begin{proof}
Let $\Delta_{n}=\{\lambda\in\mathbb{R }\ : \ \omega_{n}(\lambda)=1\}$. Observe
that $\mathbb{R}\setminus(\cup_{n}\Delta_{n})$ is a measure zero set. Thus
$\omega_{n}(H)\rightarrow I \text{ in strong operator topology, as
$n\rightarrow\infty$.} $
\end{proof}


\begin{lemma}
\label{lemma4.2.3} Suppose $H$ is a  self-adjoint element in
$\mathscr A(\mathcal{M})$. Assume $P\in\mathscr P_{ac}^{\infty}(H)$
and $x\in\mathcal{H}$. For each $n\in\mathbb{N}$, let
$\omega_{n}(\lambda)$ be as in Definition \ref{def4.1.5}. The
following statements are true.

\begin{enumerate}
\item[(i)] For real numbers $a,b$ with $a<b$, the mapping $\lambda\mapsto P
E(\lambda)\omega_{n}(H) x $ from $[a,b]$ into $\mathcal{H}$ is absolutely continuous.

\item[(ii)] We have
\[
\frac{d \big(  P E(\lambda) \omega_{ n}(H) x \big)}{d\lambda} \text{
exists in  $ \mathcal{H}$ for   $\lambda \in \mathbb R$ a.e.}
\]
and the mapping
\[
\lambda\mapsto\frac{d \big(  P E(\lambda) \omega_{ n}(H) x \big)}{d\lambda}
\text{ \ from $\mathbb{R}$ into $\mathcal{H}$ is locally Bochner integrable.}
\]

\item[(iii)] We have
\[
\begin{aligned}
\left\|\frac {d \big(  P E(\lambda) \omega_{ n}(H) x \big)}{d\lambda } \right\|
& \le  \omega_{ n}(\lambda) \sqrt {n} \cdot  \sqrt{\frac {d\langle
E(\lambda)  P_{ac}(H)x,  P_{ac}(H)x  \rangle }{d\lambda}}  \quad
\text{for   $\lambda \in \mathbb R$ a.e.}\end{aligned}
\]

\item[(iv)] We have
\[
\int_{\mathbb{R}} \left\|\frac{d \big( P E(\lambda) \omega_{ n}(H) x \big)}%
{d\lambda} \right\| d\lambda<\infty\quad\text{ and } \quad\int_{\mathbb{R}}
{\left\|\frac{d \big( P E(\lambda) \omega_{ n}(H) x \big)}{d\lambda} \right\|}^{2}
d\lambda<n\|x\|^{2}.
\]

\item[(v)] The mapping
\[
\lambda\mapsto e^{-it\lambda} \frac{d \big( PE(\lambda)\omega_{ n}(H) x
\big)}{d\lambda}%
\]
is in $L^{1}(\mathbb{R}, \mathcal{H})\cap L^{2}(\mathbb{R}, \mathcal{H}) $ for
each $t\in\mathbb{R}$, with
\[
Pe^{-itH}\omega_{{n}}(H) x = \int_{\mathbb{R}} e^{-it\lambda} \frac{d
\big( PE(\lambda)\omega_{ n}(H) x \big)}{d\lambda} d\lambda.
\]

\item[(vi)] We have
\[
\int_{\mathbb{R}} \|Pe^{-i\lambda H}\omega_{{n}}(H) x\|^{2} d\lambda=\frac1
{2\pi} \int_{\mathbb{R}} \left\|  \frac{d \big( PE(\lambda)\omega_{n}(H) x
\big)}{d\lambda}\right\|  ^{2} d\lambda\le\frac{n } {2\pi} \|x\|^{2}.
\]

\end{enumerate}
\end{lemma}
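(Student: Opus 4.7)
The plan is to ground all six parts in one elementary operator estimate together with the reduction $x\rightsquigarrow P_{ac}(H)x$. For any spectral interval $Q=E(b)-E(a)$ with $a<b$ and any $y\in\mathcal H$, writing $z=Qy$ gives $Qz=z$, and hence
\[
\|PQy\|^{2}=\|Pz\|^{2}=\langle QPQz,z\rangle\le\|PQP\|\,\|z\|^{2}\le\bigl(\Psi(b)-\Psi(a)\bigr)\|Qy\|^{2},
\]
by Definition \ref{def5.1.5}. Moreover, Lemma \ref{lemma4.1.2} yields $P\le P_{ac}(H)$, and since $P_{ac}(H)$ commutes with every $E(\lambda)$ and with $\omega_{n}(H)$,
\[
P\bigl(E(b)-E(a)\bigr)\omega_{n}(H)x=P\bigl(E(b)-E(a)\bigr)\omega_{n}(H)P_{ac}(H)x.
\]
This substitution is the crux: it replaces the scalar measure $d\langle E(\lambda)x,x\rangle$ by the Lebesgue-absolutely continuous $d\langle E(\lambda)P_{ac}(H)x,P_{ac}(H)x\rangle$ (using $P_{ac}(H)x\in\mathcal H_{ac}(H)$), permitting pointwise differentiation later.

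For (i), given disjoint $\{(a_{i},b_{i})\}\subset[a,b]$ with $\sum(b_{i}-a_{i})<\delta$, apply the key inequality with $y=\omega_{n}(H)x$, together with Cauchy-Schwarz and the pairwise orthogonality $\sum_{i}(E(b_{i})-E(a_{i}))\le I$:
\[
\sum_{i}\|F(b_{i})-F(a_{i})\|\le\Bigl(\sum_{i}\|PE(b_{i})P-PE(a_{i})P\|\Bigr)^{1/2}\|x\|,
\]
where $F(\lambda):=PE(\lambda)\omega_{n}(H)x$. The norm-absolute-continuity of $\lambda\mapsto PE(\lambda)P$ from Definition \ref{def5.1.1} then makes the right-hand side arbitrarily small. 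Part (ii) follows immediately: Hilbert spaces have the Radon-Nikodym property, so a locally absolutely continuous $\mathcal H$-valued function is a.e.\ differentiable and is the Bochner integral of its derivative.

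Part (iii) is the technical heart. Applying the operator inequality on $[\lambda,\lambda+h]$ with $y=\omega_{n}(H)x$ and inserting the $P_{ac}(H)$-substitution,
\[
\|F(\lambda+h)-F(\lambda)\|^{2}\le\bigl(\Psi(\lambda+h)-\Psi(\lambda)\bigr)\int_{\lambda}^{\lambda+h}\omega_{n}(s)^{2}\,d\langle E(s)P_{ac}(H)x,P_{ac}(H)x\rangle.
\]
Dividing by $h^{2}$ and letting $h\to 0$ at a common Lebesgue point of $\Psi'$ and of $\rho(\lambda):=d\langle E(\lambda)P_{ac}(H)x,P_{ac}(H)x\rangle/d\lambda$ (a set of full Lebesgue measure) gives $\|F'(\lambda)\|^{2}\le\Psi'(\lambda)\,\omega_{n}(\lambda)^{2}\,\rho(\lambda)$. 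Since $\omega_{n}\in\{0,1\}$ and $\Psi'\le n$ on the support of $\omega_{n}$, the estimate (iii) follows.

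Parts (iv)--(vi) are then straightforward. Integrating the square of the (iii)-bound yields $\int\|F'\|^{2}\,d\lambda\le n\int\rho\,d\lambda\le n\|x\|^{2}$, while Cauchy-Schwarz on the compact support of $\omega_{n}$ (Lebesgue measure $\le 2n$) gives the $L^{1}$-bound. For (v), local absolute continuity of $F$ turns the Lebesgue-Stieltjes expression $Pe^{-itH}\omega_{n}(H)x=\int e^{-it\lambda}\,dF(\lambda)$ into $\int e^{-it\lambda}F'(\lambda)\,d\lambda$, with integrand in $L^{1}\cap L^{2}$ by (iv). Part (vi) is the Plancherel identity for the $\mathcal H$-valued Fourier transform applied to $F'$, combined with the $L^{2}$-bound in (iv). The main obstacle is (iii): the substitution $x\rightsquigarrow P_{ac}(H)x$ is forced in order to be allowed to differentiate the scalar spectral measure, and one must carefully identify the full-measure set on which both $\Psi'$ and $\rho$ exist so that the difference-quotient passage to the derivative is legitimate.
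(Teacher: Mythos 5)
Your proposal is correct and follows essentially the same route as the paper: the reduction $P=PP_{ac}(H)$ so that the scalar spectral measure becomes Lebesgue-absolutely continuous, the Cauchy--Schwarz splitting of $\|P(E(b)-E(a))\omega_n(H)x\|$ against the variation function $\Psi$, the Radon--Nikodym property of $\mathcal H$ for (ii), Lebesgue differentiation for the pointwise bound (iii) (where the paper invokes Lemma \ref{lemma3.4.2} to insert the factor $\omega_n(\lambda)$, which your direct differentiation of $\int_\lambda^{\lambda+h}\omega_n\,d\langle E P_{ac}(H)x,P_{ac}(H)x\rangle$ accomplishes equivalently), and Plancherel for (v)--(vi). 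The only cosmetic caveat is that $\Psi(b)-\Psi(a)$ should be read as $V_{[a,b]}(P)$ (i.e.\ with absolute values, and with care for intervals straddling $0$), which is immaterial for the a.e.\ limit $h\to 0$.
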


\begin{proof}
(i) \ Recall $P_{ac}(H)$ is the projection from $\mathcal{H}$ onto
$\mathcal{H}_{ac}(H)$. Notice that $P_{ac}(H)$ commutes with $E(\lambda)$ .
Moreover, from Lemma \ref{lemma4.1.2}, $P=PP_{ac}(H)$. Assume that
$\{(a_{i},b_{i})\}$ is a finite family of disjoint intervals in $[a,b]$. Then
\[
\begin{aligned}
&\sum_i \| P  (E(b_i)-E(a_i))\omega_{n}(H) x \| =\sum_i \| P  (E(b_i)-E(a_i))P_{ac}(H) \omega_{n}(H) x  \|  \\ &\qquad    \le\sum_i\|P (E(b_i)-E(a_i))\|  \|  (E(b_i)-E(a_i))P_{ac}(H) \omega_{ n}(H) x\|\\
&\qquad     \le \left (\sum_i \|P (E(b_i)-E(a_i))  \|^2 \right )^{1/2} \left (\sum_i\| (E(b_i)-E(a_i))P_{ac}(H) \omega_{ n}(H) x\|^2 \right )^{1/2}\\ &\qquad     =  \left (\sum_i\|P(E(b_i)-E(a_i))  P\|  \right )^{1/2}    \\ & \qquad\qquad  \qquad  \cdot \left (\sum_i \langle (E(b_i)-E(a_i))\big( P_{ac}(H)\omega_{ n}(H) x\big), \big( P_{ac}(H)\omega_{ n}(H) x\big ) \rangle       \right )^{1/2}.
\end{aligned}
\]
Now the result follows from the fact that $P_{ac}(H)\omega_{ n}(H)
x\in\mathcal{H}_{ac}(H)$ and $P\in\mathscr P_{ac}^{\infty}(H)$.

(ii) The first statement follows from (i) and the Radon-Nikodym
Property of the Hilbert space $\mathcal{H}$ (see Definition 1.2.5 in
\cite{Arendt}). The second statement follows (i) and the first
statement (see Proposition 1.2.3 in \cite{Arendt}).

(iii) We have, from (i),
\[
\begin{aligned}
\left \|\frac {     P (E(\lambda)-E(\mu)) \omega_n(H)x }{ \lambda-\mu }\right \| & \le \frac{ \|(E(\lambda)-E(\mu)) P_{ac}(H) \omega_n(H)x  \| \|P(E(\lambda)-E(\mu))   \|}{| \lambda-\mu| }\\
& = \frac{  \left \|(E(\lambda)-E(\mu)) P_{ac}(H) \omega_n(H)x \right  \| }  {  \sqrt{|\lambda-\mu |} }  \left \| \frac{ P (E(\lambda)-E(\mu))  P }{  \lambda-\mu  }\right \|^{1/2}\\
&\le \sqrt{\left  | \frac{   \langle (E(\lambda)-E(\mu)) \omega_{ n}(H)P_{ac}(H) x, P_{ac}(H) x\rangle }  {  \lambda -\mu} \right  |}  \left  | \frac{ \Psi(\lambda)-\Psi(\mu) }{  \lambda-\mu  }\right  |^{1/2}.
\end{aligned}
\]
Hence, by the definitions of $\omega_{n}$ and $\Psi$, we obtain
\begin{align}
\left\|  \frac{d \big( PE(\lambda)\omega_{n}(H)x \big)}{d\lambda}\right\|   &
\le\sqrt{\left|  \frac{ d \langle E(\lambda) \omega_{ n}(H)P_{ac}(H) x,
P_{ac}(H) x\rangle} { d\lambda} \right|  }\sqrt{\left|  \Psi^{\prime}(\lambda)
\right|  } \quad a.e.\nonumber\\
&  \le\omega_{ n}(\lambda) \sqrt{\left|  \frac{ d \langle E(\lambda)
P_{ac}(H)x, P_{ac}(H) x\rangle} { d\lambda} \right|  }\sqrt{n} \quad a.e.
\tag{by Lemma \ref{lemma3.4.2}}%
\end{align}

(iv) We have
\begin{align}
\int_{\mathbb{R}} \left\|  \frac{d \big( PE(\lambda)\omega_{n}(H) x
\big)}{d\lambda}\right\|  ^{2} d\lambda &  \le\int_{\mathbb{R}} \omega_{
n}(\lambda) \cdot{\frac{d\langle E(\lambda) P_{ac}(H)x, P_{ac}(H)x \rangle
}{d\lambda}} \cdot{n} \ d\lambda\tag{by (iii)}\\
&  \le{n } \|P_{ac}(H)x\|^{2} \ \le{n } \| x\|^{2}. \tag{by (\ref{equ3.1})}%
\end{align}
Similarly,
\begin{align}
\int_{\mathbb{R}} \left\|  \frac{d \big( PE(\lambda)\omega_{n}(H) x
\big)}{d\lambda}\right\|  d\lambda &  \le\int_{\mathbb{R}} \omega_{ n}%
(\lambda)\sqrt{\left|  \frac{ d \langle E(\lambda) P_{ac}(H)x, P_{ac}(H)
x\rangle} { d\lambda} \right|  }\sqrt{n} \ d\lambda\tag{by (iii)}\\
&  \le\left(  \int_{\mathbb{R}} \omega_{ n}(\lambda) \cdot{n} \ d\lambda
\right)  ^{1/2} \left(  \int_{\mathbb{R}} {\frac{d\langle E(\lambda)
P_{ac}(H)x, P_{ac}(H)x \rangle}{d\lambda}} \ d\lambda\right)  ^{1/2}%
\nonumber\\
&  <\infty\nonumber
\end{align}

(v) It follows from (i), (ii) and (iv) that the mapping
\[
\lambda\mapsto e^{-it\lambda} \frac{d \big( PE(\lambda)\omega_{ n}(H) x
\big)}{d\lambda}%
\]
is in $L^{1}(\mathbb{R}, \mathcal{H})\cap L^{2}(\mathbb{R}, \mathcal{H}) $ for
each $t\in\mathbb{R}$. Obviously, $Pe^{-itH}\omega_{{n}}(H) x \in\mathcal{H}.$
We need only to verify that, for all $y\in\mathcal{H}$,
\[
\langle Pe^{-itH}\omega_{{n}}(H) x, y\rangle= \langle\left(  \int_{\mathbb{R}}
e^{-it\lambda} \frac{d \big( PE(\lambda)\omega_{ n}(H) x \big)}{d\lambda}
d\lambda\right)  , y\rangle.
\]
In fact,
\[
\begin{aligned}
\langle  \left ( \int_{\mathbb R} e^{-it\lambda} \frac {d \big( PE(\lambda)\omega_{n}(H) x \big)}{d\lambda } d\lambda \right ), y\rangle
&  =  \int_{\mathbb R} \langle  \left  (e^{-it\lambda} \frac {d \big( PE(\lambda)\omega_{n}(H) x \big)}{d\lambda } \right ), y\rangle d\lambda
\\
&  =  \int_{\mathbb R} e^{-it\lambda} \frac {d  \langle   PE(\lambda)\omega_{n}(H) x , y\rangle  }{d\lambda } d\lambda \\
&  =  \int_{\mathbb R} e^{-it\lambda} \frac {d  \langle     E(\lambda)\omega_{n}(H) x ,  Py\rangle  }{d\lambda } d\lambda \\
& =\langle e^{-itH}\omega_{{n}}(H) x, Py\rangle\\
&  =\langle Pe^{-itH}\omega_{{n}}(H) x, y\rangle.
\end{aligned}
\]
Hence the result holds.

(vi) By Plancherel's Theorem for Fourier transformation on Hilbert spaces (see
Theorem 1.8.1 in \cite{Arendt}), we have
\begin{align}
\int_{\mathbb{R}} \|Pe^{-i\lambda H}\omega_{{n}}(H) x\|^{2} d\lambda &
=\frac1 {2\pi} \int_{\mathbb{R}} \left\|  \frac{d \big( PE(\lambda)\omega
_{n}(H) x \big)}{d\lambda}\right\|  ^{2} d\lambda\le\frac{n } {2\pi} \|
x\|^{2}. \tag{by (v) and (iv)}%
\end{align}

\end{proof}

\begin{proposition}
\label{proposition4.2.6} Suppose  $H$ is a  self-adjoint element in
$\mathscr A(\mathcal{M})$. Assume $P\in\mathscr P_{ac}^{\infty}(H)$
and let $\omega_{n}(\lambda)$ be as in Definition \ref{def4.1.5} for
each $n\in\mathbb{N}$.

If $A\in\mathcal{M}\cap L^{2}(\mathcal{M},\tau)$, then
\[
\int_{\mathbb{R}} \|Ae^{-i\lambda H}\omega_{{n}}(H) P\|_{2}^{2} d\lambda
\le\frac{n } {2\pi} \|A\|_{2}^{2}.
\]

\end{proposition}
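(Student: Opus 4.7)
The plan is to reduce the inequality to Lemma~\ref{lemma4.2.3}(vi) by first passing to the adjoint and then disintegrating the Hilbert--Schmidt norm along the sequence of vectors produced by Lemma~\ref{lemma3.1.1}(ii). The asymmetry to overcome is that Lemma~\ref{lemma4.2.3}(vi) controls $\int\|Pe^{-i\lambda H}\omega_n(H)x\|^2\,d\lambda$ with a vector $x$ on the right, while the present statement has a bounded operator $A$ on the left. Taking adjoints and using the commutation of $\omega_n(H)$ with $e^{i\lambda H}$ is exactly what converts one into the other.

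First, since the $\|\cdot\|_2$-norm is $\ast$-invariant and $e^{i\lambda H}\omega_n(H)=\omega_n(H)e^{i\lambda H}$, I would write
\[
\|Ae^{-i\lambda H}\omega_n(H)P\|_2 = \|Pe^{i\lambda H}\omega_n(H)A^*\|_2,
\]
and after the change of variable $\mu=-\lambda$, the integral in question becomes $\int_{\mathbb{R}}\|Pe^{-i\mu H}\omega_n(H)A^*\|_2^2\,d\mu$. Next, applying Lemma~\ref{lemma3.1.1}(ii) to the operator $X_\mu := Pe^{-i\mu H}\omega_n(H)A^*\in\mathcal{M}$, I would expand
\[
\|X_\mu\|_2^2 = \tau(X_\mu^*X_\mu) = \sum_m \|Pe^{-i\mu H}\omega_n(H)(A^*x_m)\|^2.
\]
Because each summand is nonnegative, Tonelli lets me interchange the sum and the $\mu$-integral. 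Each resulting integral is exactly of the form bounded in Lemma~\ref{lemma4.2.3}(vi), with the vector $x=A^*x_m\in\mathcal{H}$, yielding $\int_{\mathbb{R}}\|Pe^{-i\mu H}\omega_n(H)(A^*x_m)\|^2\,d\mu\le \frac{n}{2\pi}\|A^*x_m\|^2$. Summing and using Lemma~\ref{lemma3.1.1}(ii) once more, this time applied to $X=A^*$, together with the trace identity $\tau(AA^*)=\tau(A^*A)$, gives
\[
\sum_m \|A^*x_m\|^2 = \sum_m \langle AA^* x_m,x_m\rangle = \tau(AA^*) = \|A\|_2^2,
\]
which closes the estimate.

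I do not anticipate any genuine obstacle: the steps are the $\ast$-invariance of $\|\cdot\|_2$, the commutativity of $\omega_n(H)$ and $e^{i\lambda H}$, a routine Tonelli interchange, and two applications of Lemma~\ref{lemma3.1.1}(ii). The only point that deserves a moment of care is verifying that $X_\mu$ really lies in $\mathcal{M}$ so that Lemma~\ref{lemma3.1.1}(ii) applies (it does, since $P,\omega_n(H),e^{-i\mu H}\in\mathcal{M}$ and $A^*\in\mathcal{M}$), and that $A^*x_m\in\mathcal{H}$ so that Lemma~\ref{lemma4.2.3}(vi) applies (immediate from $A^*\in\mathcal{M}$).
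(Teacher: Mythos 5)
Your proof is correct and follows essentially the same route as the paper: pass to the adjoint using the $\ast$-invariance of $\|\cdot\|_2$ and the commutation of $\omega_n(H)$ with $e^{i\lambda H}$, expand the Hilbert--Schmidt norm via the vectors from Lemma \ref{lemma3.1.1}(ii), interchange sum and integral, and apply Lemma \ref{lemma4.2.3}(vi) termwise. The only cosmetic difference is that the paper relabels $A^*$ as $A$ after the reduction, whereas you carry $A^*$ through to the end.
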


\begin{proof}
Note that $e^{-i\lambda H}$ commutes with $\omega_{{n}}%
(H)$. As
\[
\|X\|_{2}^{2}=\tau(X^{*}X)=\tau(XX^{*})=\|X^{*}\|_{2}^{2}, \quad\qquad
\forall\ X\in\mathcal{M},
\]
it suffices to show that
\[
\int_{\mathbb{R}} \|Pe^{-i\lambda H}\omega_{{n}}(H) A\|_{2}^{2} \ d\lambda
\le\frac{n } {2\pi} \|A\|_{2}^{2}.
\]
By Lemma \ref{lemma3.1.1}, there exists a sequence $\{x_{m}\}_{m\in\mathbb{N}%
}$ of vectors in $\mathcal{H}$ such that
\begin{equation}
\|X\|_{2}^{2}=\tau(X^{*}X)= \sum_{m} \langle X^{*}X x_{m}, x_{m}\rangle=
\sum_{m} \|Xx_{m}\|^{2},\qquad\forall\ X\in\mathcal{M}. \label{equ5.1}%
\end{equation}
By Lemma \ref{lemma4.2.3} (vi), for all $m\in\mathbb{N}$,
\begin{equation}
\int_{\mathbb{R}} \|Pe^{-i\lambda H}\omega_{{n}}(H)A x_{m}\|^{2} d\lambda
\le\frac{n } {2\pi} \|Ax_{m}\|^{2}. \label{equ5.2}%
\end{equation}
Thus
\begin{align}
\int_{\mathbb{R}} \|Pe^{-i\lambda H}\omega_{{n}}(H) A\|_{2}^{2} \ d\lambda &
= \int_{\mathbb{R}} \sum_{m} \|Pe^{-i\lambda H}\omega_{{n}}(H) Ax_{m}\|^{2}
\ d\lambda\tag{by (\ref{equ5.1})}\\
&  = \sum_{m} \int_{\mathbb{R}} \|Pe^{-i\lambda H}\omega_{{n}}(H) Ax_{m}\|^{2}
\ d\lambda\nonumber\\
&  \le\sum_{m} \frac{n } {2\pi} \|Ax_{m}\|^{2}\tag{by (\ref{equ5.2})}\\
&  = \frac{n } {2\pi} \|A\|_{2}^{2}. \tag{by (\ref{equ5.1})}%
\end{align}
This ends the proof of the lemma.
\end{proof}


\begin{corollary}
\label{cor4.2.7} Suppose  $H$ is a  self-adjoint element in
$\mathscr A(\mathcal{M})$. Let $P\in\mathscr P_{ac}^{\infty}(H)$ and
let $\omega_{n}(\lambda)$ be as in Definition \ref{def4.1.5} for
each $n\in\mathbb{N}$. Assume $\{x_{m}\}_{m\in\mathbb{N}}$ is a
family of vectors in $\mathcal{H}$ such that
\[
\|X\|_{2}^{2}=\tau(X^{*}X)= \sum_{n} \langle X^{*}X x_{m}, x_{m}\rangle=
\sum_{m} \|Xx_{m}\|^{2},\qquad\forall\ X\in\mathcal{M}.
\]
Then, for all $A\in\mathcal{M}\cap L^{2}(\mathcal{M},\tau)$, $S\in\mathcal{M}$
and $m, n\in\mathbb{N}$, we have
\[
\int_{\mathbb{R}} \|ASe^{-i\lambda H}\omega_{{n}}(H) Px_{m}\|^{2} d\lambda
\le\frac{n } {2\pi} \|A\|_{2}^{2} \|S\|^{2}.
\]

\end{corollary}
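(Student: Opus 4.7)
The plan is to reduce this to Proposition~\ref{proposition4.2.6} by absorbing the bounded operator $S$ into the $L^2$-element. Concretely, I would set $T = AS$ and observe that $T\in\mathcal{M}\cap L^2(\mathcal{M},\tau)$ with
\[
\|T\|_2 = \|AS\|_2 \le \|A\|_2 \|S\|,
\]
since $A\in L^2(\mathcal{M},\tau)$ is a two-sided ideal-like element under multiplication by the bounded operator $S$. Then the expression $ASe^{-i\lambda H}\omega_n(H) P$ can be rewritten as $Te^{-i\lambda H}\omega_n(H) P$, which is precisely the object bounded by Proposition~\ref{proposition4.2.6}.

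Next I would use the defining property of the family $\{x_m\}$. For any $X\in\mathcal{M}$ one has
\[
\|Xx_m\|^2 \le \sum_{k}\|Xx_k\|^2 = \tau(X^*X) = \|X\|_2^2.
\]
Applying this pointwise in $\lambda$ to $X = X(\lambda) \triangleq ASe^{-i\lambda H}\omega_n(H) P$ gives the inequality
\[
\|ASe^{-i\lambda H}\omega_n(H) Px_m\|^2 \le \|ASe^{-i\lambda H}\omega_n(H) P\|_2^2.
\]

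Finally I would integrate over $\mathbb{R}$. Both sides are nonnegative and measurable, so monotonicity of the integral yields
\[
\int_{\mathbb R}\|ASe^{-i\lambda H}\omega_n(H) Px_m\|^2 d\lambda \le \int_{\mathbb R}\|ASe^{-i\lambda H}\omega_n(H) P\|_2^2 d\lambda.
\]
Now Proposition~\ref{proposition4.2.6}, applied with $AS$ in place of $A$, bounds the right-hand side by $\frac{n}{2\pi}\|AS\|_2^2$, and the submultiplicativity $\|AS\|_2\le \|A\|_2\|S\|$ completes the estimate. No serious obstacle is expected here; the only thing to verify is that $AS\in\mathcal{M}\cap L^2(\mathcal{M},\tau)$ with the stated norm bound, which is a standard property of the noncommutative $L^2$-space as a bimodule over $\mathcal{M}$.
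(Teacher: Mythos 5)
Your argument is correct and is essentially identical to the paper's proof: bound $\|Xx_m\|^2$ by $\|X\|_2^2$ using the defining property of $\{x_m\}$, apply Proposition~\ref{proposition4.2.6} to $AS\in\mathcal{M}\cap L^2(\mathcal{M},\tau)$, and finish with $\|AS\|_2\le\|A\|_2\|S\|$. No differences worth noting.
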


\begin{proof}
It follows from Proposition \ref{proposition4.2.6} and the choice of
$\{x_{m}\}_{m\in\mathbb{N}}$ that
\[
\int_{\mathbb{R}} \|ASe^{-i\lambda H}\omega_{{n}}(H) Px_{m}\|^{2} d\lambda
\le\int_{\mathbb{R}} \|ASe^{-i\lambda H}\omega_{{n}}(H) P\|_{2}^{2}
d\lambda\le\frac{n } {2\pi} \|AS\|_{2}^{2} \le\frac{n } {2\pi} \|A\|_{2}^{2}
\|S\|^{2}.
\]

\end{proof}

Now we are ready to state the main result of this section.

\begin{proposition}
\label{prop4.3.2} Suppose  $H$ is a  self-adjoint element in
$\mathscr A(\mathcal{M})$. Let $P\in\mathscr P_{ac}^{\infty}(H)$.
Assume $\{x_{m}\}_{m\in\mathbb{N}}$ is a family of vectors in
$\mathcal{H}$ such that
\[
\|X\|_{2}^{2}=\tau(X^{*}X)= \sum_{m} \langle X^{*}X x_{m}, x_{m}\rangle=
\sum_{m} \|Xx_{m}\|^{2},\qquad\forall\ X\in\mathcal{M}.
\]
Then there exists an increasing sequence $\{Q_{n}\}_{n\in\mathbb{N}}$ of
projections in $\mathcal{M}$ such that

\begin{enumerate}
\item[(i)] $Q_{n}$ converges to $I$ in strong operator topology as $n\rightarrow\infty$.

\item[(ii)] If $A\in\mathcal{M}\cap L^{2}(\mathcal{M},\tau)$ and $m, n
\in\mathbb{N}$, then $(A, Q_{n}Px_{m})$ is $\mathcal{M}$-$H$-smooth.
\end{enumerate}
\end{proposition}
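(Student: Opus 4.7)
The plan is to take $Q_n = \omega_n(H)$ for each $n \in \mathbb{N}$, where $\omega_n$ is the cut-off function from Definition \ref{def4.1.5}. Since $\omega_n$ is the indicator function of the Borel set
\[
\Delta_n = \{\lambda \in \mathbb{R} : |\Psi'(\lambda)| \le n \text{ and } |\lambda| \le n\},
\]
Borel functional calculus identifies $Q_n = \omega_n(H) = E(\Delta_n)$ as a spectral projection of $H$, so in particular $Q_n \in \mathcal{M}$.

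For part (i), both defining conditions of $\Delta_n$ weaken as $n$ increases, so $\Delta_n \subseteq \Delta_{n+1}$. Hence $\omega_n(\lambda) \le \omega_{n+1}(\lambda)$ pointwise, which by functional calculus gives $Q_n \le Q_{n+1}$ in the projection order; thus $\{Q_n\}$ is increasing. The strong-operator convergence $Q_n \to I$ is then exactly the conclusion of Lemma \ref{lemma5.2.2.1}.

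For part (ii), I would invoke Corollary \ref{cor4.2.7} directly. Given $A \in \mathcal{M} \cap L^2(\mathcal{M}, \tau)$, $m, n \in \mathbb{N}$, and any $S \in \mathcal{M}$, substituting $Q_n = \omega_n(H)$ yields
\[
\int_{\mathbb{R}} \|ASe^{-i\lambda H} Q_n P x_m\|^2 \, d\lambda \;=\; \int_{\mathbb{R}} \|ASe^{-i\lambda H} \omega_n(H) P x_m\|^2 \, d\lambda \;\le\; \frac{n}{2\pi}\, \|A\|_2^{\,2}\, \|S\|^2.
\]
With the constant $c = \sqrt{n/(2\pi)}\, \|A\|_2$ (which is independent of $S$), this is precisely condition (ii) of Definition \ref{def4.2.8}, so $(A, Q_n P x_m)$ is $\mathcal{M}$-$H$-smooth.

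No real obstacle is anticipated: the proposition is essentially a repackaging of Corollary \ref{cor4.2.7} once one recognizes that the cut-off $\omega_n(H)$ is itself a spectral projection of $H$, and that the family $\{Q_n\}$ inherits its monotonicity and SOT-convergence directly from the monotone family $\{\Delta_n\}$ of Borel sets exhausting $\mathbb{R}$ up to a null set. The only verification required is the observation that the product $\omega_n(H) P x_m$ appearing in Corollary \ref{cor4.2.7} is exactly $Q_n P x_m$, which is immediate from the choice $Q_n := \omega_n(H)$.
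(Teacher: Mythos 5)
Your proposal is correct and follows exactly the paper's argument: the paper likewise sets $Q_n=\omega_n(H)$ and cites Lemma \ref{lemma5.2.2.1}, Corollary \ref{cor4.2.7}, and Definition \ref{def4.2.8}. You merely spell out the details (monotonicity of the sets $\Delta_n$ and the explicit constant $c=\sqrt{n/(2\pi)}\,\|A\|_2$) that the paper leaves implicit.
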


\begin{proof}
Let $Q_{n}=\omega_{n}(H)$ be as in Definition \ref{def4.1.5} for each
$n\in\mathbb{N}$. Now the result follows from Lemma \ref{lemma5.2.2.1},
Corollary \ref{cor4.2.7} and Definition \ref{def4.2.8}.
\end{proof}

\section{Existence of Generalized Wave Operator in Semifinite von Neumann Algebras}

Let $\mathcal{H}$ be a complex Hilbert space and let $\mathcal{B}%
(\mathcal{H})$ be the set of all bounded linear operators on
$\mathcal{H}$. Let $\mathcal{M}\subseteq\mathcal{B}(\mathcal{H})$ be
a countably decomposable, properly infinite von Neumann algebra with
a faithful normal semifinite tracial weight $\tau$. Let $\mathscr
A(\mathcal{M})$ be the set of densely defined, closed operators that
are affiliated with $\mathcal{M}$. Let $\mathscr P_{ac}^{\infty}(H)$
be the set of norm absolutely continuous projections with respect to
$H$ in $\mathcal{M}$.

\subsection{Generalized  wave operators}

\begin{theorem}
\label{thm5.0.3} Suppose $H$ and $H_{1}$ are  self-adjoint elements in
$\mathscr A(\mathcal{M})$. Assume $J$ is in $\mathcal{M}$ such that
$J\mathcal{D}(H)\subseteq\mathcal{D}(H_{1})$ and the closure of
$H_{1}J-JH$ is in $\mathcal{M}\cap L^{1}(\mathcal{M},\tau)$. Let
$W(t)=e^{ itH_{1}}Je^{-itH}$ for each $t\in\mathbb{R}$.

If $P\in\mathscr P_{ac}^{\infty}(H)$, then
$W(t) P$ converges in strong operator
topology in $\mathcal{M}$ as $t\rightarrow\infty$.

\end{theorem}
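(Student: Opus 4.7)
The plan is to pass from the pointwise convergence in Proposition \ref{thm5.0.2} to strong operator convergence of $W(t)P$ by means of a density argument that works precisely because everything stays inside $\mathcal M$. Write $B$ for the closure of $H_1J-JH$, which by hypothesis lies in $\mathcal M\cap L^1(\mathcal M,\tau)$, so that $|B|^{1/2}\in\mathcal M\cap L^2(\mathcal M,\tau)$. Since $W(t)P\in\mathcal M$ with $\|W(t)P\|\le\|J\|$ uniformly in $t$, and since $\mathcal M$ is closed in the strong operator topology, it suffices to exhibit a total subset of $\mathcal H$ on which $\{W(t)Py\}_t$ converges in norm.

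For the total subset, I would invoke Lemma \ref{lemma3.1.1}(ii) to fix a sequence $\{x_m\}_{m\in\mathbb N}\subset\mathcal H$ whose $\mathcal M'$-translates have dense linear span in $\mathcal H$. The crucial observation is that $W(t)P=e^{itH_1}Je^{-itH}P$ belongs to $\mathcal M$ and therefore commutes with every $A'\in\mathcal M'$; consequently $W(t)PA'x_m=A'W(t)Px_m$, and convergence of $\{A'W(t)Px_m\}_t$ follows (with $A'$ fixed and bounded) from convergence of $\{W(t)Px_m\}_t$. This reduces the theorem to proving that, for each $m$, the net $\{W(t)Px_m\}_t$ is Cauchy in $\mathcal H$ as $t\to\infty$.

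Next I would apply Proposition \ref{prop4.3.2} to $P\in\mathscr P_{ac}^\infty(H)$ and the vectors $\{x_m\}$, obtaining an increasing sequence $\{Q_n\}_{n\in\mathbb N}$ of projections in $\mathcal M$ with $Q_n\to I$ in SOT and with the key property that, taking $A=|B|^{1/2}\in\mathcal M\cap L^2(\mathcal M,\tau)$, each pair $(|B|^{1/2},Q_nPx_m)$ is $\mathcal M$-$H$-smooth. Proposition \ref{thm5.0.2} then yields, for each fixed $m$ and $n$, the existence of $\lim_{t\to\infty}W(t)Q_nPx_m$ in $\mathcal H$. Given $\epsilon>0$, the splitting
\[
W(t)Px_m-W(s)Px_m=[W(t)-W(s)](I-Q_n)Px_m+[W(t)-W(s)]Q_nPx_m
\]
lets me first choose $n$ large so that $2\|J\|\cdot\|(I-Q_n)Px_m\|<\epsilon/2$, and then choose $N$ so that $\|[W(t)-W(s)]Q_nPx_m\|<\epsilon/2$ whenever $t,s>N$, which delivers the desired Cauchy property.

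The main conceptual obstacle in this plan is not the final $\epsilon/2$-splitting but the manufacture of the $\mathcal M$-$H$-smooth pairs $(|B|^{1/2},Q_nPx_m)$, which was prepared in the previous section through Proposition \ref{prop4.3.2}; once that machinery is in place, the passage from pointwise convergence to strong operator convergence of $W(t)P$ uses only the uniform bound $\|W(t)P\|\le\|J\|$, the density of $\mathcal M'\{x_m\}$ from Lemma \ref{lemma3.1.1}, and the strong convergence $Q_n\to I$.
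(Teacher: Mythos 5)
Your proposal is correct and follows essentially the same route as the paper: Lemma \ref{lemma3.1.1} for the generating vectors $\{x_m\}$, Proposition \ref{prop4.3.2} for the cut-off projections $Q_n$ yielding the $\mathcal M$-$H$-smooth pairs $(|B|^{1/2},Q_nPx_m)$, Proposition \ref{thm5.0.2} for convergence of $W(t)Q_nPx_m$, and then the uniform bound $\|W(t)P\|\le\|J\|$ together with $Q_n\to I$ and the commutation with $\mathcal M'$ to upgrade to strong operator convergence of $W(t)P$. The only difference is cosmetic: you spell out the $\epsilon/2$-splitting that the paper leaves implicit in the phrase ``since $Q_n$ converges to $I$ in strong operator topology and $W(t)$ is uniformly bounded.''
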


\begin{proof}
By Lemma \ref{lemma3.1.1}, there exists a family $\{x_{m}\}_{m\in\mathbb{N}}$
of vectors in $\mathcal{H}$ such that
\[
\|X\|_{2}^{2}=\tau(X^{*}X)= \sum_{m} \langle X^{*}X x_{m}, x_{m}\rangle=
\sum_{m} \|Xx_{m}\|^{2},\qquad\forall\ X\in\mathcal{M}.
\]
Note $P\in\mathscr P_{ac}^{\infty}(H)$. By Proposition
\ref{prop4.3.2}, there exists an increasing sequence
$\{Q_{n}\}_{n\in\mathbb{N}}$ of projections in $\mathcal{M}$ such
that (a) $Q_{n}$ converges to $I$ in strong operator
topology as $n\rightarrow \infty$;
and (b) $(A, Q_{n}Px_{m})$ is $\mathcal{M}$-$H$-smooth for all
$A\in\mathcal{M}\cap L^{2}(\mathcal{M},\tau)$ and $m, n
\in\mathbb{N}$. In particular, $(|H_{1}J-JH|^{1/2}, Q_{n}Px_{m})$ is
$\mathcal{M}$-$H$-smooth for all $m, n\in\mathbb{N}$. By Theorem
\ref{thm5.0.2}, $W(t) Q_{n}Px_{m}$ converges in $\mathcal{H}$ as
$t\rightarrow\infty$. Since $Q_{n}$ converges to $I$ in strong operator
topology and
$W(t)$ is uniformly bounded, $W(t) Px_{m}$ converges in
$\mathcal{H}$ as $t\rightarrow\infty$. This further implies that
$W(t) PA^{\prime}x_{m} = A^{\prime} W(t) Px_{m}$ converges in
$\mathcal{H}$, as $t\rightarrow\infty$, for all
$A^{\prime}\in\mathcal{M}^{\prime}$ and $m\in\mathbb{N}$. By Lemma
\ref{lemma3.1.1}, $W(t) Px $ converges in $\mathcal{H}$ for all
$x\in \mathcal{H}$, whence $W(t) P$ converges in strong operator
topology in
$\mathcal{M}$ as $t\rightarrow\infty$.
\end{proof}

\begin{proposition}
\label{thm5.0.4} Let $H$ and $H_{1}$ be  self-adjoint elements in
$\mathscr A(\mathcal{M})$. Suppose that $H_{1} - H$ is in
$\mathcal{M}\cap L^{1}(\mathcal{M},\tau)$. If $P\in\mathscr
P_{ac}^{\infty}(H)$, then
$e^{itH_{1}}e^{-itH} P$ converges in strong operator
topology in $\mathcal{M}$ as $t\rightarrow
\infty$.

\end{proposition}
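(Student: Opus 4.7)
The plan is to deduce this statement as a direct specialization of Theorem \ref{thm5.0.3}, taking the identification operator $J$ to be the identity $I \in \mathcal{M}$. With this choice, $W(t) = e^{itH_{1}} I e^{-itH} = e^{itH_{1}} e^{-itH}$, which is precisely the operator we need to analyze.

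First, I would verify that the hypotheses of Theorem \ref{thm5.0.3} are satisfied with $J = I$. The domain condition $J\mathcal{D}(H) \subseteq \mathcal{D}(H_{1})$ reduces to $\mathcal{D}(H) \subseteq \mathcal{D}(H_{1})$, and this is forced by the assumption $H_{1} - H \in \mathcal{M}$: for any $x \in \mathcal{D}(H)$, the vector $H_{1}x = Hx + (H_{1}-H)x$ is well-defined because $H_{1}-H$ is bounded, so $x \in \mathcal{D}(H_{1})$. By symmetry one actually has $\mathcal{D}(H) = \mathcal{D}(H_{1})$, but only the one inclusion is needed.

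Second, the operator $H_{1}J - JH = H_{1} - H$ initially defined on $\mathcal{D}(H)$ is the restriction of a bounded operator in $\mathcal{M} \cap L^{1}(\mathcal{M},\tau)$, so its closure lies in $\mathcal{M} \cap L^{1}(\mathcal{M},\tau)$ as required by the hypothesis of Theorem \ref{thm5.0.3}. With all hypotheses verified, applying Theorem \ref{thm5.0.3} to the pair $(H,H_{1})$ with $J = I$ and to any $P \in \mathscr{P}_{ac}^{\infty}(H)$ yields that $e^{itH_{1}}e^{-itH} P = W(t)P$ converges in the strong operator topology in $\mathcal{M}$ as $t \to \infty$, which is the desired conclusion.

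Since the proof is a pure specialization, there is no substantive obstacle; the only point requiring attention is the domain verification, which follows immediately from the boundedness of $H_{1} - H$. In particular, no further $\mathcal{M}$-$H$-smoothness or cut-off arguments need to be redone, as all of that machinery is already absorbed into Theorem \ref{thm5.0.3}.
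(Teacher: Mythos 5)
Your proposal is correct and matches the paper's proof exactly: the paper also obtains this proposition as the special case $J=I$ of Theorem \ref{thm5.0.3}. Your additional verification that $\mathcal{D}(H)\subseteq\mathcal{D}(H_1)$ follows from the boundedness of $H_1-H$ is a worthwhile detail the paper leaves implicit.
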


\begin{proof}
The result is a special case of Theorem \ref{thm5.0.3} when $J=I$.
\end{proof}

\subsection{Kato-Rosenblum Theorem in semifinite von Neumann algebras}

Recall $\mathscr P_{ac}^{\infty}(H)$ is the set of norm absolutely continuous
projections with respect to $H$ in $\mathcal{M}$ (see Definition
\ref{def5.1.1}).

\begin{definition}\label{def5.2.1}
\label{def6.2.1} Suppose $H$ is a  self-adjoint element in $\mathscr
A(\mathcal{M})$. Define
\[
P_{ac}^{\infty}(H) = \vee\{P : P\in\mathscr P_{ac}^{\infty}(H) \}.
\]
Such $P_{ac}^{\infty}(H)$ is called the {\em norm absolutely
continuous support} of $H$ in $\mathcal M$.
\end{definition}

\begin{lemma}\label{lemma5.2.2.1-28}
 Suppose $H$ is a  self-adjoint element in $\mathscr
A(\mathcal{M})$. Then $P_{ac}^{\infty}(H) \le  P_{ac} (H)$. Furthermore, if $\mathcal H$ is   separable  and $\mathcal M=\mathcal B(\mathcal H)$, then $P_{ac}^{\infty}(H) =  P_{ac} (H)$.
\end{lemma}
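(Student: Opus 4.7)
The plan is to deduce both parts from results already established in the excerpt, namely Lemma \ref{lemma4.1.2} for the inequality in the general semifinite setting and Proposition \ref{example5.1.3} for the reverse inequality in the type~I factor case.

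For the first assertion, I would invoke Lemma \ref{lemma4.1.2}, which states that every $P \in \mathscr{P}_{ac}^{\infty}(H)$ satisfies $P \le P_{ac}(H)$. Since $P_{ac}(H)$ is itself a projection in $\mathcal{M}$ (recall from the discussion of absolutely continuous spectrum that $P_{ac}(H)$ lies in the von Neumann subalgebra generated by $\{E(\lambda)\}$), it is an upper bound in $\mathcal{M}$ for the family $\mathscr{P}_{ac}^{\infty}(H)$. By Definition \ref{def5.2.1}, $P_{ac}^{\infty}(H)$ is the supremum of this family, so $P_{ac}^{\infty}(H) \le P_{ac}(H)$.

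For the second assertion, with $\mathcal{M} = \mathcal{B}(\mathcal{H})$ and $\mathcal{H}$ separable, I would establish the reverse inequality $P_{ac}(H) \le P_{ac}^{\infty}(H)$. The key observation is Proposition \ref{example5.1.3}: for every unit vector $x \in \mathcal{H}_{ac}(H)$, the rank-one projection $x \otimes x$ lies in $\mathscr{P}_{ac}^{\infty}(H)$, and hence satisfies $x \otimes x \le P_{ac}^{\infty}(H)$. Since $\mathcal{H}$ is separable, the range $\mathcal{H}_{ac}(H)$ of $P_{ac}(H)$ admits a countable orthonormal basis $\{e_n\}_{n \in \mathbb{N}}$. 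The rank-one projections $e_n \otimes e_n$ are mutually orthogonal and satisfy
\[
P_{ac}(H) = \sum_{n} e_n \otimes e_n \quad \text{(sot-convergent)}.
\]
Because each $e_n \otimes e_n \le P_{ac}^{\infty}(H)$, the join of these mutually orthogonal projections also satisfies $P_{ac}(H) \le P_{ac}^{\infty}(H)$. Combined with the first part, equality follows.

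This argument is essentially routine once Proposition \ref{example5.1.3} is in hand; there is no real obstacle. The only subtle point worth flagging is that one must use the separability of $\mathcal{H}$ to extract a countable orthonormal basis of $\mathcal{H}_{ac}(H)$ so that the join of the rank-one projections $e_n \otimes e_n$ in $\mathcal{B}(\mathcal{H})$ is exactly $P_{ac}(H)$; without separability, one would instead need an arbitrary (possibly uncountable) orthonormal basis, which still works in $\mathcal{B}(\mathcal{H})$ but is a minor bookkeeping issue.
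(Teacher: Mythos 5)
Your proof is correct and follows the same route as the paper: Lemma \ref{lemma4.1.2} plus the definition of the join for the inequality $P_{ac}^{\infty}(H)\le P_{ac}(H)$, and Proposition \ref{example5.1.3} applied to the rank-one projections onto vectors of an orthonormal basis of $\mathcal{H}_{ac}(H)$ for the reverse inequality in $\mathcal{B}(\mathcal{H})$. The paper states this proof in one line; you have merely filled in the (routine) details.
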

\begin{proof}
The first statement follows from Lemma \ref{lemma4.1.2} and Definition \ref{def5.2.1}. The second statement follows from a combination of the first statement and  Proposition \ref{example5.1.3}.
\end{proof}
\begin{lemma}
\label{lemma6.2.2} Suppose $H$ is a  self-adjoint element in $\mathscr
A(\mathcal{M})$. Let $\{E(\lambda)\}_{\lambda\in\mathbb{R}}$ be the
spectral resolution of the identity for $H$ in $\mathcal{M}$. If
$S\in\mathcal{M}$ satisfies that the mapping $\lambda\mapsto
S^{*}E(\lambda)S$ from $\mathbb{R}$ into $\mathcal{M}$ is locally
absolutely continuous, then $R(S)$, the range projection of $S$ in
$\mathcal{M}$, is a subprojection of $P_{ac}^{\infty}(H)$.
\end{lemma}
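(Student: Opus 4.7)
My plan is to use the polar decomposition of $S$ together with spectral cutoffs of $|S|$ to exhibit $R(S)$ as the supremum of projections lying in $\mathscr P_{ac}^{\infty}(H)$. Write $S = V|S|$ with $V \in \mathcal M$ a partial isometry satisfying $V^*V = \mathrm{supp}(|S|)$ and $VV^* = R(S)$. For each $n \in \mathbb N$, let $E_n = \chi_{(1/n,\infty)}(|S|) \in \mathcal M$ and let $g_n$ be the bounded Borel function on $[0,\infty)$ defined by $g_n(t) = 1/t$ on $(1/n,\infty)$ and $g_n(t) = 0$ elsewhere, so that $\|g_n(|S|)\| \le n$ and $g_n(|S|)\,|S| = |S|\,g_n(|S|) = E_n$ by functional calculus. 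Set $P_n := V E_n V^* \in \mathcal M$; using $E_n \le V^*V$, one checks $P_n$ is a projection dominated by $VV^* = R(S)$, and since $E_n \uparrow V^*V$ in strong operator topology, $P_n \uparrow R(S)$.

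The key computation is the identity
\[
g_n(|S|)\,S^* E(\lambda) S\,g_n(|S|) \;=\; g_n(|S|)|S|\,\cdot V^* E(\lambda) V\,\cdot |S|\,g_n(|S|) \;=\; E_n V^* E(\lambda) V E_n,
\]
obtained by substituting $S = V|S|$ and $S^* = |S| V^*$. Since $\lambda \mapsto S^* E(\lambda) S$ is locally absolutely continuous by hypothesis, and left/right multiplication by any fixed element of $\mathcal M$ preserves local absolute continuity (via $\|AXB - AYB\| \le \|A\|\|X-Y\|\|B\|$), it follows that $\lambda \mapsto E_n V^* E(\lambda) V E_n$ is locally absolutely continuous; conjugating further by $V$ and $V^*$ gives local absolute continuity of
\[
V E_n V^* E(\lambda) V E_n V^* \;=\; P_n E(\lambda) P_n.
\]
Hence $P_n \in \mathscr P_{ac}^{\infty}(H)$, so $P_n \le P_{ac}^{\infty}(H)$ for every $n$, and passing to the supremum yields $R(S) = \bigvee_n P_n \le P_{ac}^{\infty}(H)$, as required.

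The main subtlety is the need for the cutoff $E_n$: one cannot invert $|S|$ in a bounded manner on the whole of $\mathrm{supp}(|S|)$, so a direct attempt to express $R(S) E(\lambda) R(S)$ as a two-sided bounded multiplication of $S^* E(\lambda) S$ is unavailable. The spectral cutoffs circumvent this obstacle by replacing $R(S)$ with the approximating projections $P_n$ sitting inside the ``$|S| > 1/n$'' part of its range, on which bounded inversion is available; the monotone limit $P_n \uparrow R(S)$ then delivers the conclusion.
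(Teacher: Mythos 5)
Your proposal is correct and follows essentially the same route as the paper: the paper writes $S=|S^*|W$ and takes the spectral cutoffs $|S^*|f_n(|S^*|)=\chi_{(1/n,n)}(|S^*|)$ directly on the range side, while you cut off $|S|$ on the source side and transport by $V$, but the key ideas (bounded inversion of the modulus away from $0$, stability of local absolute continuity under two-sided bounded multiplication, and taking the supremum of the resulting projections) are identical.
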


\begin{proof}
Let $S=|S^{*}|W$ be a polar decomposition of $S$ in $\mathcal{M}$ where $W$ is
a partial isometry and $|S^{*}|$ is a positive operator in $\mathcal{M}$. For
each $n\in\mathbb{N}$, let $f_{n}:\mathbb{R}\rightarrow\mathbb{R}$ be a
function such that $f_{n}(\lambda)=1/\lambda$ when $1/n<\lambda<n$ and $0$
otherwise. It is not hard to check that $|S^{*}|\cdot f_{n}(|S^{*}|)$ is a
projection in $\mathcal{M}$ satisfying the mapping $\lambda\mapsto|S^{*}%
|f_{n}(|S^{*}|) E(\lambda)|S^{*}|f_{n}(|S^{*}|) = f_{n}(|S^{*}|)WS^{*}%
E(\lambda)SW^{*}f_{n}(|S^{*}|)$ from $\mathbb{R}$ into $\mathcal{M}$ is
locally absolutely continuous. Therefore, $|S^{*}|f_{n}(|S^{*}|)\in
\mathscr P_{ac}(H)$ for each $n\in\mathbb{N}$. Notice, when $n\rightarrow
\infty$, $|S^{*}|\cdot f_{n}(|S^{*}|)\rightarrow R(S)$ in strong operator topology. Now we conclude
that $R(S)$ is a subprojection of $P_{ac}^{\infty}(H)$.
\end{proof}

\begin{proposition}
\label{prop6.2.3} Suppose $H$ is a  self-adjoint element in $\mathscr
A(\mathcal{M})$. If $\{E(\lambda)\}_{\lambda\in\mathbb{R}}$ is the
spectral resolution of the identity for $H$ in $\mathcal{M}$ and
$\mathcal{A}$ is the von Neumann subalgebra generated by
$\{E(\lambda)\}_{\lambda \in\mathbb{R}}$ in $\mathcal{M}$, then
$P_{ac}^{\infty}(H)$ is in $\mathcal{A}^{\prime}\cap\mathcal{M}$.
\end{proposition}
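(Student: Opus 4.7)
The plan is to reduce the statement to showing that $Q := P_{ac}^{\infty}(H)$ commutes with each individual spectral projection $E(\mu)$. Since $Q$ is a projection in $\mathcal{M}$ by Definition~\ref{def5.2.1}, and since $\mathcal{A}$ is generated inside $\mathcal{M}$ by the family $\{E(\mu)\}_{\mu \in \mathbb{R}}$, this reduction is automatic. So I would fix $\mu \in \mathbb{R}$, set $F = E(\mu)$, and aim to prove $FQ = QF$.

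The crucial first step is to show that for every $P \in \mathscr P_{ac}^{\infty}(H)$, both range projections $R(FP)$ and $R(F^{\perp}P)$ are majorized by $Q$. For $S = FP$, I would exploit the fact that $F$ commutes with every $E(\lambda)$, which gives $FE(\lambda)F = E(\lambda \wedge \mu)$, so
\[
S^{*}E(\lambda)S = PE(\lambda \wedge \mu)P.
\]
This is simply a truncation of the locally absolutely continuous map $\lambda \mapsto PE(\lambda)P$, and hence is itself locally absolutely continuous; Lemma~\ref{lemma6.2.2} then delivers $R(FP) \le Q$. An analogous calculation with $S = F^{\perp}P$ gives $F^{\perp}E(\lambda)F^{\perp} = E(\lambda) - E(\lambda \wedge \mu)$, so $S^{*}E(\lambda)S = PE(\lambda)P - PE(\lambda\wedge\mu)P$ is a difference of two locally absolutely continuous maps, and a second application of Lemma~\ref{lemma6.2.2} yields $R(F^{\perp}P) \le Q$.

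With this in hand, I would set
\[
Q_{F} = \bigvee_{P \in \mathscr P_{ac}^{\infty}(H)} R(FP), \qquad Q_{F^{\perp}} = \bigvee_{P \in \mathscr P_{ac}^{\infty}(H)} R(F^{\perp}P),
\]
so that $Q_{F} \le F$, $Q_{F^{\perp}} \le F^{\perp}$, and $Q_{F}, Q_{F^{\perp}} \le Q$. The projections $Q_{F}$ and $Q_{F^{\perp}}$ are orthogonal, so $Q_{F} + Q_{F^{\perp}}$ is again a projection and it lies below $Q$. For the reverse inequality, each $P \in \mathscr P_{ac}^{\infty}(H)$ satisfies $P = FP + F^{\perp}P$, whence $P\mathcal{H} \subseteq R(FP)\mathcal{H} + R(F^{\perp}P)\mathcal{H} \subseteq (Q_{F} + Q_{F^{\perp}})\mathcal{H}$ and therefore $P \le Q_{F} + Q_{F^{\perp}}$. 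Taking the supremum over $P$ gives $Q \le Q_{F} + Q_{F^{\perp}}$, hence $Q = Q_{F} + Q_{F^{\perp}}$. Since $Q_{F} \le F$ and $Q_{F^{\perp}} \le F^{\perp}$, this decomposition is orthogonal with respect to $F$ and one reads off $FQ = Q_{F} = QF$.

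The only delicate point is the first step, namely checking that the maps $\lambda \mapsto PE(\lambda \wedge \mu)P$ and $\lambda \mapsto PE(\lambda)P - PE(\lambda \wedge \mu)P$ are locally absolutely continuous, but this is essentially bookkeeping once one uses that $F$ and $E(\lambda)$ are commuting spectral projections of the same self-adjoint operator. The remainder of the argument is a routine lattice-theoretic decomposition, and I anticipate no further serious obstacle.
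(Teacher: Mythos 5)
Your argument is correct, and its engine is the same as the paper's: apply Lemma \ref{lemma6.2.2} to $S=E(\mu)P$ after observing that $S^*E(\lambda)S=PE(\lambda\wedge\mu)P$ is a truncation of the locally absolutely continuous map $\lambda\mapsto PE(\lambda)P$, concluding $R(E(\mu)P)\le Q$ for every $P\in\mathscr P_{ac}^{\infty}(H)$. Where you diverge is in the endgame. The paper stops after this single application: from $R(E(\mu)P)\le Q$ it gets $E(\mu)P=QE(\mu)P$ for all $P$, hence $E(\mu)Q=QE(\mu)Q$ on the closed span of the ranges of the $P$'s, and then simply takes adjoints of the identity $E(\mu)Q=QE(\mu)Q$ (both $E(\mu)$ and $Q$ being self-adjoint) to obtain $QE(\mu)=QE(\mu)Q=E(\mu)Q$. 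You instead run the lemma a second time on $S=E(\mu)^{\perp}P$ and assemble the orthogonal decomposition $Q=Q_F+Q_{F^\perp}$ with $Q_F\le F$, $Q_{F^\perp}\le F^\perp$, from which commutation is read off. Both routes are sound; yours is more transparent geometrically but does strictly more work --- the second half of your first step (the $F^{\perp}$ computation) and the lattice bookkeeping are exactly what the adjoint-symmetrization trick lets the paper skip. If you want to tighten your write-up, note that once you know $E(\mu)P=QE(\mu)P$ for every $P$ in the defining family, the relation $E(\mu)Q=QE(\mu)Q$ follows by density of the span of the ranges of the $P$'s in the range of $Q$, and self-adjointness finishes the proof in one line.
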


\begin{proof}
Let $P\in\mathscr P_{ac}^{\infty}(H)$. Let $a\in\mathbb{R}$. It is not hard to verify
that the mapping $\lambda\mapsto PE(a) E(\lambda) E(a)P$ from $\mathbb{R}$
into $\mathcal{M}$ is locally absolutely continuous. Hence, from Lemma
\ref{lemma6.2.2}, $R(E(a)P)$, the range projection of $E(a)P$ in $\mathcal{M}%
$, is a subprojection of $P_{ac}^{\infty}(H)$. It follows that $E(a)P =
P_{ac}^{\infty}(H)E(a)P $. This implies that $E(a)P_{ac}^{\infty}(H)=
P_{ac}^{\infty}(H)E(a)P_{ac}^{\infty}(H)$, whence $E(a)P_{ac}^{\infty}(H)=
P_{ac}^{\infty}(H)E(a)$. Therefore $P_{ac}^{\infty}(H)$ is in $\mathcal{A}%
^{\prime}\cap\mathcal{M}$.
%
\end{proof}

The following is an analogue of Kato-Rosenblum Theorem in semifinite von
Neumann algebras.

\begin{theorem}
\label{thm5.2.2} Suppose $H$ and $H_{1}$ are  self-adjoint elements in
$\mathscr A(\mathcal{M})$ such that $H_{1} - H$ is in
$\mathcal{M}\cap L^{1}(\mathcal{M},\tau)$. Then
\[
W_{+}\triangleq sot\text{-}\lim_{t\rightarrow\infty}e^{ itH_{1}}e^{-itH}
P_{ac}^{\infty}(H) \ \ \text{ exists in } \ \mathcal{M}.
\]
Moreover, \begin{enumerate}
\item [(i)]$\displaystyle
W_{+}^{*}W_{+}= P_{ac}^{\infty}(H)\le P_{ac} (H)$ and $ \displaystyle W_{+} W_{+}^{*}= P_{ac}^{\infty}(H_{1}) \le P_{ac} (H_{1});
$
\item [(ii)]
$\displaystyle
W_+   H W_+^*=   H_1 P_{ac}^{\infty}(H_{1})  .
$
\end{enumerate}
\end{theorem}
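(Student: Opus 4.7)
The plan is to assemble the theorem from three main steps.

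First, existence of $W_{+}$ is exactly Proposition~\ref{thm5.0.4} with $J = I$; unitarity of $e^{itH_{1}}e^{-itH}$ then forces $W_{+}$ to be isometric on $P_{ac}^{\infty}(H)\mathcal{H}$ and zero on the orthogonal complement, so $W_{+}^{*}W_{+} = P_{ac}^{\infty}(H)$. The substitution $u = t + s$ in the defining limit (together with Proposition~\ref{prop6.2.3}, which lets one commute $P_{ac}^{\infty}(H)$ past $e^{isH}$) yields the intertwining
\[
e^{isH_{1}} W_{+} = W_{+} e^{isH}, \quad s \in \mathbb{R}.
\]
Stone's theorem extends this to $W_{+}\mathcal{D}(H) \subseteq \mathcal{D}(H_{1})$ with $H_{1}W_{+} = W_{+}H$, so (ii) will follow by right-multiplication by $W_{+}^{*}$ once completeness is in hand.

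Second, to prove $W_{+}W_{+}^{*} \le P_{ac}^{\infty}(H_{1})$, fix $Q \in \mathscr{P}_{ac}^{\infty}(H)$ and set $P := W_{+}QW_{+}^{*}$. The identity $W_{+}^{*}W_{+} = P_{ac}^{\infty}(H) \ge Q$ together with $Q^{2} = Q$ makes $P$ a projection. Taking adjoints in the intertwining gives $W_{+}^{*}F(\mu) = E(\mu)W_{+}^{*}$ for the spectral resolutions $\{E(\mu)\}$, $\{F(\mu)\}$ of $H$, $H_{1}$; combined with $P_{ac}^{\infty}(H) Q = Q$ this yields
\[
P F(\mu) P = W_{+} Q E(\mu) Q W_{+}^{*}.
\]
Since $\mu \mapsto Q E(\mu) Q$ is locally absolutely continuous by the defining property of $\mathscr{P}_{ac}^{\infty}(H)$, and conjugation by $W_{+}$ is a norm contraction, $\mu \mapsto P F(\mu) P$ is locally absolutely continuous, i.e.\ $P \in \mathscr{P}_{ac}^{\infty}(H_{1})$. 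In particular the range of $W_{+} Q$ is contained in $P_{ac}^{\infty}(H_{1})\mathcal{H}$. As $Q$ ranges over $\mathscr{P}_{ac}^{\infty}(H)$, the density $\overline{\bigcup_{Q} Q\mathcal{H}} = P_{ac}^{\infty}(H)\mathcal{H}$ (built into the definition of the supremum of projections) and continuity of $W_{+}$ give $W_{+}(P_{ac}^{\infty}(H)\mathcal{H}) \subseteq P_{ac}^{\infty}(H_{1})\mathcal{H}$, hence $W_{+}W_{+}^{*} \le P_{ac}^{\infty}(H_{1})$.

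Third, for the reverse inclusion $W_{+}W_{+}^{*} \ge P_{ac}^{\infty}(H_{1})$, apply Proposition~\ref{thm5.0.4} with the roles of $H$ and $H_{1}$ swapped (legitimate because $H - H_{1} \in \mathcal{M} \cap L^{1}(\mathcal{M}, \tau)$) to obtain
\[
W_{+}^{\prime} := sot\text{-}\lim_{t \to \infty} e^{itH} e^{-itH_{1}} P_{ac}^{\infty}(H_{1}) \in \mathcal{M}.
\]
The first two steps applied to $W_{+}^{\prime}$ show $(W_{+}^{\prime})^{*} W_{+}^{\prime} = P_{ac}^{\infty}(H_{1})$ and $W_{+}^{\prime}\mathcal{H} \subseteq P_{ac}^{\infty}(H)\mathcal{H}$. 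For $w \in P_{ac}^{\infty}(H_{1})\mathcal{H}$, the vector $W_{+}^{\prime} w$ therefore lies in $P_{ac}^{\infty}(H)\mathcal{H}$, and the telescoping identity
\[
\bigl\| e^{isH_{1}} e^{-isH}(W_{+}^{\prime} w) - w \bigr\| = \bigl\| W_{+}^{\prime} w - e^{isH} e^{-isH_{1}} w \bigr\| \longrightarrow 0 \quad (s \to \infty),
\]
in which the equality uses unitarity of $e^{isH_{1}}$ and $e^{-isH}$ and the convergence uses the definition of $W_{+}^{\prime}$, shows $W_{+}(W_{+}^{\prime} w) = w$. Hence $W_{+} W_{+}^{\prime} = P_{ac}^{\infty}(H_{1})$, so every $w \in P_{ac}^{\infty}(H_{1})\mathcal{H}$ lies in the range of $W_{+}$, giving $W_{+}W_{+}^{*} \ge P_{ac}^{\infty}(H_{1})$. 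Combined with the second step, $W_{+}W_{+}^{*} = P_{ac}^{\infty}(H_{1})$, and (i) and (ii) are established.

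The main obstacle I expect is the completeness argument in the third step: two independent $sot$-limits do not interchange in general, but the telescoping trick reduces matters to a single norm convergence, a reduction made possible only by the unitarity of $e^{isH_{1}}$ and $e^{-isH}$. A secondary technical point, needed in the second step, is the passage from the inclusion $W_{+}(Q\mathcal{H}) \subseteq P_{ac}^{\infty}(H_{1})\mathcal{H}$ for individual $Q \in \mathscr{P}_{ac}^{\infty}(H)$ to the same inclusion with $P_{ac}^{\infty}(H)\mathcal{H}$ in place of $Q\mathcal{H}$; this is handled by continuity of $W_{+}$ together with the density of $\bigcup_{Q} Q\mathcal{H}$ in $P_{ac}^{\infty}(H)\mathcal{H}$, so no additional closure property of $\mathscr{P}_{ac}^{\infty}(H)$ under suprema is needed.
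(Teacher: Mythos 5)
Your proposal is correct and follows essentially the same route as the paper: existence and the partial-isometry identity $W_{+}^{*}W_{+}=P_{ac}^{\infty}(H)$ from the wave-operator proposition, the intertwining relation pushed down to spectral projections to transfer local absolute continuity and get $W_{+}W_{+}^{*}\le P_{ac}^{\infty}(H_{1})$, and completeness via the reversed wave operator $V_{+}=W_{+}^{\prime}$ with $W_{+}V_{+}=P_{ac}^{\infty}(H_{1})$ (your telescoping identity is just a vector-level phrasing of the paper's claim (5.7)). The only cosmetic differences are that you verify directly that $W_{+}QW_{+}^{*}$ lies in $\mathscr P_{ac}^{\infty}(H_{1})$ where the paper invokes its range-projection lemma, and that the closure of $\bigcup_{Q}Q\mathcal{H}$ should be the closed \emph{linear span}, which changes nothing in the argument.
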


\begin{proof}
From Theorem \ref{thm5.0.3} and Definition \ref{def6.2.1}, it follows that
\begin{align}
W_{+}\triangleq sot\text{-}\lim_{t\rightarrow\infty}e^{ itH_{1}}e^{-itH}
P_{ac}^{\infty}(H) \ \ \text{ exists in } \ \mathcal{M}. \label{equ6.1}%
\end{align}
And it is not hard to check that $W_{+}$ is a partial isometry in
$\mathcal{M}$. Therefore
\begin{align}
W_{+}^{*}W_{+}= P_{ac}^{\infty}(H)\le P_{ac} (H). \label{equ6.2}%
\end{align}

Moreover, from Proposition \ref{prop6.2.3}, we have
\begin{align}
W_{+}e^{-isH}  &  =\left(  sot\text{-}\lim_{t\rightarrow\infty}e^{ itH_{1}%
}e^{-itH} P_{ac}^{\infty}(H)\right)  e^{-isH}\tag{by (\ref{equ6.1})}\\
&  = sot\text{-}\lim_{t\rightarrow\infty} \left(  e^{ itH_{1}}e^{-itH} e^{-isH}
P_{ac}^{\infty}(H) \right) \tag{by Proposition \ref{prop6.2.3}}\\
&  = e^{-isH_{1}} \left(  sot\text{-}\lim_{t\rightarrow\infty}e^{ i(t+s)H_{1}%
}e^{-i(t+s)H} P_{ac}^{\infty}(H)\right) \nonumber\\
&  = e^{-isH_{1}}W_{+}, \qquad\forall\ s\in\mathbb{R}. \label{equ6.3}%
\end{align}
Let $\{E(\lambda)\}_{\lambda\in\mathbb{R}}$ and
$\{F(\lambda)\}_{\lambda \in\mathbb{R}}$ be the spectral resolutions
of the identity for $H$  and $H_{1}$ respectively, in $\mathcal{M}$.
Then, for all $x, y\in\mathcal{H}$ and $s\in\mathbb{R}$,
\begin{align}
\langle W_{+}e^{-isH}x, y\rangle &  = \langle e^{-isH}x,W_{+}^{*} y\rangle=
\langle e^{-isH_{1}}W_{+}x, y\rangle\tag{by (\ref{equ6.3})}\\
&  = \int_{\mathbb{R}} e^{-is\lambda} d \langle E(\lambda) x, W_{+}^{*}
y\rangle= \int_{\mathbb{R}} e^{-is\lambda} d \langle F(\lambda)W_{+} x,
y\rangle. \tag{by  (\ref{equ3.1})}%
\end{align}
By the uniqueness of Fourier-Stieltjes transform, we have
\[
\langle E(\lambda) x, W_{+}^{*} y\rangle= \langle F(\lambda)W_{+} x, y\rangle,
\qquad\forall\ x, y\in\mathcal{H}, \ \forall\ \lambda\in\mathbb{R}.
\]
Thus
\begin{align}
W_{+}E(\lambda)=F(\lambda)W_{+}, \qquad\forall\ \lambda\in\mathbb{R}.
\label{equ6.4}%
\end{align}
Let $P\in\mathscr P_{ac}(H)$. Then
\begin{align}
(W_{+}P)^{*}F(\lambda)(W_{+}P)= PW_{+}^{*}F(\lambda)W_{+}P= PW_{+}^{*}%
W_{+}E(\lambda)P = PE(\lambda)P. \tag{by (\ref{equ6.4}) and (\ref{equ6.2})}%
\end{align}
This implies that the mapping $\lambda\mapsto(W_{+}P)^{*}F(\lambda)(W_{+}P)$
from $\mathbb{R}$ into $\mathcal{M}$ is locally absolutely continuous. By
Lemma \ref{lemma6.2.2}, we get that $R(W_{+}P)$, the range projection of
$W_{+}P$ in $\mathcal{M}$, is a subprojection of $P_{ac}^{\infty}(H_{1})$. So,
we obtain that $R(W_{+})\le P_{ac}^{\infty}(H_{1})$. Therefore,
\begin{align}
W_{+}W_{+}^{*}\le P_{ac}^{\infty}(H_{1}). \label{equ6.5}%
\end{align}

Similarly, as $H-H_{1}\in\mathcal{M}\cap L^{1}(\mathcal{M},\tau)$, we let
$V_{+}=sot\text{-}\lim\limits_{t\rightarrow\infty}e^{ itH}e^{-itH_{1}} P_{ac}^{\infty
}(H_{1}).$ Then
\begin{align}
\text{$V_{+}^{*}V_{+}=P_{ac}^{\infty}(H_{1})$ \qquad and \qquad$V_{+}V_{+}%
^{*}\le P_{ac}^{\infty}(H)$.} \label{equ6.6}%
\end{align}
We claim that
\begin{align}
\lim_{t\rightarrow\infty}\|(I-P_{ac}^{\infty}(H))e^{-itH_{1}} P_{ac}^{\infty
}(H_{1}) x\|=0, \qquad\forall\ x\in\mathcal{H}. \label{equ6.7}%
\end{align}
In fact, we have, for all $x\in\mathcal{H}$,
\begin{align}
\lim_{t\rightarrow\infty}\|(I-P_{ac}^{\infty}(H))  &  e^{-itH_{1}}
P_{ac}^{\infty}(H_{1}) x\|\nonumber\\
&  = \lim_{t\rightarrow\infty}\|(I-P_{ac}^{\infty}(H))e^{itH}e^{-itH_{1}}
P_{ac}^{\infty}(H_{1}) x\|\tag{by Proposition  \ref{prop6.2.3}}\\
&  =\|(I-P_{ac}^{\infty}(H))V_{+}x\|=0. \tag{by definition of $V_+$ and
(\ref{equ6.6})}%
\end{align}
Furthermore,
\begin{align}
W_{+} V_{+}  &  = \left(  sot\text{-}\lim_{t\rightarrow\infty}e^{ itH_{1}%
}e^{-itH} P_{ac}^{\infty}(H) \right)  \cdot\left(  sot\text{-}%
\lim_{t\rightarrow\infty}e^{ itH}e^{-itH_{1}} P_{ac}^{\infty}(H_{1})\right)
\nonumber\\
&  = sot\text{-} \lim_{t\rightarrow\infty}\left(  e^{ itH_{1}} P_{ac}^{\infty
}(H)e^{-itH_{1}} P_{ac}^{\infty}(H_{1}) \right) \tag{by Proposition
\ref{prop6.2.3}}\\
&  = P_{ac}^{\infty}(H_{1})+ sot\text{-} \lim_{t\rightarrow\infty}\left(  e^{
itH_{1}} (I- P_{ac}^{\infty}(H))e^{-itH_{1}} P_{ac}^{\infty}(H_{1}) \right)
\nonumber\\
&  = P_{ac}^{\infty}(H_{1}). \tag{by (\ref{equ6.7})}%
\end{align}
Thus $R(W_{+})= P_{ac}^{\infty}(H_{1})$, where $R(W_{+})$ is the range
projection of $W_{+}$ in $\mathcal{M}$. Combining with (\ref{equ6.5}), we
conclude that
\begin{align}
W_{+}W_{+}^{*} = P_{ac}^{\infty}(H_{1})\le P_{ac} (H_{1}). \label{equ6.8}%
\end{align}

Now from (\ref{equ6.2}), (\ref{equ6.3}), and
(\ref{equ6.8}), it follows that
\begin{align}
W_+  e^{itH}  W_+^*=    e^{itH_1} P_{ac}^{\infty}(H_{1}) , \qquad \forall t\in \mathbb R. \label{equ6.9} %
\end{align}
By Stone's Theorem (see Theorem 5.6.36 in \cite{Kadison}), we obtain from (\ref{equ6.9}) that
\begin{align}
W_+   H  W_+^*=    H_1  P_{ac}^{\infty}(H_{1}) .\label{equ6.10}%
\end{align}

The proof is now complete from (\ref{equ6.1}), (\ref{equ6.2}),
(\ref{equ6.8}), and
(\ref{equ6.10}).
\end{proof}

\begin{remark}
\label{rem5.2.5} Similarly it can be shown that, if $H$ and $H_{1}$ are
 self-adjoint elements in $\mathscr A(\mathcal{M})$ such that
$H_{1} - H \in\mathcal{M}\cap L^{1}(\mathcal{M},\tau), $ then
\[
W_{-}\triangleq sot\text{-}\lim_{t\rightarrow-\infty}e^{ itH_{1}}e^{-itH}
P_{ac}^{\infty}(H) \ \ \text{ exists in } \ \mathcal{M},
\]
and
\[
W_{-}^{*}W_{-}= P_{ac}^{\infty}(H)\le P_{ac} (H) \qquad\text{ and } \qquad
W_{-} W_{-}^{*}= P_{ac}^{\infty}(H_{1}) \le P_{ac} (H_{1}).
\]

\end{remark}

\begin{remark}
By Lemma  \ref{lemma5.2.2.1-28}, Theorem \ref{thm5.2.2} and Remark
\ref{rem5.2.5} imply the classical Kato-Rosenblum Theorem for self-adjoint
operators in $\mathcal{B}(\mathcal{H})$ when $\mathcal H$ is separable.
\end{remark}

\begin{example}
Let $\mathcal{N}$ be a finite von Neumann algebra with a faithful normal
tracial state. Let $\mathcal{H}_{0}=L^{2}(\mathbb{R}^{3}, \mu)$, where $\mu$
is the Lebesgue measure on $\mathbb{R}^{3}$. Let $\mathcal{M}=\mathcal{N}%
\otimes\mathcal{B}(\mathcal{H}_{0})$, acting naturally on the Hilbert space
$\mathcal{H}=L^{2}(\mathcal{N},\tau)\otimes\mathcal{H}_{0}$, be a semifinite
von Neumann algebra. Let $\Delta$ be the Laplacian operator on $L^{2}%
(\mathbb{R}^{3}, \mu)$. Then $-(I_{\mathcal{N}}\otimes\Delta)$ is a densely
defined, self-adjoint operator in $\mathscr A(\mathcal{M})$. As $-\Delta$ is
spectrally absolutely continuous on $L^{2}(\mathbb{R}^{3}, \mu)$ (see Section X.3.4 in
\cite{Kato4}), Proposition \ref{example5.1.3} shows that $P_{ac}^{\infty
}(-(I_{\mathcal{N}}\otimes\Delta))=I_{\mathcal{M}}$.
\end{example}

\subsection{Real part of hyponormal operators}

When $H$ is bounded,   the following analogue of Kato-Putnam criterion in
\cite{Kato3} and \cite{Putnam} is sometimes useful  to determine whether
$P_{ac}^\infty(H)$ is zero or not.

\begin{proposition}\label{prop5.2.9}
Assume that $H$ is a self-adjoint element in $\mathcal M$. Then the following statements are equivalent.
\begin{enumerate}
 \item [(i)]
  $P_{ac}^\infty(H)\ne 0$.
 \item [(ii)] There exist a self-adjoint element $K\in\mathcal M$ and a
 nonzero positive element $L\in\mathcal M$ such that $$i(HK-KH)=L.$$
\end{enumerate}
\end{proposition}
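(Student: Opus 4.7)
My plan is to treat the two directions separately. Direction $(ii)\Rightarrow(i)$ is a Kato--Putnam smoothness calculation adapted to the semifinite setting, while $(i)\Rightarrow(ii)$ requires an explicit construction of a hyponormal operator with $H$ as its real part, and is the harder direction.

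For $(ii)\Rightarrow(i)$, I would use the identity
\[
\frac{d}{dt}\bigl(e^{itH}Ke^{-itH}\bigr) \;=\; ie^{itH}[H,K]e^{-itH} \;=\; e^{itH}Le^{-itH}.
\]
Integrating over $[0,T]$ and pairing with $x\in\mathcal H$ yields
\[
\int_0^T \|L^{1/2}e^{-itH}x\|^2\,dt \;=\; \bigl\langle (e^{iTH}Ke^{-iTH}-K)x,x\bigr\rangle \;\le\; 2\|K\|\|x\|^2,
\]
and symmetrically for $[-T,0]$, so $L^{1/2}$ is $H$-smooth in Kato's sense with constant controlled by $\|K\|$. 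Standard smoothness theory (Plancherel/Fourier inversion) then furnishes a constant $C=C(\|K\|)$ with $\|L^{1/2}E(\Delta)L^{1/2}\|\le C\cdot m(\Delta)$ for every bounded Borel set $\Delta$. For the spectral cut-offs $P_n:=\chi_{[1/n,\|L\|]}(L)$ the inequality $P_n\le n\,L^{1/2}P_nL^{1/2}$ (which uses $LP_n\ge (1/n)P_n$ and $[L,P_n]=0$) then gives $\|P_nE(\Delta)P_n\|\le n\|L\|C\cdot m(\Delta)$, so $\lambda\mapsto P_nE(\lambda)P_n$ is norm-Lipschitz and hence $P_n\in\mathscr P_{ac}^\infty(H)$. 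Letting $n\to\infty$, the range projection $R(L)\le P_{ac}^\infty(H)$, and $R(L)\ne 0$ because $L\ne 0$.

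For $(i)\Rightarrow(ii)$, I would first invoke Proposition \ref{prop6.2.3} to see that $P:=P_{ac}^\infty(H)$ commutes with every spectral projection of $H$, and reduce the problem to producing a self-adjoint $K_1\in P\mathcal M P$ with $i[HP,K_1]\ge 0$ and $\ne 0$ on $P\mathcal H$; then $K:=PK_1P\in\mathcal M$ and $L:=i[H,K]=P\,i[H,K_1]\,P\ge 0$ provide the pair required in $(ii)$. To construct $K_1$, I would diagonalize $HP$ by the spectral theorem as multiplication by $\lambda$ on a direct integral $\int^\oplus\mathcal K_\lambda\,d\lambda$ over the spectrum, and mimic the scalar prototype in which, on $L^2([a,b])$ with $A=M_\lambda$, setting $K_1=-iH_{\text{HT}}$ (the skew Hilbert transform made self-adjoint) gives $i[A,K_1]=(1/\pi)|1\rangle\langle 1|\ge 0$; tensoring with the identity on the multiplicity bundle produces a candidate $K_1$ whose commutator $i[HP,K_1]$ is a positive rank-one kernel along each fiber, manifestly nonzero. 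The main obstacle is verifying that the $K_1$ so constructed actually sits in $P\mathcal M P$ rather than merely in $\mathcal B(P\mathcal H)$; I would handle this by approximating the Hilbert-transform operator in strong operator topology by finite sums of operators of the form $F(H)VG(H)$ with $F,G$ bounded Borel functions and $V$ a partial isometry in $\mathcal M$---of which there are many because $\mathcal M$ is properly infinite and countably decomposable---and by using the norm absolute continuity of $P$ to justify passage to the limit. Once $K_1\in P\mathcal M P$ is secured, the positivity and non-vanishing of $L=i[H,K]$ descend from the scalar computation, completing the proof.
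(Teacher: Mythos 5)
Your direction $(ii)\Rightarrow(i)$ is correct and is essentially the paper's own argument: integrating $\tfrac{d}{dt}\,e^{itH}Ke^{-itH}=e^{itH}Le^{-itH}$ shows that $L^{1/2}$ is $H$-smooth, Kato's equivalence (Theorem 2.1 of \cite{Kato3}) converts the smoothness bound into the Lipschitz estimate for $\lambda\mapsto L^{1/2}E(\lambda)L^{1/2}$, and your spectral cut-offs of $L$ reproduce the argument of Lemma \ref{lemma6.2.2} applied to $S=L^{1/2}$, giving $0\ne R(L)\le P_{ac}^{\infty}(H)$.

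The genuine gap is in $(i)\Rightarrow(ii)$. Your plan is to diagonalize $HP_{ac}^{\infty}(H)$ on a direct integral and take $K_{1}$ to be a fiberwise (skew) Hilbert transform. First, direct-integral diagonalization requires separability of the underlying Hilbert space, which is not assumed in this setting. Second, and decisively, the whole content of statement $(ii)$ is that $K$ must be an element of $\mathcal M$, and you leave exactly this point unresolved: you acknowledge that the fiberwise Hilbert transform a priori lives only in $\mathcal B(P\mathcal H)$, and the proposed remedy --- approximating it in the strong operator topology by sums of operators $F(H)VG(H)$ with $V$ a partial isometry in $\mathcal M$ --- is not justified and has no evident reason to converge to the intended operator. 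Without membership in $\mathcal M$, the construction does not prove $(ii)$. The paper sidesteps the geometric construction entirely: given a nonzero $P\in\mathscr P_{ac}^{\infty}(H)$, the cut-offs $\omega_{n}$ of Definition \ref{def4.1.5} together with Lemma \ref{lemma4.2.3}(vi) show that $\omega_{n}(H)P\omega_{n}(H)$ is, for $n$ large, a nonzero positive $H$-smooth element of $\mathcal M$ (the truncation is essential here, since a general $P\in\mathscr P_{ac}^{\infty}(H)$ only yields \emph{local} absolute continuity, not a global smoothness bound); one then sets $L=(\omega_{n}(H)P\omega_{n}(H))^{2}$ and invokes Kato's converse (Theorem 3.2 of \cite{Kato3}) to see that
\[
K=-\int_{0}^{\infty}e^{i\lambda H}Le^{-i\lambda H}\,d\lambda
\]
converges in the strong operator topology. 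Since each integrand lies in $\mathcal M$ and is self-adjoint, $K$ is automatically a self-adjoint element of $\mathcal M$, and $i(HK-KH)=L$ by Lemma X.5.2 of \cite{Kato4}. If you wish to salvage your approach, you would need to supply a genuine proof that your $K_{1}$ lies in $\mathcal M$; replacing the Hilbert-transform construction by the convergent integral above is the standard way to get this for free.
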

\begin{proof}
(i)$\Rightarrow$(ii) As $P_{ac}^\infty(H)\ne 0$, there exists a nonzero projection $P\in \mathscr P_{ac}^\infty(H)$. Let $\omega_n(\cdot)$ be as in Definition \ref{def4.1.5}. By Lemma \ref{lemma4.2.3} (vi),
\[
\int_{\mathbb{R}} \|Pe^{-i\lambda H}\omega_{{n}}(H) x\|^{2} d\lambda \le\frac{n } {2\pi} \|x\|^{2}, \qquad \text{ for all } x\in\mathcal H.
\]Since $e^{-i\lambda H}$ and $\omega_{{n}}(H)$ commute, by Theorem 2.1 in \cite{Kato3},
$$
\sup_{\lambda>\mu}\frac { \|P\omega_{{n}}(H)
(E(\lambda)-E(\mu))\|^2}{\lambda-\mu}<\infty,
$$ where $\{E(\lambda)\}_{\lambda\in\mathbb R}$ is the spectral resolution of the identity for $H$ in $\mathcal M$.
Hence $$ \sup_{\lambda>\mu}\frac { \|\omega_{{n}}(H)P\omega_{{n}}(H)
(E(\lambda)-E(\mu))\|^2}{\lambda-\mu}\le \sup_{\lambda>\mu}\frac {
\|P\omega_{{n}}(H) (E(\lambda)-E(\mu))\|^2}{\lambda-\mu}<\infty.
$$ By Lemma \ref{lemma5.2.2.1}, we might assume that $\omega_{{n}}(H)P\omega_{{n}}(H)\ne 0$ for a large $n\in \mathbb N$. Again from  Theorem 2.1 in \cite{Kato3}, $\omega_{{n}}(H)P\omega_{{n}}(H)$ is a nonzero positive $H$-smooth operator in $\mathcal M$. Let
$$
L= (\omega_{{n}}(H)P\omega_{{n}}(H))^2.
$$
Theorem 3.2 of \cite{Kato3} shows that
$$
K=   - \int^{\infty}_0 e^{i\lambda H}Le^{-i\lambda H} d\lambda \qquad \text{(convergence is in strong operator topology)}.
$$
  exists in $\mathcal M$  and  Lemma X.5.2 in \cite {Kato4} implies that $i(HK-KH)=L.$

(ii)$\Rightarrow$ (i) Assume that $K, L$ are elements  in $\mathcal
M$ with desired properties. From Theorem 6.2 in \cite{Kato3}, it
follows that $L^{1/2}$ is $H$-smooth. Now Theorem 2.1 in
\cite{Kato3} implies that the mapping $\lambda\rightarrow
L^{1/2}E(\lambda)L^{1/2}$ from $\mathbb R$ into $\mathcal M$ is
Lipschitz continuous (so locally absolutely continuous). By Lemma
\ref{lemma6.2.2}, the range projection of $L^{1/2}$ is a nonzero
subprojection of $P_{ac}^\infty(H)$. Therefore $P_{ac}^\infty(H)\ne
0$, which ends the proof of the proposition.
\end{proof}

Recall that an operator $T$ in $\mathcal B(\mathcal H)$ is hyponormal if
$T^*T-TT^*$ is positive. The following result could be compared to Corollary VI.3.3 in \cite{Martin}.
\begin{corollary}  \label{prop5.2.10}
 $H$ is a self-adjoint element in $\mathcal
M$ with  $P_{ac}^\infty(H)\ne 0$ if and only if $H$ is the real part
of a non-normal hyponormal operator $T$ in $\mathcal M$.
\end{corollary}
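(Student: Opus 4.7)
The corollary should follow almost immediately from Proposition \ref{prop5.2.9} together with the standard algebraic identity relating the self-commutator $T^*T-TT^*$ of an operator $T=H+iK$ to the commutator $i(HK-KH)$ of its real and imaginary parts. My plan is therefore to set up this dictionary and then apply Proposition \ref{prop5.2.9} in both directions.

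First I would record the computation: if $T=H+iK$ with $H,K$ self-adjoint elements of $\mathcal{M}$, then
\[
T^*T-TT^*=(H-iK)(H+iK)-(H+iK)(H-iK)=2i(HK-KH).
\]
Consequently, $T$ is hyponormal precisely when $i(HK-KH)\ge 0$, and $T$ is normal precisely when $HK=KH$. Thus $T$ is hyponormal and non-normal if and only if there exists a nonzero positive $L\in\mathcal{M}$ with $i(HK-KH)=L/2$ (or equivalently $i(HK-KH)=L$ after rescaling $K$).

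For the forward direction, assume $P_{ac}^\infty(H)\ne 0$. By Proposition \ref{prop5.2.9}, there exist a self-adjoint $K\in\mathcal{M}$ and a nonzero positive $L\in\mathcal{M}$ with $i(HK-KH)=L$. Setting $T=H+iK$, the identity above gives $T^*T-TT^*=2L$, which is positive and nonzero; hence $T$ is hyponormal but not normal, and clearly $\operatorname{Re}(T)=H$. For the reverse direction, suppose $H=\operatorname{Re}(T)$ for some non-normal hyponormal $T\in\mathcal{M}$; write $T=H+iK$ with $K=\operatorname{Im}(T)$ self-adjoint. Then $L:=2i(HK-KH)=T^*T-TT^*$ is positive (by hyponormality) and nonzero (since $T$ is not normal), so $i(HK-KH)=L/2$ satisfies the hypothesis of Proposition \ref{prop5.2.9}(ii), giving $P_{ac}^\infty(H)\ne 0$.

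There is no real obstacle here; the content of the corollary is entirely in Proposition \ref{prop5.2.9}, and what remains is the translation between the commutator condition $i(HK-KH)=L\ge 0$, $L\ne 0$, and the hyponormality/non-normality of $T=H+iK$. The only thing to be slightly careful about is making sure $K$ is bounded so that $T\in\mathcal{M}$ (which is guaranteed by the hypothesis of Proposition \ref{prop5.2.9}, where $K\in\mathcal{M}$) and that $H$ is indeed bounded in this statement, which is assumed in the corollary.
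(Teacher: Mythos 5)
Your proposal is correct and is exactly the argument the paper intends: the paper's own proof of Corollary \ref{prop5.2.10} consists of the single line ``It is a direct consequence of Proposition \ref{prop5.2.9},'' and your computation $T^*T-TT^*=2i(HK-KH)$ for $T=H+iK$ is precisely the routine translation being left to the reader. Both directions are handled correctly, so nothing further is needed.
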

\begin{proof}
It is a direct consequence of Proposition \ref{prop5.2.9}.
\end{proof}

\begin{remark}
Proposition \ref{prop5.2.9} may be used to construct  new examples
of self-adjoint operators with nonzero norm absolutely continuous
projections in $\mathcal M$. For example, let $H, K$ and $L$ be as
above. Assume $K_1$ is a self-adjoint element in $\mathcal M$ such
that $K_1K=KK_1$. Then $i((H+K_1)K-K(H+K_1))=L$. Hence, $H+K_1$ is a
self-adjoint element in $\mathcal M$ with nonzero norm absolutely
continuous projections.
\end{remark}

\begin{proposition}\label{prop5.3.4}
\label{cor5.2.5}  If $H$ is a self-adjoint element in $\mathcal{M}$ such that $P_{ac}%
^{\infty}(H)\ne0$, then there exists no self-adjoint diagonal
operator $K$ in $\mathcal{M}$ satisfying $H-K\in
L^{1}(\mathcal{M},\tau)$. In particular, if $H$ is  the real part of a
non-normal hyponormal operator   in $\mathcal M$, then there exists no self-adjoint diagonal
operator $K$ in $\mathcal{M}$ satisfying $H-K\in
L^{1}(\mathcal{M},\tau)$.
\end{proposition}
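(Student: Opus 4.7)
My plan is to argue by contradiction and reduce immediately to the Kato--Rosenblum statement of Theorem \ref{thm5.2.2}. Suppose, toward contradiction, that a self-adjoint diagonal $K \in \mathcal{M}$ satisfies $H - K \in L^1(\mathcal{M}, \tau)$. Since both $H$ and $K$ are bounded, $H - K$ lies in $\mathcal{M} \cap L^1(\mathcal{M}, \tau)$, so Theorem \ref{thm5.2.2} furnishes a partial isometry $W_+ \in \mathcal{M}$ with $W_+^* W_+ = P_{ac}^\infty(H)$ and $W_+ W_+^* = P_{ac}^\infty(K)$. The hypothesis $P_{ac}^\infty(H) \ne 0$ forces $W_+ \ne 0$, and hence $P_{ac}^\infty(K) \ne 0$. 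The whole proof therefore reduces to the assertion that a self-adjoint diagonal operator in $\mathcal{M}$ has trivial norm absolutely continuous support. I expect this last step to be the main technical content, though it is essentially a direct computation at the jumps of the spectral family.

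To verify $P_{ac}^\infty(K) = 0$, I will write $K = \sum_n \lambda_n P_n$ with distinct reals $\lambda_n$ and mutually orthogonal projections $P_n \in \mathcal{M}$ satisfying $\sum_n P_n = I$ in strong operator topology. The spectral resolution of the identity for $K$ is then the strongly right-continuous step function $F(\lambda) = \sum_{\lambda_n \le \lambda} P_n$, whose sot-jump at each eigenvalue $\lambda_n$ equals $P_n$. Suppose $P \in \mathscr P_{ac}^\infty(K)$; by Definition \ref{def5.1.1}, the map $\lambda \mapsto PF(\lambda)P$ is locally absolutely continuous in the operator norm and therefore norm-continuous. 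Since $PF(\mu)P \to PF(\lambda_n^-)P$ both in sot and, by norm-continuity, in norm as $\mu \to \lambda_n^-$, the jump must vanish:
\[
PP_n P = P\bigl(F(\lambda_n) - F(\lambda_n^-)\bigr)P = 0
\]
for every $n$. Because $P_n$ is a projection, $\|P_n P\|^2 = \|PP_n P\| = 0$, so $P_n P = 0$ for all $n$. Summing in strong operator topology yields $P = IP = \sum_n P_n P = 0$. Hence $\mathscr P_{ac}^\infty(K) = \{0\}$, giving $P_{ac}^\infty(K) = 0$, which is the desired contradiction.

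The second assertion follows immediately: if $H$ is the real part of a non-normal hyponormal operator in $\mathcal{M}$, then Corollary \ref{prop5.2.10} guarantees $P_{ac}^\infty(H) \ne 0$, so the first part of the proposition applies.
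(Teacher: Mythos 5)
Your proof is correct and follows essentially the same route as the paper: reduce to Theorem \ref{thm5.2.2} (noting $H-K\in\mathcal M\cap L^{1}(\mathcal M,\tau)$ since both operators are bounded), conclude that $P_{ac}^{\infty}(H)$ and $P_{ac}^{\infty}(K)$ are Murray--von Neumann equivalent via $W_{+}$, and derive the contradiction from $P_{ac}^{\infty}(K)=0$. The only difference is in how that last fact is verified: the paper simply invokes Lemma \ref{lemma4.1.2} to get $P_{ac}^{\infty}(K)\le P_{ac}(K)=0$, since a diagonal operator has pure point spectrum and hence trivial absolutely continuous subspace, whereas you compute directly that any $P\in\mathscr P_{ac}^{\infty}(K)$ kills every jump $P_n$ of the step spectral family and is therefore zero. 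Your version is a touch more self-contained (it does not pass through $\mathcal H_{ac}$ at all), and the jump argument is sound --- local absolute continuity of $\lambda\mapsto PF(\lambda)P$ gives norm continuity, which forces $PP_nP=0$ and hence $P_nP=0$ --- but it buys nothing the paper's one-line inequality does not already give.
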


\begin{proof}
Note that, if $K$ is a self-adjoint diagonal operator in
$\mathcal{M}$, then $P^{\infty}_{ac}(K)\le P_{ac}(K)=0$. Now that
result is a direct consequence of Theorem \ref{thm5.2.2} and
Corollary \ref{prop5.2.10}.
\end{proof}

\subsection{Kuroda-Birman Theorem in semifinite von Neumann algebras}

\begin{lemma}
\label{lemma5.3.1} Suppose $H$ is a  self-adjoint operator in $\mathscr
A(\mathcal{M})$. 

\begin{enumerate}
\item[(i)] If $A\in\mathcal{M}\cap L^{2}(\mathcal{M},\tau)$, then
$sot\text{-}\lim\limits_{t\rightarrow\infty} \left(  Ae^{-itH}P_{ac}^{\infty}(H)
\right)  =0.$

\item[(ii)] If $B\in\mathcal{M}\cap L^{1}(\mathcal{M},\tau)$, then
$sot\text{-}\lim\limits_{t\rightarrow\infty}\left(  Be^{-itH}P_{ac}^{\infty}(H)
\right)  =0.$
\end{enumerate}
\end{lemma}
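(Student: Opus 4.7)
The plan is to reduce (ii) to (i) via polar decomposition, and to prove (i) by using the cut-off $\omega_{n}(H)$ to upgrade the $L^{2}$-time-integrability supplied by $\mathcal{M}$-$H$-smoothness to pointwise decay as $t\to\infty$.

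For (ii), let $B=V|B|$ be the polar decomposition of $B$ in $\mathcal{M}$. Then $|B|^{1/2}\in\mathcal{M}\cap L^{2}(\mathcal{M},\tau)$ while $V|B|^{1/2}\in\mathcal{M}$, and factoring $B=(V|B|^{1/2})\cdot|B|^{1/2}$ gives
\[
\|Be^{-itH}P_{ac}^{\infty}(H)x\|\ \le\ \|V|B|^{1/2}\|\cdot\big\||B|^{1/2}e^{-itH}P_{ac}^{\infty}(H)x\big\|.
\]
The second factor tends to $0$ by (i) applied with $A=|B|^{1/2}$, so (ii) follows once (i) is established.

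For (i), since $\|Ae^{-itH}\|\le\|A\|$ uniformly in $t$, it suffices to prove $\|Ae^{-itH}y\|\to 0$ for $y$ ranging over a set whose linear span is dense in $P_{ac}^{\infty}(H)\mathcal{H}$. Fix the vectors $\{x_{m}\}$ from Lemma \ref{lemma3.1.1}(ii) and consider $y=\omega_{n}(H)PA'x_{m}$ for $P\in\mathscr{P}_{ac}^{\infty}(H)$, $A'\in\mathcal{M}'$, and $m,n\in\mathbb{N}$, where $\omega_{n}$ is the cut-off attached to $P$ as in Definition \ref{def4.1.5}. Using Lemma \ref{lemma5.2.2.1} (so that $\omega_{n}(H)\to I$ in strong operator topology) together with Definition \ref{def5.2.1}, such $y$'s have dense linear span in $P_{ac}^{\infty}(H)\mathcal{H}$. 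Pulling $A'$ through $e^{-i\lambda H}\omega_{n}(H)P\in\mathcal{M}$ and invoking Corollary \ref{cor4.2.7} then gives
\[
\int_{\mathbb{R}}\|Ae^{-i\lambda H}y\|^{2}\,d\lambda\ \le\ \|A'\|^{2}\int_{\mathbb{R}}\|Ae^{-i\lambda H}\omega_{n}(H)Px_{m}\|^{2}\,d\lambda\ \le\ \tfrac{n}{2\pi}\|A'\|^{2}\|A\|_{2}^{2}<\infty.
\]

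The decisive observation is that $\omega_{n}$ is supported in $[-n,n]$, so $y$ lies in the spectral subspace of $H$ for $[-n,n]$; hence $\|Hy\|\le n\|y\|$, and $t\mapsto e^{-itH}y$ is norm-Lipschitz with constant $n\|y\|$. Consequently $t\mapsto\|Ae^{-itH}y\|^{2}$ is Lipschitz on $\mathbb{R}$, and the standard elementary fact that a nonnegative, uniformly continuous, integrable function on $\mathbb{R}$ must tend to $0$ at infinity yields $\|Ae^{-itH}y\|\to 0$ as $t\to\infty$. Linearity in $y$ and the uniform bound $\|Ae^{-itH}\|\le\|A\|$ then extend the convergence to every $z\in P_{ac}^{\infty}(H)\mathcal{H}$. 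The principal obstacle is precisely this last passage—turning $L^{2}$-time-integrability into pointwise decay—and inserting the cut-off $\omega_{n}(H)$ is the device that supplies the Lipschitz regularity in $t$ needed to bridge the gap.
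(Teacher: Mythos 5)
Your proof is correct, but the decisive step is handled by a genuinely different mechanism than the paper's. The paper proves (i) by first showing $\|Pe^{-itH}\omega_n(H)x\|\to 0$ via Lemma \ref{lemma4.2.3}(v) — the representation of $Pe^{-itH}\omega_n(H)x$ as the Fourier transform of an $L^1(\mathbb R,\mathcal H)$-valued derivative — together with the Riemann--Lebesgue lemma; it then removes $\omega_n(H)$, upgrades to $\|Ae^{-itH}P\|_2\to 0$ by dominated convergence over the frame $\{x_m\}$, and finishes by the same commutant/density argument you use. You instead bypass Riemann--Lebesgue entirely: you keep only the $L^2$-in-$t$ integrability from Corollary \ref{cor4.2.7} (which rests on the Plancherel part, Lemma \ref{lemma4.2.3}(vi)) and exploit the fact that $\omega_n$ is supported in $[-n,n]$, so that $y=\omega_n(H)PA'x_m$ lies in the spectral subspace for $[-n,n]$, making $t\mapsto Ae^{-itH}y$ Lipschitz and allowing Barbalat's lemma (integrable $+$ uniformly continuous $\Rightarrow$ decay) to produce the pointwise limit. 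Both routes are sound; the paper's is the more standard scattering-theory argument and yields along the way the slightly stronger statement $\|Ae^{-itH}P\|_2\to 0$, while yours is more elementary in that it never invokes the vector-valued Riemann--Lebesgue lemma. Your reduction of (ii) via the factorization $B=(V|B|^{1/2})\,|B|^{1/2}$ also works, though the paper's one-line observation $\mathcal M\cap L^1(\mathcal M,\tau)\subseteq\mathcal M\cap L^2(\mathcal M,\tau)$ is more direct. One phrasing to tighten: the vectors $\omega_n(H)PA'x_m$ need not lie in $P_{ac}^{\infty}(H)\mathcal H$; what your argument actually uses (and what is true) is that their closed linear span \emph{contains} $P_{ac}^{\infty}(H)\mathcal H$, which together with the uniform bound $\|Ae^{-itH}\|\le\|A\|$ suffices.
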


\begin{proof}
(i) Let $A\in\mathcal{M}\cap L^{2}(\mathcal{M},\tau)$. If suffices to show
that, if $P\in\mathscr P_{ac}^{\infty}(H)$, then
\[
sot\text{-}\lim_{t\rightarrow\infty} \left(  Ae^{-itH}P \right)  =0.
\]
By Lemma \ref{lemma3.1.1}, there exists a family $\{x_{m}\}_{m\in\mathbb{N}}$
of vectors in $\mathcal{H}$ such that
\begin{equation}
\|X\|_{2}^{2}=\tau(X^{*}X)= \sum_{m} \langle X^{*}X x_{m}, x_{m}\rangle=
\sum_{m} \|Xx_{m}\|^{2},\qquad\forall\ X\in\mathcal{M}. \label{equFeb4_5.8}
\end{equation}
 Assume $P\in\mathscr P_{ac}^{\infty}(H)$. Let $\omega_n(\cdot)$ be as in Definition \ref{def4.1.5}. By Lemma \ref{lemma4.2.3} (v) and the Riemann-Lebesgus Lemma (also see Theorem 1.8.1 of \cite {Arendt}),
\begin{equation*}
\lim_{t\rightarrow \pm\infty} \|Pe^{-itH}\omega_n(H)x\|=0, \qquad \text{ for all } x\in\mathcal H.
\end{equation*}
From Lemma \ref{lemma5.2.2.1}, it follows that
\begin{equation}
\lim_{t\rightarrow \pm\infty} \|Pe^{-itH} x\|=0, \qquad \text{ for all } x\in\mathcal H.\label{equFeb4_5.9}
\end{equation}

We claim that \begin{equation}
\lim_{t\rightarrow \pm\infty} \|Ae^{-itH}P\|_2= \lim_{t\rightarrow \mp\infty} \|Pe^{-itH}A^*\|_2=0. \label{equFeb4_5.10}
\end{equation}
In fact, by  (\ref{equFeb4_5.8}), we have \begin{equation}
\|Pe^{-itH}A^*\|_2^2= \sum_m \|Pe^{-itH}A^*x_m\|^2. \label{equFeb4_5.11}
\end{equation} It induces from (\ref{equFeb4_5.9}) that \begin{equation}
\lim_{t\rightarrow \pm\infty} \|Pe^{-itH} A^*x_m\|=0, \qquad \text { for all } m\in\mathbb N.  \label{equFeb4_5.12}
\end{equation}
Note that  \begin{equation}
\|Pe^{-itH} A^*x_m\|\le \|A^*x_m\| \quad \text { and } \quad \sum_m \|Pe^{-itH}A^*x_m\|^2 \le \sum_m \|A^*x_m\|^2 =\|A^*\|_2^2<\infty.\label{equFeb4_5.13}
\end{equation}
By Lebesgue Dominating Convergence Theorem, from (\ref{equFeb4_5.11}), (\ref{equFeb4_5.12}) and (\ref{equFeb4_5.13}), we conclude that
$$
\lim_{t\rightarrow \pm\infty} \|Ae^{-itH}P\|_2= \lim_{t\rightarrow \mp\infty} \|Pe^{-itH}A^*\|_2=0.
$$
I.e. (\ref{equFeb4_5.10}) holds.

From  (\ref{equFeb4_5.8}) and  (\ref{equFeb4_5.10}), it follows that
\[
\lim_{t\rightarrow\infty} \|Ae^{-itH}Px_{m}\| \le \lim_{t\rightarrow\infty} \|Ae^{-itH}P\|_2=0, \qquad\forall\ m\in
\mathbb{N}.
\]
Furthermore,
\[
\lim_{t\rightarrow\infty} \|Ae^{-itH}PA^{\prime}x_{m}\|=\lim_{t\rightarrow\infty} \|A^{\prime} (Ae^{-itH}Px_{m})\|=0, \qquad
\forall\ A^{\prime}\in\mathcal{M}^{\prime}, \ \text{ and } \ m\in\mathbb{N}.
\]
By Lemma \ref{lemma3.1.1},
\[
\lim_{t\rightarrow\infty} \|Ae^{-itH}Px\|=0, \qquad\forall\ x\in\mathcal{H},
\]
i.e.
\[
sot\text{-}\lim_{t\rightarrow\infty} \left(  Ae^{-itH}P \right)  =0.
\]
This ends the proof of (i).

(ii) follows from the fact that $M\cap L^{1}(\mathcal{M},\tau)\subseteq M\cap
L^{2}(\mathcal{M},\tau).$
\end{proof}

The following is an analogue of Kuroda-Birman Theorem in a semifinite von
Neumann algebra.

\begin{theorem}
\label{thm5.3.2} Suppose $H$ and $H_{1}$ are  self-adjoint elements in
$\mathscr A(\mathcal{M})$ such that
\[
(H_{1}+i)^{-1} - (H+i)^{-1} \in\mathcal{M}\cap L^{1}(\mathcal{M},\tau).
\]
Then
\[
W_{+}\triangleq sot\text{-}\lim_{t\rightarrow\infty}e^{ itH_{1}}e^{-itH}
P_{ac}^{\infty}(H) \ \ \text{ exists in } \ \mathcal{M}.
\]
Moreover, \begin{enumerate}
\item [(i)]$\displaystyle
W_{+}^{*}W_{+}= P_{ac}^{\infty}(H)\le P_{ac} (H)$ and $ \displaystyle W_{+} W_{+}^{*}= P_{ac}^{\infty}(H_{1}) \le P_{ac} (H_{1}).
$
\item [(ii)]
$\displaystyle
W_+   HP_{ac}^{\infty}(H) W_+^*=  H_1 P_{ac}^{\infty}(H_{1})  .
$
\end{enumerate}

\end{theorem}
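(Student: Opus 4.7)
The plan is to invoke the identification-operator result Theorem~\ref{thm5.0.3} for a carefully chosen $J$, and then algebraically recover sot-convergence of $Y(t) := e^{itH_1} e^{-itH} P_{ac}^\infty(H)$ itself. I set $R = (H+i)^{-1}$ and $R_1 = (H_1+i)^{-1}$, so $V := R_1 - R$ lies in $\mathcal{M} \cap L^1(\mathcal{M},\tau)$ by hypothesis, and take $J = R_1 R \in \mathcal{M}$. Since $J\mathcal{H} \subseteq R_1 \mathcal{H} = \mathcal{D}(H_1)$ the domain hypothesis of Theorem~\ref{thm5.0.3} holds, and for $x \in \mathcal{D}(H)$ the identities $H_1 R_1 = I - iR_1$ and $RH = I - iR$ give
\[
(H_1 J - JH)x = (I - iR_1)Rx - R_1(I - iR)x = (R - R_1)x = -Vx,
\]
so the closure of $H_1 J - JH$ equals $-V \in \mathcal{M} \cap L^1$. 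Thus Theorem~\ref{thm5.0.3} together with Definition~\ref{def6.2.1} yield sot-convergence of $e^{itH_1} R_1 R e^{-itH} P_{ac}^\infty(H)$ to some $Z_+ \in \mathcal{M}$.

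Proposition~\ref{prop6.2.3} gives $[R, P_{ac}^\infty(H)] = 0$, while $R_1$ commutes with $e^{itH_1}$ and $R$ with $e^{-itH}$ by functional calculus, so the above rewrites as $R_1 Y(t) R \to Z_+$ in sot. Substituting $R = R_1 - V$ into $Y(t) R = e^{itH_1} R e^{-itH} P_{ac}^\infty(H)$ produces the key identity
\[
Y(t) R = R_1 Y(t) - e^{itH_1} V e^{-itH} P_{ac}^\infty(H).
\]
By Lemma~\ref{lemma5.3.1}(ii) and the fact that $e^{itH_1}$ is unitary, the remainder tends to zero in sot as $t \to \infty$; right-multiplying by $R$ therefore gives
\[
Y(t) R^2 = R_1 Y(t) R - [e^{itH_1} V e^{-itH} P_{ac}^\infty(H)] R \longrightarrow Z_+ \quad \text{in sot.}
\]
Since $R^2\mathcal{H} = \mathcal{D}(H^2)$ is a dense subspace of $\mathcal{H}$ and $\|Y(t)\| \le 1$ uniformly in $t$, a standard $\epsilon/3$-argument upgrades this convergence on $\mathcal{D}(H^2)$ to sot-convergence of $Y(t)$ itself to an operator $W_+ \in \mathcal{M}$, which is the main existence assertion.

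The remaining properties follow by copying the argument in the proof of Theorem~\ref{thm5.2.2}. The constancy of $Y(t)^* Y(t) = P_{ac}^\infty(H)$ together with strong convergence yields $W_+^* W_+ = P_{ac}^\infty(H)$. The intertwining $Y(t) e^{-isH} = e^{-isH_1} Y(t+s)$ passes to the limit as $W_+ e^{-isH} = e^{-isH_1} W_+$, and uniqueness of the Fourier--Stieltjes transform gives $W_+ E(\lambda) = F(\lambda) W_+$ for all $\lambda$; applying Lemma~\ref{lemma6.2.2} to $(W_+ P)^* F(\lambda)(W_+ P) = PE(\lambda)P$ for each $P \in \mathscr{P}_{ac}^\infty(H)$ then shows $W_+ W_+^* \leq P_{ac}^\infty(H_1)$. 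Rerunning the existence argument with $(H, H_1)$ exchanged (the hypothesis $V \in L^1$ is symmetric) produces a symmetric wave operator $V_+$ with $V_+^* V_+ = P_{ac}^\infty(H_1)$ and $V_+ V_+^* \leq P_{ac}^\infty(H)$, and the identity $W_+ V_+ = P_{ac}^\infty(H_1)$ (established exactly as in Theorem~\ref{thm5.2.2} via Lemma~\ref{lemma5.3.1}) gives the reverse inequality. Item (ii) follows from $W_+ e^{itH} W_+^* = e^{itH_1} P_{ac}^\infty(H_1)$ and Stone's theorem. The main obstacle is the passage from sot-convergence of $R_1 Y(t) R$ to that of $Y(t) R^2$, which hinges on Lemma~\ref{lemma5.3.1} to annihilate the remainder and thereby circumvents the absence of $L^1$-boundedness of $H_1 - H$.
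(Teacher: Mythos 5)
Your proof is correct and follows essentially the same route as the paper: the same identification operator $J=(H_1+i)^{-1}(H+i)^{-1}$ with $H_1J-JH$ equal to the $L^1$ resolvent difference, Theorem~\ref{thm5.0.3} for convergence of $e^{itH_1}Je^{-itH}P_{ac}^{\infty}(H)$, Lemma~\ref{lemma5.3.1} to discard the $L^1$ remainder, and a density/uniform-boundedness argument to recover $W_+$ (you pass through $\mathcal{D}(H^2)=R^2\mathcal{H}$ where the paper uses $\mathcal{D}(H)$ twice, an immaterial difference).
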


\begin{proof}
Firstly, we will show that
\[
W_{+}\triangleq sot\text{-}\lim_{t\rightarrow\infty}e^{ itH_{1}}e^{-itH}
P_{ac}^{\infty}(H) \ \ \text{ exists in } \ \mathcal{M}.
\]
In fact, we let $J=(H_{1}+i)^{-1}(H+i)^{-1} $ and $B= -(H_{1}+i)^{-1} +
(H+i)^{-1}$. Then $J\mathcal{D}(H)\subseteq\mathcal{D}(H_{1})$ and
\[
H_{1}J-JH= (H_{1}+i-i)J-J(H+i-i)= B\in\mathcal{M}\cap L^{1}(\mathcal{M}%
,\tau).
\]
By Theorem \ref{thm5.0.3} and the definition of $P_{ac}^{\infty}(H)$,
\begin{equation}
sot\text{-}\lim_{t\rightarrow\infty}e^{ itH_{1}}Je^{-itH} P_{ac}^{\infty}(H)
\ \ \text{ exists in } \ \mathcal{M}. \label{equa6.9}%
\end{equation}
Proposition \ref{prop6.2.3} implies that $(H+i)^{-1}$ commutes with
$P_{ac}^{\infty}(H)$. Combining with (\ref{equa6.9}), we conclude
\[
\lim_{t\rightarrow\infty}e^{ itH_{1}}(H_{1}+i)^{-1}e^{-itH} P_{ac}^{\infty}(H)x
\ \ \text{ exists in $\mathcal{H}$ for all $x\in\mathcal{D}(H)$} .
\]
Note that $\mathcal{D}(H)$ is dense in $\mathcal{H}$. We have
\begin{equation}
\lim_{t\rightarrow\infty}e^{ itH_{1}}(H_{1}+i)^{-1}e^{-itH} P_{ac}^{\infty}(H)x
\ \ \text{ exists in $\mathcal{H}$ for all $x\in\mathcal{H}$} .
\label{equa5.10}%
\end{equation}
Combining (\ref{equa5.10}), Lemma \ref{lemma5.3.1} with the fact that
$(H_{1}+i)^{-1}=(H+i)^{-1}-B$, we get
\begin{align}
\lim_{t\rightarrow\infty} e^{ itH_{1}}  &  (H+i)^{-1}e^{-itH} P_{ac}^{\infty
}(H)x\nonumber\\
&  =\lim_{t\rightarrow\infty}e^{ itH_{1}}\left(  (H_{1}+i)^{-1}+B\right)
e^{-itH} P_{ac}^{\infty}(H)x\nonumber\\
&  = \lim_{t\rightarrow\infty} e^{ itH_{1}}(H_{1}+i)^{-1}e^{-itH}
P_{ac}^{\infty}(H)x \ \ \text{ exists in $\mathcal{H}$ for all $x\in
\mathcal{H}$} . \label{equa5.11}%
\end{align}
As $(H+i)^{-1}$ commutes with $P_{ac}^{\infty}(H)$ and $\mathcal{D}(H)$ is
dense in $\mathcal{H}$, it follows from (\ref{equa5.11}) that
\[
\lim_{t\rightarrow\infty}e^{ itH_{1}} e^{-itH} P_{ac}^{\infty}(H)x \ \ \text{
exists in $\mathcal{H}$ for all $x\in\mathcal{H}$} ,
\]
i.e.
\[
W_{+}\triangleq sot\text{-}\lim_{t\rightarrow\infty}e^{ itH_{1}}e^{-itH}
P_{ac}^{\infty}(H) \ \ \text{ exists in } \ \mathcal{M}.
\]

Secondly, since the proof of $W_{+}^{*}W_{+}= P_{ac}^{\infty}(H) $, $W_{+}
W_{+}^{*}= P_{ac}^{\infty}(H_{1}) $ and $\displaystyle
W_+   H  W_+^*=     H_1 P_{ac}^{\infty}(H_{1})
$ is similar to the one in Theorem
\ref{thm5.2.2}, it is skipped.
\end{proof}

\begin{remark}
Similarly it can be shown that, { if $H$ and $H_{1}$ are densely defined,
self-adjoint elements in $\mathscr A(\mathcal{M})$ such that $(H_{1}+i)^{-1} -
(H+i)^{-1} \in\mathcal{M}\cap L^{1}(\mathcal{M},\tau), $ then
\[
W_{-}\triangleq sot\text{-}\lim_{t\rightarrow-\infty}e^{ itH_{1}}e^{-itH}
P_{ac}^{\infty}(H) \ \ \text{ exists in } \ \mathcal{M},
\]
and
\[
W_{-}^{*}W_{-}= P_{ac}^{\infty}(H)\le P_{ac} (H) \qquad\text{ and } \qquad
W_{-} W_{-}^{*}= P_{ac}^{\infty}(H_{1}) \le P_{ac} (H_{1}).
\]
}
\end{remark}

\section{Small Perturbation of Bounded Self-adjoint Operators}

Let $\mathcal{M}$ be a countably decomposable, properly infinite, semifinite
von Neumann algebra acting on a Hilbert space $\mathcal{H}$. Let $\tau$ be a
faithful normal semifinite tracial weight of $\mathcal{M}$.

\subsection{Voiculescu's constant}

Let $\mathcal{K}_{\Phi}(\mathcal{M},\tau)$ be a norm ideal of $\mathcal{M}$
(see Definition 2.1.1 in \cite{Li} for its definition). Recall $\mathcal{F}%
(\mathcal{M},\tau) $ is the set of finite rank operators in $(\mathcal{M}%
,\tau)$. Then $\mathcal{K}_{\Phi}^{0}(\mathcal{M},\tau)$ is the completion of
$\mathcal{F}(\mathcal{M},\tau) $ with respect to the norm $\Phi$ in
$\mathcal{K}_{\Phi}(\mathcal{M},\tau)$. We further let $\mathcal{K}_{\Phi}%
^{0}(\mathcal{M},\tau)_{1}^{+}$ be the unit ball of positive elements in
$\mathcal{K}_{\Phi}^{0}(\mathcal{M},\tau)$.

Following Voiculescu's Definition in \cite{Voi3.1}, we introduce the following concept.

\begin{definition}
\label{def6.1.1} Let $n\in\mathbb{N}$ and $X_{1},\ldots, X_{n}$ be an
$n$-tuple of elements in $\mathcal{M}$. We define
\[
{\mathscr K}_{\Phi}(X_{1},\ldots, X_{n}; \mathcal{M},\tau)=\liminf
_{A\in\mathcal{K}_{\Phi}^{0}(\mathcal{M},\tau)_{1}^{+}}\left(  \max_{1\le i\le
n}\Phi(AX_{i}-X_{i}A) \right)  ,
\]
where the $\liminf$'s are taken with respect to the natural orders on
$\mathcal{K}_{\Phi}^{0}(\mathcal{M},\tau)_{1}^{+}$.

When $\mathcal{K}_{\Phi}(\mathcal{M},\tau)$ is $\mathcal{M}\cap L^{p}%
(\mathcal{M},\tau)$ with
\[
\text{$\Phi(X)=\max\{\|X\|, \|X\|_{p}\}$, \qquad$\forall X\in\mathcal{M}$,}%
\]
for some $1\le p<\infty$, we will denote ${\mathscr K}_{\Phi}(X_{1},\ldots,
X_{n}; \mathcal{M},\tau)$, $\mathcal{K}_{\Phi}(\mathcal{M},\tau)$ by
${\mathscr K}_{p}(X_{1},\ldots, X_{n}; \mathcal{M},\tau)$, $\mathcal{K}%
_{p}(\mathcal{M},\tau)$ respectively.
\end{definition}

The next lemma is a direct consequence of preceding definition.

\begin{lemma}
\label{lemma6.1.2} Let $\mathcal{M}$ be a countably decomposable, properly
infinite von Neumann algebra with a faithful normal semifinite tracial weight
$\tau$. Let $\mathcal{K}_{\Phi}(\mathcal{M},\tau)$ be a norm ideal of
$\mathcal{M}$. Let $n\in\mathbb{N}$ and $X_{1},\ldots, X_{n}$ be an $n$-tuple
of elements in $\mathcal{M}$.

If, for any $\epsilon>0$, there exists a family of commuting diagonal
operators $D_{1},\ldots,D_{n}$ in $\mathcal{M}$ such that $\max_{1\le i\le
n}\Phi(X_{i}-D_{i})<\epsilon$, then ${\mathscr K}_{\Phi}(X_{1},\ldots,
X_{n})=0.$
\end{lemma}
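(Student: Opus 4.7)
My plan is to leverage the approximation hypothesis by constructing, for each $\epsilon>0$, explicit finite-trace test projections in a common diagonalizing subalgebra of $\mathcal{M}$ whose commutators with every $X_{i}$ are controlled by $O(\epsilon)$ in $\Phi$-norm, and then to drive the liminf in Definition \ref{def6.1.1} to zero.

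Given $\epsilon>0$, the hypothesis supplies commuting diagonal operators $D_{1},\ldots,D_{n}\in\mathcal{M}$ with $\max_{i}\Phi(X_{i}-D_{i})<\epsilon/2$. Because the $D_{i}$ pairwise commute, each spectral projection of $D_{i}$ commutes with every $D_{j}$, and successive pairwise meets of the spectral families individually diagonalizing each $D_{i}$ produce a common simultaneous diagonalization: a family $\{Q_{\beta}\}_{\beta\in B}$ of mutually orthogonal projections in $\mathcal{M}$ with $\tau(Q_{\beta})<\infty$, $\sum_{\beta}Q_{\beta}=I$, and scalars $\lambda_{i,\beta}$ such that $D_{i}=\sum_{\beta}\lambda_{i,\beta}Q_{\beta}$ for every $i$. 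For any finite $F\subseteq B$, the projection $P_{F}:=\sum_{\beta\in F}Q_{\beta}$ satisfies $\tau(P_{F})<\infty$, so $P_{F}\in\mathscr{F}(\mathcal{M},\tau)\subseteq\mathcal{K}_{\Phi}^{0}(\mathcal{M},\tau)_{1}^{+}$, and $P_{F}$ commutes with every $D_{i}$ by construction.

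The commutator identity $P_{F}X_{i}-X_{i}P_{F}=P_{F}(X_{i}-D_{i})-(X_{i}-D_{i})P_{F}$ together with the standard norm-ideal inequalities $\Phi(BT),\Phi(TB)\le\|B\|\,\Phi(T)$ for $B\in\mathcal{M}$ and $T\in\mathcal{K}_{\Phi}(\mathcal{M},\tau)$, combined with $\|P_{F}\|\le 1$, then yield
\[
\Phi(P_{F}X_{i}-X_{i}P_{F})\;\le\;2\,\Phi(X_{i}-D_{i})\;<\;\epsilon
\]
for each $i=1,\ldots,n$, providing explicit test operators with uniformly small commutators.

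To translate this pointwise estimate into $\mathscr{K}_{\Phi}(X_{1},\ldots,X_{n};\mathcal{M},\tau)=0$, the main obstacle is the cofinality required by the liminf definition: for any prescribed $A_{0}\in\mathcal{K}_{\Phi}^{0}(\mathcal{M},\tau)_{1}^{+}$ one needs $A\ge A_{0}$ in the same set with $\max_{i}\Phi(AX_{i}-X_{i}A)$ small, yet a generic $A_{0}$ is not supported in the abelian algebra generated by $\{Q_{\beta}\}$, so $P_{F}\ge A_{0}$ can fail directly. This is handled by applying the hypothesis at a refined scale and exploiting the freedom in the choice of commuting diagonal approximants in the countably decomposable, properly infinite, semifinite setting: first $\Phi$-approximate $A_{0}$ by a finite-rank positive contraction $A_{0}'$ with finite-trace support, then refine the simultaneous diagonalization $\{Q_{\beta}\}$ so that a sufficiently large $P_{F}$ dominates the support of $A_{0}'$, with the additional perturbation absorbed into the $O(\epsilon)$ commutator bound. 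Since $\epsilon>0$ was arbitrary, it follows that $\mathscr{K}_{\Phi}(X_{1},\ldots,X_{n};\mathcal{M},\tau)=0$.
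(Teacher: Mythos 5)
Your construction of the test projections is the right one, and it is essentially all the paper has in mind (the lemma is stated there without proof, as ``a direct consequence'' of Definition \ref{def6.1.1}): simultaneously diagonalize the commuting $D_{i}$ over a family $\{Q_{\beta}\}$ of mutually orthogonal finite-trace projections, put $P_{F}=\sum_{\beta\in F}Q_{\beta}$, and use $P_{F}X_{i}-X_{i}P_{F}=P_{F}(X_{i}-D_{i})-(X_{i}-D_{i})P_{F}$ to get $\Phi(P_{F}X_{i}-X_{i}P_{F})\le 2\Phi(X_{i}-D_{i})$. You are also right that the real content is the cofinality step, since $\mathscr K_{\Phi}=\sup_{A_{0}}\inf_{A\ge A_{0}}\max_{i}\Phi(AX_{i}-X_{i}A)$.

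Your resolution of that step, however, does not work. Refining $\{Q_{\beta}\}$ never produces a usable dominating projection: every $P_{F}$ built from a refinement is still majorized by a finite sum of the original $Q_{\beta}$, while the support of a general finite-rank positive contraction $A_{0}'$ need not lie under any such sum (already in $\mathcal B(\ell^{2})$ with $D_{1}=\sum_{n}n^{-1}e_{nn}$ the $Q_{\beta}$ are forced to be the minimal projections $e_{nn}$, and the rank-one projection onto $\sum_{n}2^{-n/2}e_{n}$ is under no finite sum of them); moreover the hypothesis gives you no freedom to choose the eigenprojections of the $D_{i}$ so as to accommodate $A_{0}$. The standard repair (Voiculescu's) is not to dominate $A_{0}$ by a projection but to take, with $S=(I-A_{0})^{1/2}$,
\[
A=A_{0}+SP_{F}S\;\ge\;A_{0},\qquad 0\le A\le I,\qquad A-A_{0}\in\mathscr F(\mathcal M,\tau),
\]
so $A\in\mathcal K_{\Phi}^{0}(\mathcal M,\tau)_{1}^{+}$, and to use the identity (with $C_{i}=[S,X_{i}]=[(I-A_{0})^{1/2}-I,\,X_{i}]$)
\[
[A,X_{i}]=S[P_{F},X_{i}]S+S(P_{F}-I)C_{i}+C_{i}(P_{F}-I)S,
\]
whence $\Phi([A,X_{i}])\le\Phi([P_{F},X_{i}])+\Phi((P_{F}-I)C_{i})+\Phi(C_{i}(P_{F}-I))$. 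One then sends $P_{F}\uparrow I$ and argues that the two error terms vanish because $C_{i}$ lies in $\mathcal K_{\Phi}^{0}(\mathcal M,\tau)$ and $P_{F}\to I$ strongly; this last point is exactly where approximation of $C_{i}$ by elements of $\mathscr F(\mathcal M,\tau)$ must be invoked, and it is the same unstated fact the paper relies on again in the proof of Proposition \ref{prop6.2.1} (that $\mathscr K_{\Phi}$ is bounded above by the liminf of the commutator norms along any net in $\mathcal K_{\Phi}^{0}(\mathcal M,\tau)_{1}^{+}$ converging strongly to $I$). A complete proof should establish or cite that statement rather than attempt to dominate $A_{0}$ inside the diagonalizing algebra.
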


\subsection{An Example}

If $\mathcal{M}_{1}$ is a von Neumann subalgebra of $\mathcal{M}$ such that
the restriction of $\tau$ to $\mathcal{M}_{1}$ is semifinite, then there
exists a faithful normal trace-preserving conditional expectation
$\mathcal{E}$ from $\mathcal{M}$ onto $\mathcal{M}_{1}$ (see Definition IX.4.1
of \cite{Takesaki}). For each $1\leq p<\infty$, the conditional expectation
$\mathcal{E}$ induces a contraction, still denoted by $\mathcal{E}$, from
$L^{p}(\mathcal{M},\tau)$ onto $L^{p}(\mathcal{M}_{1},\tau)$ satisfying
$\mathcal{E}(AXB)=A\mathcal{E}(X)B$ for all $A,B\in\mathcal{M}_{1}$ and $X\in
L^{p}(\mathcal{M},\tau)$ (see \cite{Pisier}).

\begin{proposition}
\label{prop6.2.1} Let $\mathcal{M}$ be a countably decomposable, properly
infinite von Neumann algebra with a faithful normal semifinite tracial weight
$\tau$. Suppose $\mathcal{M}_{1}$ is a von Neumann subalgebra of $\mathcal{M}$
such that the restriction of $\tau$ to $\mathcal{M}_{1}$ is semifinite. If
$X_{1},\ldots,X_{n}$ is an $n$-tuple of elements in $\mathcal{M}_{1}$, then
\[
{\mathscr K}_{p}(X_{1},\ldots,X_{n};\mathcal{M}_{1},\tau)={\mathscr K}%
_{p}(X_{1},\ldots,X_{n};\mathcal{M},\tau),\qquad\forall\ 1\leq p<\infty.
\]

\end{proposition}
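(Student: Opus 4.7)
The plan is to exploit the $\tau$-preserving normal conditional expectation $\mathcal{E}\colon\mathcal{M}\to\mathcal{M}_1$ produced by the semifiniteness of $\tau|_{\mathcal{M}_1}$, as recalled in the paragraph preceding the proposition. I will use the following features: $\mathcal{E}$ is positive, an $\mathcal{M}_1$-bimodule map, contractive in operator norm, and contractive as a map $L^p(\mathcal{M},\tau)\to L^p(\mathcal{M}_1,\tau)$, so $\Phi(\mathcal{E}(Y))\le\Phi(Y)$ throughout. The decisive identity, for $X_i\in\mathcal{M}_1$ and $A\in\mathcal{M}$, is $\mathcal{E}(AX_i-X_iA)=\mathcal{E}(A)X_i-X_i\mathcal{E}(A)$. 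I will also use that every positive element of $\mathcal{M}_1\cap L^p(\mathcal{M}_1,\tau)$ lies in $\mathcal{K}_p^0(\mathcal{M}_1,\tau)$, via truncation against spectral projections of finite trace.

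For the inequality ${\mathscr K}_p(X_1,\dots,X_n;\mathcal{M}_1,\tau)\le{\mathscr K}_p(X_1,\dots,X_n;\mathcal{M},\tau)$, fix $B_0\in\mathcal{K}_p^0(\mathcal{M}_1,\tau)_1^+$ and $\epsilon>0$. Since $B_0$ also lies in $\mathcal{K}_p^0(\mathcal{M},\tau)_1^+$, the defining $\liminf$ on $\mathcal{M}$ yields some $A\in\mathcal{K}_p^0(\mathcal{M},\tau)_1^+$ with $A\ge B_0$ and $\max_i\Phi(AX_i-X_iA)<{\mathscr K}_p(X_1,\dots,X_n;\mathcal{M},\tau)+\epsilon$. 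Setting $B:=\mathcal{E}(A)$ produces a positive contraction in $\mathcal{M}_1\cap L^p(\mathcal{M}_1,\tau)\subseteq\mathcal{K}_p^0(\mathcal{M}_1,\tau)_1^+$; the bimodule property yields $B\ge\mathcal{E}(B_0)=B_0$, and the decisive identity combined with $\Phi$-contractivity of $\mathcal{E}$ yields $\Phi(BX_i-X_iB)=\Phi(\mathcal{E}(AX_i-X_iA))\le\Phi(AX_i-X_iA)$. Taking the infimum over admissible $B$ and then the supremum over $B_0$ concludes this half.

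The reverse inequality calls for a lifting: given $A_0\in\mathcal{K}_p^0(\mathcal{M},\tau)_1^+$ and $\epsilon>0$, first apply the $\mathcal{M}_1$-definition to obtain $B\in\mathcal{K}_p^0(\mathcal{M}_1,\tau)_1^+$ with $B\ge\mathcal{E}(A_0)$ and $\max_i\Phi(BX_i-X_iB)<{\mathscr K}_p(X_1,\dots,X_n;\mathcal{M}_1,\tau)+\epsilon$, then construct $\tilde A\in\mathcal{K}_p^0(\mathcal{M},\tau)_1^+$ with $\tilde A\ge A_0$ and commutator bounds close to those of $B$. The candidate $\tilde A:=A_0+(1-A_0)^{1/2}B(1-A_0)^{1/2}$ automatically satisfies $0\le A_0\le\tilde A\le 1$ and lies in $\mathcal{K}_p^0(\mathcal{M},\tau)_1^+$. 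Expanding $\tilde AX_i-X_i\tilde A=[A_0,X_i]+[(1-A_0)^{1/2}B(1-A_0)^{1/2},X_i]$ reduces the estimate to controlling the cross-terms $[A_0,X_i]$ and $[(1-A_0)^{1/2},X_i]$.

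The main obstacle is precisely the control of these cross-terms: $A_0\notin\mathcal{M}_1$ in general, so its commutator with $X_i$ is not directly governed by ${\mathscr K}_p$ over $\mathcal{M}_1$. The intended remedy is to apply $\mathcal{E}$ once more to the cross-terms, so that their $\mathcal{M}_1$-pieces become $[\mathcal{E}(A_0),X_i]\le[B,X_i]$ (already bounded by hypothesis), and then absorb the kernel-of-$\mathcal{E}$ pieces by passing to a cofinal refinement of $A_0$ within $\mathcal{K}_p^0(\mathcal{M},\tau)_1^+$ that is better aligned with $\mathcal{M}_1$; concretely one replaces $A_0$ by $pA_0p$ for a suitable $\tau$-finite projection $p\in\mathcal{M}_1$ furnished by the semifiniteness of $\tau|_{\mathcal{M}_1}$, which forces the kernel-of-$\mathcal{E}$ contribution to vanish in the $\liminf$. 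This cofinality step is the delicate point of the proof.
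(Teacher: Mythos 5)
Your first inequality, ${\mathscr K}_{p}(X_{1},\ldots,X_{n};\mathcal{M}_{1},\tau)\le{\mathscr K}_{p}(X_{1},\ldots,X_{n};\mathcal{M},\tau)$, is proved correctly and by the same device the paper uses for its non-trivial direction: push a near-optimal $A\in\mathcal{K}_{p}^{0}(\mathcal{M},\tau)_{1}^{+}$ down to $\mathcal{E}(A)$ using the bimodule identity $\mathcal{E}(AX_{i}-X_{i}A)=\mathcal{E}(A)X_{i}-X_{i}\mathcal{E}(A)$ and the $\Phi$-contractivity of $\mathcal{E}$. Your version, which works directly with the $\sup$--$\inf$ form of the $\liminf$ and keeps track of the dominated element $B_{0}$, is if anything tidier than the paper's passage through an increasing sequence $A_{m}\nearrow I$.

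The gap is in your second half. The paper disposes of ${\mathscr K}_{p}(\mathcal{M},\tau)\le{\mathscr K}_{p}(\mathcal{M}_{1},\tau)$ in one line ("it is obvious"), the intended point being that every competitor for the $\mathcal{M}_{1}$-liminf already lies in $\mathcal{K}_{p}^{0}(\mathcal{M},\tau)_{1}^{+}$, so restricting to $\mathcal{M}_{1}$ can only make the commutators harder to kill; in Voiculescu's framework this is backed by the standard fact that the liminf is bounded above by $\liminf_{m}\max_{i}\Phi([B_{m},X_{i}])$ along any sequence of positive contractions $B_{m}$ in the ideal tending to $I$ strongly (the paper itself invokes exactly this in its last "by definition" step). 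You instead attempt an explicit lifting $\tilde A=A_{0}+(1-A_{0})^{1/2}B(1-A_{0})^{1/2}$ and then concede you cannot control the cross-terms; the remedies you sketch do not close the argument. Specifically: the inequality "$[\mathcal{E}(A_{0}),X_{i}]\le[B,X_{i}]$" is not meaningful (commutators of self-adjoint elements are not positive, and $B\ge\mathcal{E}(A_{0})$ gives no bound on $\Phi([\mathcal{E}(A_{0}),X_{i}])$ in terms of $\Phi([B,X_{i}])$); replacing $A_{0}$ by $pA_{0}p$ changes the element that must be dominated, so it cannot be fed back into the $\sup_{A_{0}}\inf_{A\ge A_{0}}$ structure; and the cofinality of $\mathcal{K}_{p}^{0}(\mathcal{M}_{1},\tau)_{1}^{+}$ in $\mathcal{K}_{p}^{0}(\mathcal{M},\tau)_{1}^{+}$ that your "cofinal refinement" implicitly needs is false in general --- already for $\mathcal{M}=\mathcal{B}(\mathcal{H})$, $\mathcal{M}_{1}$ the atomic masa and $p=1$, the rank-one projection onto a unit vector with coordinates proportional to $1/n$ is dominated by no positive trace-class diagonal contraction. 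So this half of your proposal is a description of an obstacle rather than a proof; the inequality should instead be obtained from the inclusion of test sets together with the approximate-unit characterization of the liminf, not from pointwise domination of an arbitrary $A_{0}$ by elements of $\mathcal{M}_{1}$.
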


\begin{proof}
Let $1\le p<\infty$. Let $\mathcal{K}_{p} (\mathcal{M},\tau)= \mathcal{M}\cap
L^{p}(\mathcal{M},\tau)$ be equipped with a $\max\{\|\cdot\|, \|\cdot\|_{p}%
\}$-norm. We should note that $\mathcal{K}_{p}^{0}(\mathcal{M},\tau)
=\mathcal{K}_{p}(\mathcal{M},\tau) $ in this case.

It is obvious that
\[
{\mathscr K}_{p}(X_{1},\ldots, X_{n}; \mathcal{M}_{1}, \tau) \ge
{\mathscr K}_{p}(X_{1},\ldots, X_{n}; \mathcal{M}, \tau).
\]
We need only to show that
\[
{\mathscr K}_{p}(X_{1},\ldots, X_{n}; \mathcal{M}_{1}, \tau) \le
{\mathscr K}_{p}(X_{1},\ldots, X_{n}; \mathcal{M}, \tau).
\]

Let $\epsilon>0$ be given. By definition, there exists an increasing sequence
$\{A_{m}\}_{m\in\mathbb{N}}$ in $\mathcal{K}_{p}(\mathcal{M},\tau)_{1}^{+}$
such that $A_{m}$ converges to $I$ in strong operator topology and
\[
\max\{ \|A_{m}X_{i}-X_{i}A_{m}\| , \|A_{m}X_{i}-X_{i}A_{m}\|_{p}\}<
{\mathscr K}_{p}(X_{1},\ldots, X_{n}; \mathcal{M}, \tau) +\epsilon,
\]
for all $1\le i\le n$ and $m\in\mathbb{N}$.

As the restriction of $\tau$ to $\mathcal{M}_{1}$ is semifinite, there exist a
faithful normal trace-preserving conditional expectation $\mathcal{E}$ from
$\mathcal{M}$ onto $\mathcal{M}_{1}$ and an induced contraction $\mathcal{E}$
from $L^{p}(\mathcal{M},\tau)$ onto $L^{p}(\mathcal{M}_{1},\tau)$. Therefore,
$\{\mathcal{E}(A_{m})\}_{m\in\mathbb{N}}$ is an increasing sequence in
$\mathcal{M}_{1}$ such that $\{\mathcal{E}(A_{m})\}_{m\in\mathbb{N}}$
converges to $I$ in weak operator topology  (so, in strong operator topology) and
\[
\begin{aligned}
{\mathscr K}_p(X_1,\ldots, X_n; \mathcal M, \tau) +\epsilon & > \max \{ \|\mathcal E(A_mX_i-X_iA_m)\| , \|\mathcal E(A_mX_i-X_iA_m)\|_p\} \\
&=  \max \{ \|\mathcal E(A_m)X_i-X_i\mathcal E(A_m)\| , \|\mathcal E(A_m) X_i-X_i\mathcal E(A_m)\|_p\}.\end{aligned}
\]
Note that $0\le\mathcal{E}(A_{m})\le I$ and $\|\mathcal{E}(A_{m})\|_{p}
\le\|A_{m}\|_{p}<\infty$. Thus, $\mathcal{E}(A_{m})\in\mathcal{K}%
_{p}(\mathcal{M},\tau)_{1}^{+}$. By definition, we have
\[
\begin{aligned}
{\mathscr K}_p(X_1,\ldots, X_n; \mathcal M_1, \tau ) &\le \liminf_m  \max_{1\le i\le n} \{ \|A_mX_i-X_iA_m\| , \|A_mX_i-X_iA_m\|_p\} \\&\le  {\mathscr K}_p(X_1,\ldots, X_n; \mathcal M, \tau) +\epsilon.\end{aligned}
\]
As $\epsilon$ is arbitrary, we have
\[
{\mathscr K}_{p}(X_{1},\ldots, X_{n}; \mathcal{M}_{1}, \tau) \le
{\mathscr K}_{p}(X_{1},\ldots, X_{n}; \mathcal{M}, \tau).
\]
This completes the proof of the proposition.
\end{proof}

\begin{example}\label{example6.2.2}
Let $\mathcal{N}$ be a finite von Neumann algebra with a faithful normal
tracial state $\tau_{\mathcal{N}}$ and let $\mathcal{H}_{0}$ be an infinite
dimensional separable Hilbert space. Then $\mathcal{M}=\mathcal{N}%
\otimes\mathcal{B}(\mathcal{H}_{0})$ is a semifinite von Neumann
algebra with a faithful normal tracial weight
$\tau_{\mathcal{M}}=\tau_{\mathcal{N}} \otimes Tr$, where $Tr$ is
the canonical trace of $\mathcal{B}(\mathcal{H}_{0})$. We might
further assume that $\mathcal{M}$ acts naturally on the Hilbert
space
$\mathcal{H}=L^{2}(\mathcal{N},\tau_{\mathcal{N}})\otimes\mathcal{H}_{0}$.
Obviously, $I_{\mathcal{N}}\otimes\mathcal{B}(\mathcal{H}_0)$ is a
von Neumann subalgebra of $\mathcal{M}$ such that the restriction of
$\tau_{\mathcal{M}}$ on
$I_{\mathcal{N}}\otimes\mathcal{B}(\mathcal{H}_0)$ is semifinite.

Let $n\geq2$ be a positive integer. By Proposition 4.1 in \cite{Voi}, there
exists an $n$-tuple $X_{1},\ldots,X_{n}$ of commuting self-adjoint elements in
$\mathcal{B}(\mathcal{H}_{0})$ such that
\[
k_{p}(X_{1},\ldots,X_{n})>0,\qquad\forall\ 1\leq p<n,
\]
where $k_{p}(X_{1},\ldots,X_{n})$ is a constant defined in section 1 of
\cite{Voi}. By Proposition 1.1 in \cite{Voi6} and Definition \ref{def6.1.1},
\[
{\mathscr K}_{p}(X_{1},\ldots,X_{n};\mathcal{B}(\mathcal{H}_{0}),Tr)={k}_{p}%
(X_{1},\ldots,X_{n}),\qquad\forall\ 1\leq p<n.
\]
By Proposition \ref{prop6.2.1},
\[
\begin{aligned}{\mathscr K}_p(I_{\mathcal N}\otimes X_1,\ldots, I_{\mathcal N}\otimes X_n; \mathcal M,\tau) &=
{\mathscr K}_p(I_{\mathcal N}\otimes X_1,\ldots, I_{\mathcal N}\otimes X_n; I_{\mathcal N}\otimes \mathcal B(\mathcal H_{0}),1\otimes Tr) \\
&= {\mathscr K}_p(X_1,\ldots, X_n; \mathcal B(\mathcal H_{0}),Tr) \\
& =k_p(X_1,\ldots, X_n)\\
&> 0, \qquad \forall \ 1 \le p<n.
\end{aligned}
\]
By Lemma \ref{lemma6.1.2}, $I_{\mathcal{N}}\otimes X_{1},\ldots,I_{\mathcal{N}%
}\otimes X_{n}$ is a family of commuting self-adjoint elements in
$\mathcal{M}$ that are not small perturbations of commuting diagonal operators
modulo $\mathcal{M}\cap L^{p}(\mathcal{M},\tau)$ for all $1\leq p<n$.
\end{example}

\vspace{1cm}


\begin{thebibliography}{99}                                                                                               %


\bibitem {Arendt}Wolfgang Arendt, Charles J. K. Batty, Matthias Hieber, Frank
Neubrander. Vector-valued Laplace Transforms and Cauchy Problems (Second
Edition), (Monographs in Mathematics 96), Birkhuser Basel (2011).


\bibitem {BV}Hari Bercovici and Dan Voiculescu. The analogue of Kuroda's
theorem for $n$-tuples. \emph{The Gohberg anniversary collection,} Vol. II
(Calgary, AB, 1988), 57--60, \emph{Oper. Theory Adv. Appl.}, 41,
Birkh\"{a}user, Basel, 1989.

\bibitem {Berg}David Berg. An extension of the Weyl-von Neumann theorem to
normal operators. \emph{Trans. Amer. Math. Soc.} \textbf{160} (1971), 365--371.










\bibitem {Dixmier}Jacques Dixmier. \emph{von Neumann algebras.} With a preface
by E. C. Lance. Translated from the second French edition by F. Jellett.
North-Holland Mathematical Library, 27. North-Holland Publishing Co.,
Amsterdam-New York, 1981.





\bibitem {Kadison}Richard Kadison and John Ringrose. \emph{Fundamentals of the
theory of operator algebras. Vol I. Elementaty Theory and Vol. II. Advanced
theory.} Corrected reprint of the 1986 original. Graduate Studies in
Mathematics, 15 and 16. American Mathematical Society, Providence, RI, 1997.



\bibitem {Kato}Tosio Kato. Perturbation of continuous spectra by trace class
operators. \emph{Proc. Japan Acad.} \textbf{33} (1957), 260--264.

\bibitem {Kato2}Tosio Kato. Wave operators and similarity for some
non-selfadjoint operators. \emph{Math. Ann.} \textbf{162} (1966), 258-279.

\bibitem {Kato3} Tosio Kato. Smooth operators and commutators. {\em Studia Mathematica.} T.XXXI. (1968), 535-546.
\bibitem {Kato4} Tosio Kato. Perturbation theory for linear operators. {\em Springer: Classics in mathematics seires, 2ed}, 1995 edition.

\bibitem {Kuroda}Shige Toshi Kuroda. On a theorem of Weyl-von Neumann.
\emph{Proc. Japan Acad.} \textbf{34} (1958), 11--15.

\bibitem {Li} Qihui Li, Junhao Shen, Rui Shi. A generalization of the Voiculescu theorem for normal
operators in semifinite von Neumann algebras, submitted.


\bibitem {Martin} Mircea Martin and Mihai Putinar. {\em Lectures on hyponormal operators.} (Operator Theory; Vol. 39) Basel; Boston; Berlin; Birkhauser, 1989.









\bibitem {Nelson}Edward Nelson. Notes on non-commutative integration. {\em J. Funct. Anal.} \textbf{15} (1974), 103--116.

\bibitem {Von2}Jon von Neumann. Charakterisierung des Spektrums eines
Integraloperators. \emph{Actualits Sci. Indust.} \textbf{229},
Hermann, Paris, 1935.








\bibitem {Pearson}D. B. Pearson. A generalization of Birman's trace theorem.
{\em J. Funct. Anal.} \textbf{ 28}, 182-186 (1978).

\bibitem {Pisier}Gilles Pisier and Quanhua Xu. Non-commutative $L^{p}$-spaces,
\emph{Handbook of the geometry of Banach spaces,} North-Holland, Amsterdam, 2
(2003), 1459--1517.

\bibitem{Putnam} C. R. Putnam. {\em Commutation properties of Hilbert space operators and related topics.}
Ergebnisse der Mathematik und ihrer Grenzgebiete, Band 36, Springer-Verlag Berlin Heidelberg New York 1967.

\bibitem {Rosenblum}Marvin Rosenblum. Perturbation of the continuous spectrum
and unitary equivalence. \emph{Pacific J. Math.} \textbf{7} (1957), 997--1010.

\bibitem {Siknia}William Sikonia. Essential, singular, and absolutely
continuous spectra. Thesis (Ph.D.)--University of Colorado at Boulder. 1970.

\bibitem {Takesaki}Masamichi Takesaki. \emph{Theory of operator algebras. I.
Reprint of the first (1979) edition.} Encyclopaedia of Mathematical Sciences,
124. Operator Algebras and Non-commutative Geometry, 5. Springer-Verlag,
Berlin, 2002.

\bibitem {Voi2}Dan Voiculescu. A non-commutative Weyl-von Neumann theorem.
\emph{Rev. Roumaine Math. Pures Appl.} \textbf{21} (1976), 97--113.

\bibitem {Voi}Dan Voiculescu. Some results on norm-ideal perturbations of
Hilbert space operators. \emph{J. Operator Theory} \textbf{2} (1979), 3--37.

\bibitem {Voi3}Dan Voiculescu. A note on quasitriangularity and trace-class
self-commutators. \emph{Acta Sci. Math. (Szeged)} \textbf{42} (1980), 195--199.

\bibitem {Voi3.1}Dan Voiculescu. Some results on norm-ideal perturbations of
Hilbert space operators. II. \emph{J. Operator Theory} \textbf{5} (1981), 77--100.

\bibitem {Voi4}Dan Voiculescu. Remarks on Hilbert--Schmidt perturbations of
almost normal operators. \emph{Topics in Modern Operator Theory, }
Birkh\"{a}user, 1981, 311--318.

\bibitem {Voi5}Dan Voiculescu. Hilbert space operators modulo normed ideals.
\emph{Proceedings of the International Congress of Mathematicians, } Vol. 1, 2
(Warsaw, 1983), 1041--1047, PWN, Warsaw, 1984.

\bibitem {Voi6}Dan Voiculescu. On the existence of quasicentral approximate
units relative to normed ideals. I. \emph{J. Funct. Anal.} \textbf{91} (1990), 1--36.

\bibitem {Voi7}Dan Voiculescu. Perturbations of operators, connections with
singular integrals, hyperbolicity and entropy. \emph{Harmonic Analysis and
Discrete Potential Theory (ed. M. A. Picardello)} Plenum Press, 1992, 181--191.

\bibitem {Voi8}Dan Voiculescu. Almost normal operators mod Hilbert-Schmidt and
the K-theory of the Banach algebras $E\Lambda(\Omega)$. \emph{J. Noncommut.
Geom.} \textbf{8} (2014), no. 4, 1123--1145.

\bibitem {Voi9}Dan Voiculescu. Some $C^{*}$-algebras which are coronas of
non-$C^{*}$-Banach algebras. \emph{J. Geom. Phys.} \textbf{105} (2016), 123--129.

\bibitem {Voigt} J. Voigt, Perturbation theory for commutative m-tuples of self-adjoint operators, {\em J. Funct. Anal.} 25(1977), 317¨C334.

\bibitem {Weyl}Hermann Weyl. {\"{U}ber beschr\"{a}nkte quadratische formen,
deren differenz vollstetig ist.} Rend. Circ. Mat. Palermo \textbf{27} (1)
(1909), 373--392.

\bibitem {Xia1} Jingbo Xia. Diagonalization modulo norm ideals with Lipschitz estimates. {\em J. Funct. Anal.} \textbf{145} (1997), 491--526.


\bibitem {Xia2} Jingbo Xia.  Diagonalization and unitary equivalence modulo Schatten p-classes. {\em J. Funct. Anal.} \textbf{175} (2000), 279--307.


\bibitem {Xia3} Jingbo Xia. Singular integral operators and norm ideals satisfying a quantitative variant of Kuroda's condition. {\em  J. Funct. Anal.} \textbf{228} (2005), 369--393.


\bibitem {Xia4} Jingbo Xia.   Diagonalization modulo a class of Orlicz ideals, {\em J. Funct. Anal.} \textbf{239} (2006), 268--296.


\bibitem {Xia5} Jingbo Xia.   A condition for diagonalization modulo arbitrary norm ideals. {\em J. Funct. Anal.} \textbf{255} (2008), 1039--1056.



\end{thebibliography}
\end{document}